\newtheorem{theorem}{Theorem}[section]
\newcommand{\MakeTheoremAndCounter}[2]{\newaliascnt{#1}{theorem}
\newtheorem{#1}[#1]{#2}
\aliascntresetthe{#1}
\expandafter\providecommand\csname#1autorefname\endcsname{#2}}
\theoremstyle{plain}
\newcommand{\declarerestatehelper}[2]{\theoremstyle{plain}\newtheorem*{#1}{\autoref{#2}}\expandafter\providecommand\csname#1autorefname\endcsname{\autoref*{#2}}}
\newcounter{restateidx}
\newenvironment{restate}[1]{\declarerestatehelper{autorestate\therestateidx}{#1} \begin{autorestate\therestateidx}}{\end{autorestate\therestateidx}\stepcounter{restateidx}}
\numberwithin{equation}{section}
\theoremstyle{definition}
\DeclareMathOperator{\pr}{pr}
\DeclareMathOperator{\diam}{diam}
\newcommand{\diaminfty}{\diam^\infty}
\DeclareMathOperator{\interior}{int}
\DeclareMathOperator{\length}{length}
\DeclareMathOperator{\lt}{lt}
\DeclareMathOperator{\LT}{LT}
\DeclareMathOperator{\slt}{slt}
\DeclareMathOperator{\refax}{refc}
\newcommand{\R}{\mathbb{R}}
\newcommand{\N}{\mathbb{N}}
\newcommand{\Z}{\mathbb{Z}}
\newcommand{\coords}[1]{(0, 0), (1, 0), (2, 1), (3, 0), (3, 2), (4, 3), (5, 3), (6, 3), (7, 3),#1 (8, 3), (9, 3), (9, 4), (9, 5), (8, 6), (8, 7), (8, 8), (7, 9), (9, 9),#1 (6, 0), (7, 0), (8, 0), (9, 0), (0, 9), (1, 9), (2, 8), (3, 7), (4, 7)}
\edef\plaincoords{\coords{}}
\newcounter{coordcount}
\newcommand{\squaregrid}[1][1]{
\draw[very thin, dotted] (0,0) grid[step=1] (#10,#10);
\node[anchor=east] at (0,0) {$(0,0)$};
\node[anchor=east] at (0,#10) {$(0,#1)$};
\node[anchor=west] at (#10,0) {$(#1,0)$};
\node[anchor=west] at (#10,#10) {$(#1,#1)$};
}
\title{Compact sets with large projections and nowhere dense sumset}
\author{Rich\'ard Balka}
\address{Alfr\'ed R\'enyi Institute of Mathematics, Re\'altanoda u.~13--15, H-1053 Budapest, Hungary}
\email{balka.richard@renyi.hu}
\author{M\'arton Elekes}
\address{Alfr\'ed R\'enyi Institute of Mathematics, Re\'altanoda u.~13--15, H-1053 Budapest, Hungary AND E\"otv\"os Lor\'and University, Institute of Mathematics, P\'azm\'any P\'eter s.~1/c, 1117 Budapest, Hungary}
\email{elekes.marton@renyi.hu}
\urladdr{http://www.renyi.hu/$\sim$emarci}
\author{Viktor Kiss}
\address{Alfr\'ed R\'enyi Institute of Mathematics, Re\'altanoda u.~13--15, H-1053 Budapest, Hungary}
\email{kiss.viktor@renyi.hu}
\author{Don\'at Nagy}
\address{E\"otv\"os Lor\'and University, Institute of Mathematics, P\'azm\'any P\'eter s.~1/c, 1117 Budapest, Hungary}
\email{m1nagdon@gmail.com}
\author{M\'ark Po\'or}
\address{E\"otv\"os Lor\'and University, Institute of Mathematics, P\'azm\'any P\'eter s.~1/c, 1117 Budapest, Hungary AND Einstein Institute of Mathematics, Edmond J.~Safra Campus, Givat Ram, The Hebrew University of Jerusalem, Jerusalem, 9190401, Israel}
\email{sokmark@gmail.com}
\thanks{The authors were supported by the National Research, Development and Innovation Office -- NKFIH, grants no.~113047, 129211 and 124749. The first author was supported by the MTA Premium Postdoctoral Research Program. The third author was supported by the National Research, Development and Innovation Office -- NKFIH, grant no.~128273.\\
\includegraphics[height=1cm]{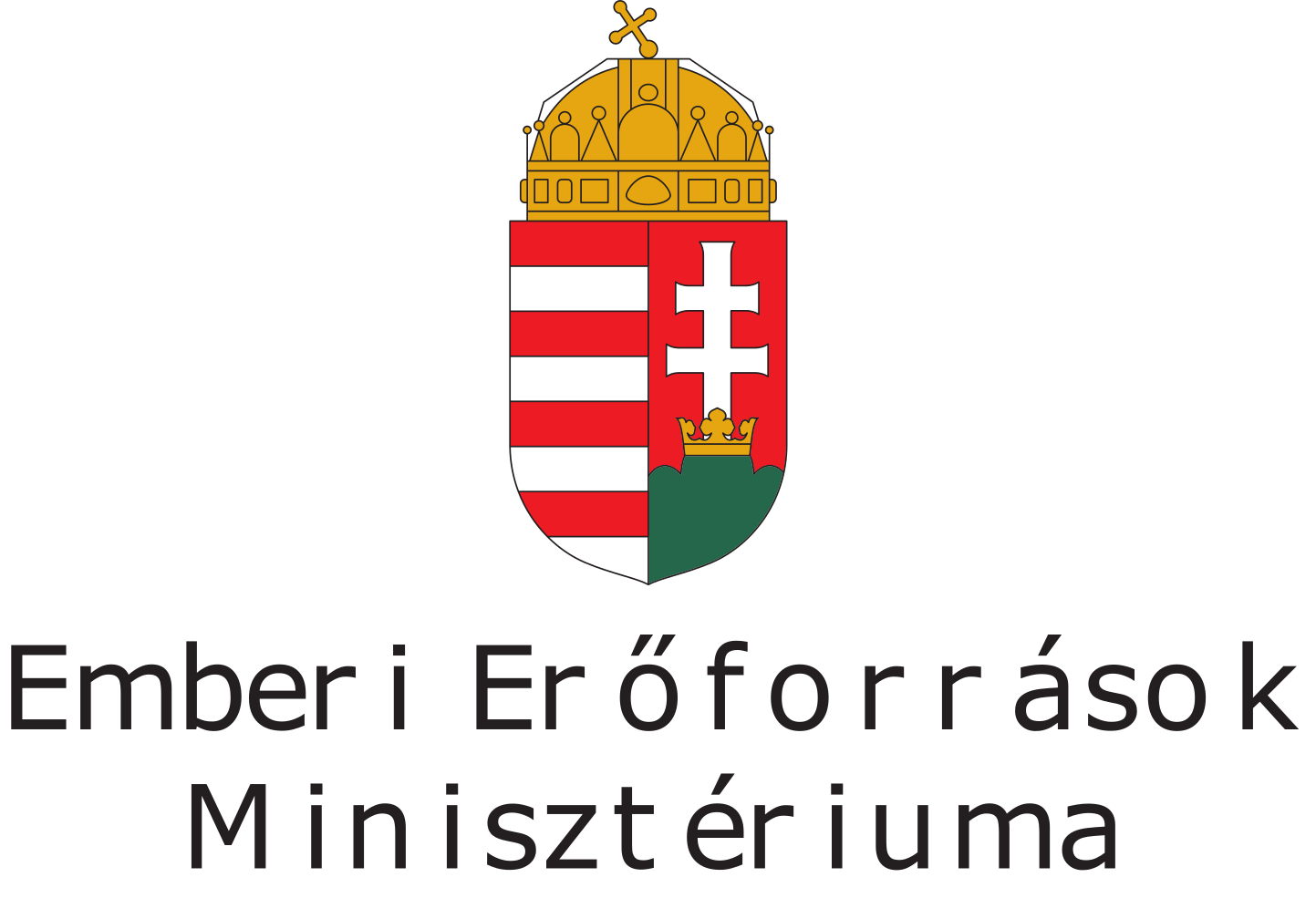} \raisebox{0.5cm}{\parbox[c]{11cm}{The fifth author was supported through the New National Excellence Program of the Ministry of Human Capacities.}}}
\subjclass[2010]{Primary 51F99; Secondary 54E52, 05D40}
\keywords{projection, sumset, additive combinatorics, random construction,
	generic, typical, self-similar}
\begin{document}

\begin{abstract}
We answer a question of Banakh, Jab\l{}o\'nska and Jab\l{}o\'nski by showing that for $d\ge 2$ there exists a compact set $K \subseteq \mathbb{R}^d$ such that the projection of $K$ onto each hyperplane is of non-empty interior, but $K+K$ is nowhere dense. The proof relies on a random construction.

A natural approach in the proofs is to construct such a $K$ in the unit cube with full projections, that is, such that the projections of $K$ agree with that of the unit cube. We investigate the generalization of these problems for projections onto various dimensional subspaces as well as for $\ell$-fold sumsets. We obtain numerous positive and negative results, but also leave open many interesting cases.

We also show that in most cases if we have a specific example of such a compact set then actually the generic (in the sense of Baire category) compact set in a suitably chosen space is also an example.

Finally, utilizing a computer-aided construction, we show that the compact set in the plane with full projections and nowhere dense sumset can be self-similar.
\end{abstract}

\maketitle

\tableofcontents

\section{Introduction}

Many intensively studied problems in additive combinatorics ask how large a set $K$ can be if its Minkowski sumset $K+K$ is small in some sense. Somewhat similarly, numerous classical problems and results in geometric measure theory relate the size (e.g.~Hausdorff measure or dimension) of a set $K$ to the size of its projections. Working on a seemingly unrelated problem, namely on the continuity of additive and convex functions, Banakh, Jab\l{}o\'nska and Jab\l{}o\'nski have arrived at a question that is an interesting combination of the two above mentioned themes. More precisely, they have asked the following, see \cite[Problem 2.7]{Banakh}.

\begin{question}[\textbf{Banakh--Jab\l{}o\'nska--Jab\l{}o\'nski}] 
Is there a compact set $K \subseteq \R^2$ such that the projection of $K$ on each line is of non-empty interior, but $K+K$ is nowhere dense?
\end{question}

Our first main result will be an answer to this question in arbitrary dimension. Recall that in $\R^d$ a hyperplane is a $(d-1)$-dimensional affine subspace.

%From now on, let $d$ always denote the dimension of the ambient space $\R^d$, and let us always assume $d \ge 2$. DONÁT KÉRTE EZT A MONDATOT, DE KELL EZ? NEM – TÖRÖLHETŐ

\begin{restate}{t:strong}
For every $d \ge 2$ there exists a compact set $K \subseteq [0,1]^d$ such that the projection of $K$ is of non-empty interior on every hyperplane, but $K+K$ is nowhere dense.
\end{restate}

The proof will be based on a random construction.

\bigskip
We will strengthen and generalize this result in various directions, which will naturally lead us to the following definition. For an affine subspace $V \subseteq \R^d$ let us denote the orthogonal projection on $V$ by $\pr_V$.

\begin{definition}
We say that a set $K \subseteq [0, 1]^d$ has \emph{full $m$-dimensional projections}, if $\pr_V (K) = \pr_V \left([0, 1]^d\right)$ for every $m$-dimensional affine subspace $V \subseteq \R^d$.
\end{definition}

The following fact is obvious.

\begin{fact}
\label{f:projections <-> intersections}
A set $K \subseteq [0, 1]^d$ has full $m$-dimensional projections iff $K$ intersects each $(d-m)$-dimensional affine subspace that meets $[0,1]^d$.
\end{fact}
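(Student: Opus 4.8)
The plan is to exploit the elementary duality between the $m$-dimensional affine subspaces onto which we project and the $(d-m)$-dimensional affine subspaces that arise as the fibers of those projections. The one observation to keep in mind is this: if $V = a + L$ is an $m$-dimensional affine subspace with linear direction $L$, then for $x,y \in \R^d$ we have $\pr_V(x) = \pr_V(y)$ if and only if $x - y \in L^\perp$; hence every nonempty fiber $\pr_V^{-1}(z)$ is a translate of $L^\perp$, i.e.\ a $(d-m)$-dimensional affine subspace, and conversely every $(d-m)$-dimensional affine subspace $W$ is such a fiber for a suitable $V$ (take $L = M^\perp$, where $M$ is the direction of $W$).

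For the forward implication I would assume $K$ has full $m$-dimensional projections and let $W$ be a $(d-m)$-dimensional affine subspace meeting $[0,1]^d$ in some point $p$. Writing $M$ for the direction of $W$ and putting $V = M^\perp$ (through the origin), the hypothesis gives $\pr_V(p) \in \pr_V([0,1]^d) = \pr_V(K)$, so there is $q \in K$ with $\pr_V(q) = \pr_V(p)$; by the equivalence above $q - p \in (M^\perp)^\perp = M$, that is $q \in p + M = W$. Thus $q \in K \cap W$, so $K$ meets $W$.

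For the converse I would assume $K$ meets every $(d-m)$-dimensional affine subspace that intersects $[0,1]^d$, and fix an $m$-dimensional affine subspace $V$ with direction $L$. The inclusion $\pr_V(K) \subseteq \pr_V([0,1]^d)$ is immediate from $K \subseteq [0,1]^d$. For the reverse inclusion take $z \in \pr_V([0,1]^d)$ and pick $x \in [0,1]^d$ with $\pr_V(x) = z$; the fiber $\pr_V^{-1}(z) = x + L^\perp$ is a $(d-m)$-dimensional affine subspace containing $x$, hence meeting $[0,1]^d$, so by assumption it contains some $q \in K$, and then $\pr_V(q) = \pr_V(x) = z$. This yields $\pr_V([0,1]^d) \subseteq \pr_V(K)$, hence equality.

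Since the whole argument is pure affine linear algebra, there is no real obstacle; the only points needing a little care are the bookkeeping of affine versus linear subspaces and of the dimension count $m \leftrightarrow d-m$, and the (automatic) verification that the subspaces produced actually meet $[0,1]^d$, which holds because in each direction we choose the relevant base point inside the cube.
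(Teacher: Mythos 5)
Your argument is correct: the duality between fibers of $\pr_V$ and translates of $L^\perp$ is exactly the right mechanism, and both directions are verified cleanly (including the point that the relevant fiber meets $[0,1]^d$ because it contains the chosen $x$). The paper states this as an obvious fact and offers no proof, so there is nothing to compare against; your write-up is simply the standard verification being omitted.
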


Clearly, in \autoref{t:strong} hyperplanes can be replaced by subspaces of smaller dimension, as well. However, as our next main result will show, the analogous results are not true for full projections (see Sections \ref{sec:full projections} and \ref{sec:negative}). The threshold for the dimension of subspaces will turn out to be $\frac{d}2$.

\begin{restate}{c:(2,k,d) <=> k >= d/2}
The following are equivalent:
\begin{enumerate}[label=\normalfont{(\roman*)}]
\item $k \ge \frac{d}2$,
\item there exists a compact set $K \subseteq [0, 1]^d$ such that $K$ intersects each $k$-dimensional affine subspace that meets $[0,1]^d$, but $K + K$ is nowhere dense.
\end{enumerate}
\end{restate}
The picture for $\ell$-fold sumsets instead of $K+K$ is much less complete. For brevity, let us introduce the following notation.

\begin{definition}
For $\ell, k, d \in \N$, $\ell \ge 2$, $k < d$, $d \ge 2$ we say that $K \subseteq \R^d$ is an \emph{$(\ell, k, d)$-set}, if $K$ is compact, $K$ intersects each $k$-dimensional affine subspace that meets $[0,1]^d$, but the $\ell$-fold sumset $\ell K = K+\dots +K$ ($\ell$ times) is nowhere dense.
\end{definition}

Then, for example, the second main result above states that there exists a $(2, k, d)$-set iff $k \ge \frac{d}2$.

\bigskip
From now on, throughout the paper, let $\ell, k, d$ be as in the above definition.
\bigskip

For $\ell$-fold sumsets we obtain numerous positive and negative partial results in Sections \ref{sec:multiple} and \ref{sec:negative}, for example we show that $\exists (d, k, d)$-set iff $\exists (d-1, k, d)$-set iff $k = d-1$, but the general case is left open.

We also show in Section \ref{sec:generic} that if $\exists (\ell, k, d)$-set then actually the generic (in the sense of Baire category) compact set in $\{K \subseteq [0, 1]^d : K \textrm{ is compact and } $  $ \textrm{ intersects each } k\textrm{-dimensional affine subspace that meets } [0,1]^d \}$ is also an $(\ell, k, d)$-set.

Then, by an explicit construction that was found with the aid of a computer search, we show in Section \ref{sec:self-similar} that there exists a self-similar $(2, 1, 2)$-set. This self-contained proof only works in the planar case, but it avoids both the probabilistic method and the combinatorial lemmas from Section \ref{sec:line traces}.

Finally, we mention some open problems in Section \ref{sec:open}.

\section{Preliminaries}

For a set $K \subseteq \R^d$ and a natural number $\ell \ge 2$, we denote the $\ell$-fold sumset as
\[\ell K = \{k_1 + \dots + k_l : k_1, \dots, k_l \in K\}.\]
We should caution the reader that the iterated sumset $\ell K$ is in general not the same as the dilate $\lambda\cdot K = \{\lambda x : x \in K\}$, where $\lambda\in\R$ is not necessarily an integer.

We will construct the sets $K$ with the desired properties by approximating them in gradually refined grids of (hyper)cubes. (Note that although we work in an arbitrary dimension $d\ge 2$, for the sake of brevity we use the word `cube' instead of `hypercube'.) The following notions allow us to refer to the grid cubes and their vertices.

Given $d \in \N$, $n \in \N \setminus \{0\}$ and $t_1, t_2, \dots, t_d \in \Z$, let $C^d_n(t_1, \dots, t_d)$ denote the cube
\[C^d_n(t_1,\ldots,t_d)=\left[\frac{t_1}{n}, \frac{t_1 + 1}{n}\right] \times \dots \times \left[\frac{t_d}{n}, \frac{t_d + 1}{n}\right].\]

We will subdivide the unit cube $[0, 1]^d$ into a grid of these small cubes and introduce the notation
\[\mathcal{C}_n^d = \left\{ C^d_n(t_1, \dots, t_d) : t_i \in \Z \text{ and } 0 \le t_i < n \text{ for all $i=1, \dots, d$}\right\}\]
for the set of these small cubes.

We call a point $\left(\frac{t_1}{n}, \dots, \frac{t_d}{n}\right)$ a \emph{grid point} if $t_i \in \Z$ and $0 \le t_i \le n$ for each $i \le d$. (That is, the grid points are the vertices of the cubes in $\mathcal{C}_n^d$.)

For a point $x\in\R^d$ we write $x_i$ to denote the $i$th coordinate of $x$.

In various parts of the proof, we will use the notation `$\pr$' to denote several kinds of projections. As we already mentioned in the introduction, if $V\subseteq \R^d$ is an affine subspace, then $\pr_V \colon \R^d\to V$ is the orthogonal projection onto $V$. With a slight abuse of notation, if $1 \le i\le d$ is an index, then we use $\pr_i \colon \R^d\to \R$ to denote the projection map $\pr_i(x)=x_i$ onto the $i$th coordinate. Moreover, in the proof of \autoref{cor:number of line traces} we will also use projection maps $\pr_{\overline{i}} \colon \mathcal{C}^d_n\to\mathcal{C}^{d-1}_n$ (where $1\le i\le d$ is an integer) that can be defined as
\[\pr_{\overline{i}}(C^d_n(t_1, \dots, t_{i-1}, t_i, t_{i+1}, \ldots, t_d))=C^{d-1}_n(t_1, \dots, t_{i-1}, t_{i+1}, \ldots, t_d).\]

\section{Line traces}\label{sec:line traces}

In a cube grid a line $L$ is represented by its trace: the grid cubes intersected by $L$. (Note that $L$ is an arbitrary line, we do not assume that it passes through any grid points.) 

\begin{definition}\label{d:associated line trace}
For a line $L \subseteq \R^d$ that intersects $[0, 1]^d$ and $n\ge 1$, the \emph{associated line trace} is $\lt_n(L) = \{C \in \mathcal{C}_n^d : C \cap L \neq \emptyset\}$.
\end{definition}

This representation will be useful in the random constructions described in \autoref{sec:non-empty interior} and \autoref{sec:full projections}, because each of the infinitely many lines intersecting $[0,1]^d$ is represented by one of the finitely many possible traces.

\begin{definition}\label{d:line trace}
We call a non-empty subset $S$ of $\mathcal{C}_n^d$ a \emph{line trace} if there is a line $L \subseteq \R^d$ such that $\lt_n(L) = S$. We denote the number of line traces in $\mathcal{C}^d_n$ by $\LT^{(d)}(n)$. 
\end{definition}

In the rest of this section we show that $\LT^{(d)}(n)$ grows polynomially in $n$ (for a fixed dimension $d$). 

\begin{lemma}
  \label{l:number of line traces 2d}
In two dimensions $\LT^{(2)}(n)\le (2n)^5$.
\end{lemma}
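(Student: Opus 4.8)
The plan is to parametrize lines in the plane and count how many distinct traces $\lt_n(L)$ can arise as $L$ varies. A line that meets $[0,1]^2$ is (with the trivial exception of vertical lines, which contribute at most $2n$ traces and can be absorbed into the bound) the graph $y = ax + b$ of an affine function; moreover its trace is determined by its restriction to $[0,1]^2$, so I may assume the line enters the unit square. The key observation is that the trace $\lt_n(L)$ changes, as we move the line continuously, exactly when the line passes through one of the grid points $(i/n, j/n)$ with $0 \le i, j \le n$. So I would consider the arrangement of lines in the parameter plane $(a,b)$ dual to these $(n+1)^2$ grid points: each grid point $(i/n, j/n)$ gives the locus $\{(a,b) : j/n = a \cdot i/n + b\}$, which is a line in $(a,b)$-space. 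These $(n+1)^2$ dual lines partition the parameter plane into faces, and on each face the combinatorial type of the line — in particular its trace — is constant.

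The main step is then the standard fact that an arrangement of $N$ lines in the plane has at most $1 + N + \binom{N}{2} = O(N^2)$ faces (vertices, edges, and two-dimensional regions combined). With $N = (n+1)^2$ this gives $O(n^4)$ parameter faces, hence $O(n^4)$ traces coming from non-vertical lines. Being slightly careful with constants: $N = (n+1)^2 \le (2n)^2 = 4n^2$ for $n \ge 1$, so the number of faces is at most $1 + 4n^2 + \binom{4n^2}{2} \le 1 + 4n^2 + 8n^4 \le 16n^4$ for $n\ge 1$; adding the at most $2n$ vertical-line traces and loosening the bound yields $\LT^{(2)}(n) \le (2n)^5$ comfortably. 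I would need to check that distinct traces really do correspond to distinct faces in the refined sense that I count vertices and edges of the arrangement too (since a line passing exactly through a grid point has a trace that may differ from those of nearby lines on either side, and such lines live on the dual-arrangement lines themselves, i.e. on edges/vertices), which is why the full face count $1+N+\binom N2$ rather than just the region count $1 + \binom{N}{2}$ is the right thing to use.

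The one genuine subtlety — the part I expect to be the main obstacle to making rigorous — is the precise claim that the trace is constant on each (relatively open) face of the arrangement and that it is \emph{determined} by the face. The "constant" direction is a compactness/continuity argument: if $L_t$ is a continuous path of lines none of which passes through a grid point, then no cube can be gained or lost from the trace, because a cube $C \in \mathcal{C}_n^2$ is in $\lt_n(L)$ iff $L$ separates ... more carefully, iff $\min$ and $\max$ of the affine functional defining $L$ over the four corners of $C$ straddle zero, and these inequalities are strict and hence stable as long as $L$ avoids all corners, i.e. all grid points. For the boundary faces one checks the closed versus open inequalities match the limiting traces. I would write this out as a short lemma: the map $L \mapsto \lt_n(L)$ factors through the face containing the dual point of $L$. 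Everything else is the textbook arrangement bound, so the write-up is short once this lemma is in place.
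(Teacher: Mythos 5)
Your duality argument is correct and is genuinely different from the paper's proof. The paper restricts first to lines of non-negative slope avoiding grid points, encodes the trace as a word in $\{U,R\}^m$ of length $m\le 2n-2$, invokes the fact that such words are balanced together with Lipatov's count of balanced words to get an $O(n^4)$ bound, and then separately handles lines through one or through at least two grid points. You instead dualize the $(n+1)^2$ grid points to lines in the $(a,b)$-parameter plane and observe that the trace of $y=ax+b$ is a function of the sign vector $\bigl(\operatorname{sign}(a\tfrac{i}{n}+b-\tfrac{j}{n})\bigr)_{i,j}$, hence of the cell of the arrangement containing $(a,b)$; this is sound, since a closed grid square meets $L$ iff the affine functional $ax+b-y$ takes both signs (weakly) on its four corners, and you correctly insist on counting cells of all dimensions so that lines through grid points are covered. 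Your approach is arguably cleaner: it avoids the balanced-word machinery and the external citation, and the key lemma (trace determined by the cell) is a two-line verification. One small correction: $1+N+\binom N2$ counts only the two-dimensional regions of an arrangement of $N$ lines; adding the at most $\binom N2$ vertices and at most $N^2$ edges gives a total of at most $2N^2+1$ cells. With $N=(n+1)^2\le 4n^2$ this is at most $32n^4+1$, which together with the at most $2n-1$ vertical traces still sits comfortably below $(2n)^5$ for $n\ge 2$, and $n=1$ is trivial since $\LT^{(2)}(1)=1$; so the slip costs you a factor of about $2$ in the constant but nothing in the conclusion.
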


\begin{proof}
  Fix $n \ge 1$. To count the number of line traces in $\mathcal{C}^2_n$, first look at those that can be obtained in the form of $\lt_n(L)$ where 
\begin{align}
\begin{split}
  \label{e:ell positive slope, not grid points}
  &\text{$L$ is a line that has a non-negative (possible infinite) slope,}\\ &\text{and it does not pass through any grid points.}
\end{split}
\end{align}
  The second condition implies that there is a natural enumeration of the squares in $\mathcal{C}^2_n$ that $L$ intersects. Let us enumerate these squares as $C_0, C_1, \dots, C_m$, where $C_0$ contains the bottom left endpoint of the segment $[0,1]^2\cap L$ and for each $i < m$, the cubes $C_i$ and $C_{i + 1}$ share a common side.
  
  Using that the slope of $L$ is non-negative, for each $i < m$, one has to go either up or to the right from $C_i$ to get $C_{i + 1}$. Let us record the sequence of moves as a sequence $s \in \{U, R\}^m$, where $U$ represents a move up, and $R$ represents a move to the right. It is well-known (see e.g.~\cite{Balanced words}) that $s$ is \emph{balanced}, that is, for each $m' \le m$, if we take two subsequences of $s$, both consisting of consecutive elements and both having length $m'$, then the number of $U$s in the two sequences are either equal, or differ by $1$. 
  
  Since the number of $U$s and the number of $R$s in the sequence $s$ are both at most $n - 1$, we obtain that $m \le 2n - 2$. So to get all line traces $\lt_n(L)$ where $L$ satisfies \eqref{e:ell positive slope, not grid points}, one has to pick the square where $L$ enters the unit square and then check all balanced sequences. Using that the slope is non-negative, $L$ has to enter the unit square on the bottom or on the left side. This gives us $2n - 1$ many options for the first grid square it intersects. The number of balanced words of length $2n - 2$ is $1 + \sum_{i = 1}^{2n - 2} (2n - 1 - i) \varphi(i)$, where $\varphi$ is Euler's totient function (this was established by Lipatov \cite{Lipatov}, see also \cite{balanced}). Then the number of line traces coming from a line satisfying \eqref{e:ell positive slope, not grid points} is at most 
\begin{align*}
  (2n - 1)\left(1 + \sum_{i = 1}^{2n - 2} (2n - 1 - i) \varphi(i) \right)\le (2n - 1)\left(1 + \sum_{i = 1}^{2n - 2} (2n - 2) i \right) \\ = (2n - 1)\left(1 + \frac{1}{2}(2n - 1)(2n - 2)^2\right) \le \frac{1}{2}(2n - 1)^4,
\end{align*}
  for $n \ge 2$. If we allow $L$ to have negative slope then the number of line traces we get will be at most twice the previous bound, hence at most $(2n - 1)^4$. Notice that this bound now works for $n = 1$ as well. 
  
  The only thing that remains to count every line trace of $\mathcal{C}^d_n$, is to count those $\lt_n(L)$ that come from a line $L$ passing through grid points. We only have finitely many lines $L$ that passes through at least two grid points inside the unit square, their number is at most $(n + 1)^4$, since the number of grid points in the unit square is $(n + 1)^2$. 
  
  Now suppose that $L$ passes through exactly one grid point $p$. It is easy to see that we can move $L$ to another line $L'$ such that $L'$ does not pass through any grid points, moreover, $L'$ intersects exactly those grid squares that $L$ intersects, except maybe for one grid square that contains $p$. In particular, $\lt_n(L') \subseteq \lt_n(L)$ and $|\lt_n(L')| \ge |\lt_n(L)| - 1$.

Hence, to get all line traces $\lt_n(L)$ where $L$ passes through exactly one grid point, it is enough to consider the line traces of the form $\lt_n(L')$ with $L'$ not passing through any grid points, and add a grid square that has a common side with one of the squares in $\lt_n(L')$. If the absolute value of the slope of $L'$ is at most $1$ then each adjacent square is above or below a square in $\lt_n(L')$, hence the number of adjacent squares is at most $2n$.

An analogous argument works if the absolute value of the slope is at least $1$, hence the number of line traces in $\mathcal{C}^2_n$ is at most 
\begin{align*}
    LT^{(2)}(n) &\le (2n + 1)(2n - 1)^4 + (n + 1)^4 = (2n)^5 - 47n^4 + 20n^3 + 14n^2 - 2n + 2 \\ &\le (2n)^5
\end{align*}
for $n \ge 1$, proving the lemma.
\end{proof}

Recall that for an index $1\le i \le d$, $\pr_{\overline{i}} \colon \mathcal{C}^d_n \to \mathcal{C}^{d-1}_n$ denotes the projection onto the hyperplane orthogonal to the $i$th coordinate axis.

\begin{claim}
\label{c:projections of traces}
  The projections $\{\pr_{\overline{i}}(\lt_n(L))\}_{1\le i\le d}$ determine $\lt_n(L)$, that is, if the lines $L, L' \subseteq \R^d$ satisfy $\pr_{\overline{i}}(\lt_n(L)) = \pr_{\overline{i}}(\lt_n(L'))$ for each $1 \le i \le d$, then $\lt_n(L) = \lt_n(L')$. 
\end{claim}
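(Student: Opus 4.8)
The plan is to establish the two inclusions $\lt_n(L)\subseteq\lt_n(L')$ and $\lt_n(L')\subseteq\lt_n(L)$ separately; since the hypothesis is symmetric in $L$ and $L'$, it is enough to prove the first one. So I would fix a cube $C=C^d_n(t_1,\dots,t_d)\in\lt_n(L)$ and aim to show that $C\in\lt_n(L')$.

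The first step is to encode membership in a line trace through intervals of parameters. Fix a parametrization $L'=\{a+\lambda v:\lambda\in\R\}$, and for $1\le j\le d$ and $s\in\Z$ set
\[
  J_j(s)=\left\{\lambda\in\R:\ a_j+\lambda v_j\in\left[\frac{s}{n},\frac{s+1}{n}\right]\right\},
\]
a closed interval of the real line, possibly empty and possibly unbounded (the latter precisely when $v_j=0$). Since a point $a+\lambda v$ lies in $C^d_n(s_1,\dots,s_d)$ exactly when $\lambda\in\bigcap_{j=1}^{d}J_j(s_j)$, one has, for every $C^d_n(s_1,\dots,s_d)\in\mathcal{C}^d_n$,
\[
  C^d_n(s_1,\dots,s_d)\in\lt_n(L')\quad\Longleftrightarrow\quad\bigcap_{j=1}^{d}J_j(s_j)\neq\emptyset .
\]
Thus it suffices to prove that $\bigcap_{j=1}^{d}J_j(t_j)\neq\emptyset$.

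Next I would feed in the hypothesis. Since $C\in\lt_n(L)$, for every $i$ we have $\pr_{\overline i}(C)\in\pr_{\overline i}(\lt_n(L))=\pr_{\overline i}(\lt_n(L'))$, so there is a cube $C^{(i)}\in\lt_n(L')$ that agrees with $C$ in all coordinates except possibly the $i$th one. Applying the equivalence above to $C^{(i)}$ gives $J_i(r)\cap\bigcap_{j\neq i}J_j(t_j)\neq\emptyset$ for some integer $r$, and in particular $\bigcap_{j\neq i}J_j(t_j)\neq\emptyset$, for each $i$. Note this also shows that no $J_j(t_j)$ is empty (otherwise $\bigcap_{k\neq i}J_k(t_k)$ would be empty for any $i\neq j$), which disposes of the degenerate directions with $v_j=0$.

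Finally comes the combinatorial core: the $d$ intervals $J_1(t_1),\dots,J_d(t_d)$ on the real line have the property that deleting any single one of them leaves a subfamily with non-empty intersection, and we want all $d$ to intersect. When $d\ge 3$ this is immediate: given $a\neq b$, choose $c\notin\{a,b\}$ (possible since $d\ge 3$), so that $\emptyset\neq\bigcap_{j\neq c}J_j(t_j)\subseteq J_a(t_a)\cap J_b(t_b)$; hence the intervals intersect pairwise, and a finite family of pairwise intersecting intervals always has a common point (the largest of their left endpoints does not exceed the smallest of their right endpoints). Therefore $\bigcap_{j=1}^{d}J_j(t_j)\neq\emptyset$, so $C\in\lt_n(L')$, which finishes the argument. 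I do not expect any real obstacle here; the one point deserving care is that this last step genuinely uses $d\ge 3$ --- for $d=2$ the statement is in fact false (for example the lines $y=x-\tfrac1{10}$ and $y=x+\tfrac1{10}$ have the same projection traces but different traces in $\mathcal{C}^2_2$), which is not an issue for the intended application, where the planar case is handled separately by \autoref{l:number of line traces 2d}.
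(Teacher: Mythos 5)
Your proof is correct and follows essentially the same route as the paper's: both reduce the problem to a one-dimensional intersection statement along the line and finish with a convexity/Helly argument --- the paper picks the left endpoints of the intervals $C^i\cap L$, orders them, and shows the second-smallest lies in $C$, which is exactly your ``every $(d-1)$ of the $d$ intervals meet, hence all $d$ meet'' step in a hands-on form. Your remark that the statement fails for $d=2$ is correct (your counterexample checks out) and points to a harmless imprecision in the paper: the paper's own argument also silently requires $d\ge 3$ (it needs the point $p^{i_3}$ to exist), and \autoref{c:projections of traces} is only invoked for $d>2$ in the proof of \autoref{cor:number of line traces}.
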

\begin{proof}
  Suppose we have two line traces $\lt_n(L)$ and $\lt_n(L')$ with $\pr_{\overline{i}}(\lt_n(L)) = \pr_{\overline{i}}(\lt_n(L'))$ for all $i \le d$, we need to show that $\lt_n(L) = \lt_n(L')$. By symmetry, it is enough to show that if $C = C^d_n(t_1, \dots, t_d) \in \lt_n(L')$ then $C \in \lt_n(L)$. Since the projections of $\lt_n(L)$ and $\lt_n(L')$ are the same, for each $i \le d$ we can find a cube $C^i \in \lt_n(L)$ with $\pr_{\overline{i}}(C^i) = \pr_{\overline{i}}(C)$, that is, $C^i = C^d_n(t_1, \dots, t_{i - 1}, b_i, t_{i+1}, \dots, t_d)$ for some $b_i$.
  
  The intersections of the form $C^i \cap L$ are compact convex subsets of $L$. After identifying $L$ with $\R$, we can view these intersections as closed bounded intervals, and denote the left endpoint of $C^i \cap L$ as $p^i$. We can also order the left endpoints in a non-decreasing order as $p^{i_1}, p^{i_2}, \dots, p^{i_d}$. It is enough to show that $p^{i_2} \in C$, since then $L \cap C \neq \emptyset$ and therefore $C \in \lt_n(L)$. 
  
  Notice that each $1 \le r \le d$ satisfies that $p^{i_r}_j \in [\frac{t_j}{n}, \frac{t_j + 1}{n}]$ for each $j \neq i_r$, because $p^{i_r} \in C^{i_r}$. In particular, $p^{i_2}_j \in [\frac{t_j}{n}, \frac{t_j + 1}{n}]$ for each $j \neq i_2$, and $p^{i_1}_{i_2}, p^{i_3}_{i_2} \in [\frac{t_{i_2}}{n}, \frac{t_{i_2} + 1}{n}]$. However, $p^{i_2}$ is a convex combination of the points $p^{i_1}$ and $p^{i_3}$, therefore $p^{i_2}_{i_2} \in [\frac{t_{i_2}}{n}, \frac{t_{i_2} + 1}{n}]$. It follows that $p^{i_2}$ is indeed in $C$, completing the proof.
\end{proof}

\begin{corollary}
  \label{cor:number of line traces}
  For $d \ge 2$ and $n \ge 1$, 
  $$\LT^{(d)}(n) \le (2n)^{\frac{5}{2}d!} .$$
\end{corollary}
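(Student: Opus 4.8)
The plan is to induct on $d$, using \autoref{l:number of line traces 2d} for the base case $d=2$ (note $(2n)^{5}=(2n)^{\frac52\cdot 2!}$) and \autoref{c:projections of traces} for the inductive step. So assume $d\ge 3$ and $\LT^{(d-1)}(n)\le (2n)^{\frac52(d-1)!}$. By \autoref{c:projections of traces} the map $\lt_n(L)\mapsto\bigl(\pr_{\overline{1}}(\lt_n(L)),\dots,\pr_{\overline{d}}(\lt_n(L))\bigr)$ is injective from the line traces of $\mathcal{C}^d_n$ into $d$-tuples of subsets of $\mathcal{C}^{d-1}_n$, whence
\[
\LT^{(d)}(n)\ \le\ \prod_{i=1}^{d}N_i,\qquad N_i:=\bigl|\{\pr_{\overline{i}}(\lt_n(L)):L\text{ a line meeting }[0,1]^d\}\bigr|.
\]
Thus it suffices to show $N_i\le\LT^{(d-1)}(n)$ for every $i$, since this yields $\LT^{(d)}(n)\le\bigl(\LT^{(d-1)}(n)\bigr)^{d}\le(2n)^{\frac52 d!}$.

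To bound $N_i$, write $\pi_i\colon\R^d\to\R^{d-1}$ for the projection forgetting the $i$th coordinate; a direct check shows that $\pr_{\overline{i}}(\lt_n(L))$ is precisely the set of cubes of $\mathcal{C}^{d-1}_n$ meeting the image segment $\pi_i\bigl(L\cap[0,1]^d\bigr)$. When $L$ is not parallel to the $i$th axis this segment lies on the line $\pi_i(L)$, and I would try to argue that it sweeps out all of $\pi_i(L)\cap[0,1]^{d-1}$, so that $\pr_{\overline{i}}(\lt_n(L))=\lt_n\bigl(\pi_i(L)\bigr)$ is a genuine line trace in $\mathcal{C}^{d-1}_n$ and hence $N_i\le\LT^{(d-1)}(n)$; the degenerate cases (where $\pi_i(L)$ collapses to a point, or where $L$ passes through grid points so that $\lt_n(L)$ is not simply ordered) would be absorbed with a little extra bookkeeping, as in the grid-point analysis in the proof of \autoref{l:number of line traces 2d}.

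The step I expect to be the main obstacle is exactly the assertion that $\pr_{\overline{i}}(\lt_n(L))$ is a line trace: the $i$th coordinate of $L$ may leave $[0,1]$ strictly before the other coordinates do, so $\pi_i\bigl(L\cap[0,1]^d\bigr)$ can be a proper sub-segment of $\pi_i(L)\cap[0,1]^{d-1}$, and its trace is then only a connected portion of $\lt_n(\pi_i(L))$ rather than the whole of it. Closing this gap seems to require an additional idea — for instance, that such a partial trace can always be realized as the trace of a segment which is itself the full intersection of some line with $[0,1]^{d-1}$ (so that partial traces are no more numerous than honest line traces), or else that one should not bound the $N_i$ in isolation but exploit that the $d$ shadows $\pi_1(L),\dots,\pi_d(L)$ of a single line are tightly linked (already two of them determine $L$), which is what should pin the number of admissible $d$-tuples down to $\bigl(\LT^{(d-1)}(n)\bigr)^{d}$.
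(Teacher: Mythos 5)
Your proposal follows the paper's own route exactly — induction on $d$ with \autoref{l:number of line traces 2d} as the base case and \autoref{c:projections of traces} giving $\LT^{(d)}(n)\le\prod_{i=1}^d N_i$ — and the ``main obstacle'' you flag at the end is a genuine gap, not a mere technicality: as written, your argument (and, it must be said, the paper's one-line proof, which silently asserts $N_i\le \LT^{(d-1)}(n)$) does not establish the claimed bound. Indeed $\pr_{\overline i}(\lt_n(L))$ is the set of cubes of $\mathcal{C}^{d-1}_n$ meeting the projection of the \emph{segment} $L\cap[0,1]^d$ (forgetting the $i$th coordinate), and this projected segment can be a proper sub-segment of the projected line intersected with $[0,1]^{d-1}$; such a partial trace need not be a line trace. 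Concretely, for $d=3$, $n=10$ and $L$ the line through $(0.55,0.55,0)$ and $(0.551,0.551,1)$, one gets $\pr_{\overline 3}(\lt_{10}(L))=\{C^2_{10}(5,5)\}$, a single square interior to $[0,1]^2$, which is not the trace of any line (a line meeting that square must cross its boundary and hence meet a neighbouring square). Since every honest line trace of $\mathcal{C}^{d-1}_n$ also occurs as a projection (take $L$ inside a hyperplane $x_i=c$), this shows $N_i>\LT^{(d-1)}(n)$ in general, so the inequality $\prod_i N_i\le(\LT^{(d-1)}(n))^d$ cannot be obtained this way; your first proposed repair (every partial trace is realized by a full line) fails for the same singleton example.

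The gap can be closed, but at the cost of the exponent. For a line $L'$ in $\R^{d-1}$ and a sub-segment $S\subseteq L'\cap[0,1]^{d-1}$, each $C\cap L'$ with $C\in\lt_n(L')$ is a closed subinterval $[\alpha_C,\beta_C]$ of $L'$, and $C$ meets $S=[\sigma,\tau]$ iff $\alpha_C\le\tau$ and $\sigma\le\beta_C$; hence the partial trace is determined by $\lt_n(L')$ together with the positions of $\sigma$ and $\tau$ relative to the finitely many values $\alpha_C,\beta_C$, of which there are at most $|\lt_n(L')|\le dn$ each. This gives $N_i\le \LT^{(d-1)}(n)\cdot O(d^2n^2)$, and the induction then yields $\LT^{(d)}(n)\le (Cn)^{c_d}$ for constants depending only on $d$ — still polynomial in $n$, which is all that \autoref{strong intersection inheritance} and the probabilistic constructions downstream actually use — but the specific exponent $\frac52 d!$ of the statement is not recovered by this patched argument and would have to be either enlarged or re-derived by a more careful count.
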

\begin{proof}
  The two-dimensional case was handled in \autoref{l:number of line traces 2d}.  Now let $d > 2$, and assume that $L$ is a line in $\R^d$ that intersects the unit cube. 

  Using \autoref{c:projections of traces}, the number of line traces in $\mathcal{C}^d_n$ is at most 
  $$
    \LT^{(d)}(n) \le (\LT^{(d-1)}(n))^{d} \le (2n)^{5d\frac{(d-1)!}{2}} = (2n)^{5\frac{d!}{2}},
  $$
  proving the corollary by induction. 
\end{proof}

\section{Strong intersections}

The idea behind our construction is that if a line $L$ passes through a grid cube $C$ and we subdivide $C$ into smaller cubes, then $L$ `usually' intersects many of these smaller cubes and we may discard a large portion of the smaller cubes.

Unfortunately there are exceptions: for example if $|L\cap C|=1$ (e.g.~$L$ passes through a vertex of $C$), then $L$ cannot intersect more than one small cube from the subdivision. To rule out cases like this, we will only consider \emph{strong intersections} where the segment $L\cap C$ is large enough compared to the side length of the cube $C$. To simplify some calculations, we will quantify this `large enough' with the $\ell^\infty$ 
(i.e.~maximum) norm:

\begin{definition}\label{strong intersection def}
Assume that $L$ is a line and $C$ is a cube with axis-parallel edges in $\R^d$. We say that $L$ \emph{strongly intersects} $C$ when
\[\diaminfty(L\cap C) \ge \frac{\diaminfty(C)}{2d},\]
where $\diaminfty$ denotes the diameter in the $\ell^\infty$-norm.
\end{definition}

Recall that if $S\subseteq \R^d$ is a convex compact set (e.g.~a line segment) and $\pr_i$ denotes the projection onto the $i$th coordinate axis, then $\pr_i(S)$ is clearly a segment for each $1 \le i\le d$ and 
\[\diaminfty(S)=\max_{1\le i\le d} \length(\pr_i(S))\]
is just the length of the longest possible projection of $S$ onto a coordinate axis. In particular, if $C$ is a cube with axis-parallel edges, then $\diaminfty(C)$ is simply the side length of $C$.

It is easy to verify that \autoref{strong intersection def} implies the following fact:
\begin{fact}\label{strong intersection invariance}
Strong intersection is invariant under translations and scaling. In other words, if $\phi \colon \R^d\to \R^d$ is defined as $\phi(x) = \lambda x + t$ for some $\lambda>0$ and $t\in\R^d$, then $L$ strongly intersects $C$ if and only of $\phi(L)$ strongly intersects $\phi(C)$.
\end{fact}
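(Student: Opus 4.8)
The plan is to observe that $\phi$ is an affine bijection of $\R^d$ that scales every $\ell^\infty$-distance by the single positive factor $\lambda$, and that both sides of the inequality in \autoref{strong intersection def} are built only out of $\diaminfty$ and the set $L\cap C$; hence passing to $\phi$ just multiplies both sides by $\lambda$, leaving the inequality intact.

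First I would record the elementary properties of $\phi$. Since $\phi(x)=\lambda x+t$ with $\lambda>0$, the map $\phi$ is a bijection, it sends lines to lines, and it sends a cube with axis-parallel edges to a cube with axis-parallel edges, because each coordinate interval $[a_i,b_i]$ is carried to $[\lambda a_i+t_i,\,\lambda b_i+t_i]$. In particular $\phi(L)$ is again a line and $\phi(C)$ is again an axis-parallel cube, so the phrase ``$\phi(L)$ strongly intersects $\phi(C)$'' makes sense. Because $\phi$ is injective we also have $\phi(L\cap C)=\phi(L)\cap\phi(C)$.

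Next I would compute the effect of $\phi$ on $\diaminfty$. For any two points $x,y\in\R^d$ one has $\|\phi(x)-\phi(y)\|_\infty=\lambda\|x-y\|_\infty$, so taking the supremum over $x,y\in S$ gives $\diaminfty(\phi(S))=\lambda\,\diaminfty(S)$ for every bounded set $S$. Applying this with $S=L\cap C$ and with $S=C$ yields
\[\diaminfty\bigl(\phi(L)\cap\phi(C)\bigr)=\lambda\,\diaminfty(L\cap C)\quad\text{and}\quad\diaminfty(\phi(C))=\lambda\,\diaminfty(C).\]
Since $\lambda>0$, we may cancel $\lambda$, and therefore the inequality $\diaminfty(L\cap C)\ge\frac{\diaminfty(C)}{2d}$ holds if and only if $\diaminfty(\phi(L)\cap\phi(C))\ge\frac{\diaminfty(\phi(C))}{2d}$, which is precisely the assertion of the fact.

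I do not expect any real obstacle: the only points needing (routine) verification are that $\phi$ preserves the class of axis-parallel cubes and commutes with intersection, and that the hypothesis $\lambda>0$ is exactly what lets us divide out $\lambda$ without reversing the inequality.
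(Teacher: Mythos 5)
Your proof is correct and is exactly the routine verification the paper has in mind (the paper states the fact without proof, remarking only that it is easy to check from \autoref{strong intersection def}). The key points — that $\phi$ preserves axis-parallel cubes, that $\phi(L\cap C)=\phi(L)\cap\phi(C)$ by injectivity, and that $\diaminfty$ scales by $\lambda>0$ on both sides of the defining inequality — are all present and correctly justified.
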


Let us introduce the notation
\[\mathcal{L} = \{L: \text{ L is a line that strongly intersects $[0,1]^d$}\}.\]

Analogously to \autoref{d:associated line trace}, we can introduce the following notion:
\begin{definition}\label{d:associated strong line trace}
For a line $L\in \mathcal{L}$ that strongly intersects $[0,1]^d$, the \emph{associated strong line trace} is
\[\slt_n(L)=\{C \in \mathcal{C}^d_n : L\text{ strongly intersects }C\}.\]
\end{definition}

The main idea behind the notion of strong intersection is the following claim:
\begin{claim}\label{strong intersection inheritance naive}
If $L\in\mathcal{L}$, then $|\slt_n(L)|\ge \frac{n}{2d}-2$.
\end{claim}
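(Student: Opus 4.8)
The plan is to show that a line $L$ which strongly intersects $[0,1]^d$ must strongly intersect proportionally many cubes in $\mathcal{C}^d_n$, simply because its longest coordinate projection is a segment of length at least $\frac1{2d}$, and this long segment cannot be ``hidden'' in few cubes. First I would fix an index $i$ for which $\length(\pr_i(L\cap[0,1]^d)) = \diaminfty(L\cap[0,1]^d) \ge \frac1{2d}$ (using the description of $\diaminfty$ in terms of coordinate projections recalled just before \autoref{strong intersection invariance}, and that $\diaminfty([0,1]^d) = 1$). After intersecting with $[0,1]^d$ we may assume $L\cap[0,1]^d$ is a segment $I$ whose projection onto the $i$th axis has length $\ge \frac1{2d}$.

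Next I would slice the $i$th axis by the grid hyperplanes $x_i = \frac{t}{n}$. The projection $\pr_i(I)$ is an interval of length $\ge\frac1{2d}$, so it fully contains at least $\lfloor \frac{n}{2d}\rfloor - 1 \ge \frac{n}{2d} - 2$ of the elementary intervals $[\frac{t}{n},\frac{t+1}{n}]$. For each such ``full'' elementary interval $[\frac{t}{n},\frac{t+1}{n}]$, consider the portion $I_t$ of the segment $I$ that lies in the slab $\frac{t}{n}\le x_i\le\frac{t+1}{n}$; this portion is itself a subsegment of $L$ whose $i$th projection has length exactly $\frac1n$, hence $\diaminfty(I_t) \ge \frac1n$. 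Now $I_t$ is contained in the union of the cubes of $\mathcal{C}^d_n$ that it meets; since these cubes all share the same $i$th coordinate block $[\frac{t}{n},\frac{t+1}{n}]$, they are distinct for distinct $t$. It remains to check that $L$ strongly intersects at least one cube meeting $I_t$: if $I_t$ met the cubes $C^{(1)},\dots,C^{(r)}$, then $\diaminfty(I_t) \le \sum_{j} \diaminfty(I_t\cap C^{(j)})$ by convexity (the segment passes through them consecutively), so some piece has $\diaminfty(I_t\cap C^{(j)}) \ge \frac1{rn}$; but a cleaner route is to note that $I_t$ has $\diaminfty \ge \frac1n = \diaminfty(C)$ for any $C\in\mathcal C^d_n$, and a segment of $\ell^\infty$-diameter at least the side length of the grid cubes must strongly intersect one of the cubes it passes through — indeed among the cubes $C^{(1)},\dots,C^{(r)}$ met by $I_t$ in order, the one containing the midpoint of a longest coordinate projection of $I_t$ will contain a sub-piece of $\ell^\infty$-diameter at least $\frac{1}{2}\cdot\frac{1}{n} \geq \frac{\diaminfty(C)}{2d}$ for $d\ge 1$.

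Assembling this: distinct full elementary intervals $[\frac tn,\frac{t+1}{n}]\subseteq\pr_i(I)$ give rise to pairwise distinct cubes in $\slt_n(L)$ (they differ already in their $i$th coordinate block), and there are at least $\frac{n}{2d}-2$ such intervals, so $|\slt_n(L)| \ge \frac{n}{2d} - 2$.

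The main obstacle is the last step — verifying that within a slab the segment genuinely \emph{strongly} intersects (not merely intersects) one of the grid cubes. The cleanest fix is to extract, inside each slab, a sub-piece of $L$ whose longest coordinate projection has length at least $\frac1n$ and which lies in a single cube; if the coordinate realizing the diameter of $I_t$ is again $i$, then $I_t$ lies in a single cube and we are done, while if it is some $j\ne i$, one may further subdivide along the $j$th axis and iterate, but since each slab along a new axis shrinks the relevant projection by a factor of at most $\frac1n$ in that direction only, after one more split we get a sub-segment lying in one cube with $\ell^\infty$-diameter $\ge\frac1{2n}\ge\frac{\diaminfty(C)}{2d}$. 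One must be slightly careful that the ``full interval'' counting is done along a fixed axis $i$ so that the resulting cubes stay distinct; this is why the argument isolates $i$ at the very start. Everything else is elementary interval arithmetic.
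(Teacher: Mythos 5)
Your overall strategy is the right one and is essentially the paper's: the paper deduces this claim from \autoref{strong intersection inheritance}, whose property (ii) is proved by exactly your slicing argument (fix the coordinate $i$ realizing $\diaminfty(L\cap[0,1]^d)$, count the at least $\frac{n}{2d}-2$ elementary intervals fully contained in $\pr_i(L\cap[0,1]^d)$, and produce one strongly intersected cube per slab, these cubes being distinct because their $i$-th coordinate blocks differ). The gap is in the one step that genuinely needs care, namely that inside each slab the line \emph{strongly} intersects some cube. Your ``cleaner route'' is false for $d\ge 3$: the cube containing the point of $I_t$ lying above the midpoint of $\pr_i(I_t)$ need not contain a long piece of $I_t$. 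For instance, in $\R^3$ the segment $I_t$ can cross a grid hyperplane $x_2=\frac{s_2}{n}$ at $i$-th coordinate $\frac{t}{n}+\frac{1}{2n}-\delta$ and a grid hyperplane $x_3=\frac{s_3}{n}$ at $i$-th coordinate $\frac{t}{n}+\frac{1}{2n}+\delta$; then the cube containing that midpoint meets $I_t$ in a piece of $\ell^\infty$-diameter about $2\delta$, which is arbitrarily small. Your proposed repair in the closing paragraph has the same defect: it is not true that $I_t$ lies in a single cube when the diameter of $I_t$ is realized by the slab coordinate $i$ (it can still cross grid hyperplanes orthogonal to the other axes while staying in the slab), and ``one more split'' does not suffice in general, since up to $d-1$ of the other families of grid hyperplanes can each cut $I_t$ once.

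The correct repair is the paper's count of partitioning points. Since $\length(\pr_j(I_t))\le\diaminfty(I_t)=\frac1n$ for every $j$, each family of grid hyperplanes orthogonal to the $j$-th axis with $j\ne i$ meets the relative interior of $I_t$ in at most one point (unless $L$ lies inside one such hyperplane, in which case that family cuts $I_t$ nowhere). Hence $I_t$ is cut into at most $d$ subsegments, each contained in a single cube of $\mathcal{C}^d_n$. Their $i$-th projections cover an interval of length $\frac1n$, so one subsegment $S$ satisfies
\[
\diaminfty(S)\ \ge\ \frac{1}{dn}\ \ge\ \frac{1}{2dn}\ =\ \frac{\diaminfty(C)}{2d},
\]
i.e.\ the cube containing $S$ is strongly intersected by $L$ in the sense of \autoref{strong intersection def}. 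With this substitution your argument goes through; the counting of full elementary intervals and the distinctness of the resulting cubes across different slabs are fine as you wrote them.
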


For the random construction described in \autoref{sec:non-empty interior} we would also need to show that the number of traces is small enough, i.e.~polynomial in $n$ (for a fixed $d$). Instead of directly proving this for the map $\slt_n \colon \mathcal{L} \to \mathcal{P}(\mathcal{C}^d_n)$, we will prove \autoref{strong intersection inheritance} which is sufficient for our goals (and immediately implies \autoref{strong intersection inheritance naive}).

\begin{lemma}\label{strong intersection inheritance}
For each positive integer $n$, there exists a map $\slt'_n \colon \mathcal{L} \to \mathcal{P}(\mathcal{C}^d_n)$ with the following properties:
\begin{enumerate}[label=\normalfont{(\roman*)}]
\item if $L\in \mathcal{L}$, then $\slt'_n(L) \subseteq \slt_n(L)$ is a set of cubes strongly intersecting $L$,
\item for any $L\in\mathcal{L}$, $\slt'_n(L)$ is `large enough': there exists a coordinate $1\le r \le d$ such that 
\[|\{\pr_r(C): C\in \slt'_n(L)\}|\ge \frac{n}{2d}-2,\]% (in particular, $|\slt'_n(L)|\ge \frac{n}{2d}-2$), 
\item the range of the map $\slt'_n$ is small: 
\[|\{\slt'_n(L) : L\in \mathcal{L}\}| \le d (8dn)^{\frac{5}{2}d!}.\]
\end{enumerate}
\end{lemma}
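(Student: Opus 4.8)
The plan is to build $\slt'_n(L)$ essentially out of the strong line trace, but reduce its information content along each coordinate so that its range becomes small enough to estimate via the line trace bounds of \autoref{cor:number of line traces}. The key idea is the same one used in \autoref{c:projections of traces} and \autoref{cor:number of line traces}: one can encode a $d$-dimensional trace-like object by its $d$ coordinate-hyperplane projections, each of which is morally a $(d-1)$-dimensional line trace, and then invoke the polynomial bound on $\LT^{(d-1)}(n)$. First I would recall \autoref{strong intersection inheritance naive}: since $L\in\mathcal{L}$, we have $|\slt_n(L)|\ge \frac{n}{2d}-2$, and by the pigeonhole principle applied to the $d$ coordinate projections, there is a coordinate $r=r(L)$ with $|\{\pr_r(C):C\in\slt_n(L)\}|\ge (\frac{n}{2d}-2)^{1/1}$ — actually we need a bit more care: the cubes strongly intersected form a "connected chain" so after possibly passing to a subset we get that along \emph{every} coordinate the projection is an interval, and along at least one coordinate $r$ the projection has length at least $\frac{n}{2d}-2$; this will be property (ii).

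For the construction of $\slt'_n$ itself, I would proceed as in the proof of \autoref{cor:number of line traces}: define $\slt'_n(L)$ so that it is determined by the tuple of projections $(\pr_{\overline{i}}(\slt'_n(L)))_{1\le i\le d}$, each of which should be (a subset of) a $(d-1)$-dimensional strong line trace or ordinary line trace. Concretely, I expect the cleanest route is: let $\slt'_n(L)=\slt_n(L)$ when $d=2$ (using \autoref{l:number of line traces 2d} to bound the range, noting $\slt_n(L)\subseteq\lt_n(L)$ so there are at most $\LT^{(2)}(n)\le (2n)^5$ of them, and the factor $d=2$ and the $8d$ slack absorb constants), and for $d>2$ define $\slt'_n(L)$ recursively: project $L$ to each coordinate hyperplane $H_i$, obtaining a line (or lower-dimensional affine set) $L_i$ in $\R^{d-1}$; if $L_i$ still strongly intersects $[0,1]^{d-1}$ apply $\slt'_{n}$ in dimension $d-1$ to it, otherwise take the whole ($(d-1)$-dimensional) trace or a canonical small set; then let $\slt'_n(L)$ be the set of cubes $C\in\slt_n(L)$ whose every projection $\pr_{\overline{i}}(C)$ lies in the $i$th chosen $(d-1)$-dimensional set. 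By \autoref{c:projections of traces}-type reasoning this is well-defined and still consists of cubes strongly intersecting $L$ (property (i)), and its range is bounded by $\prod_{i=1}^d |\{\text{range in dim } d-1\}| \cdot (\text{number of choices of } r)$, giving at most $d\cdot ((d-1)(8(d-1)n)^{\frac52(d-1)!})^d \le d(8dn)^{\frac52 d!}$ after checking the exponent arithmetic $d\cdot \frac52(d-1)! = \frac52 d!$ and that the base and the $d$-powers of constants fit inside the $8d$ and leading $d$ — property (iii).

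The main obstacle I anticipate is property (ii): I must guarantee that \emph{after} the recursive pruning, $\slt'_n(L)$ is still large along some coordinate. Pruning cube-by-cube via projections could in principle delete almost everything, so I would instead argue contrapositively — show that the pruning deletes at most a controlled number of cubes along the "long" coordinate $r$. The point is that along coordinate $r$, where $|\{\pr_r(C):C\in\slt_n(L)\}|$ is large, the projection of $L$ to the hyperplane $H_i$ (for $i\ne r$) still captures this length, so the recursive call in dimension $d-1$ can be arranged to preserve a long projection in the coordinate corresponding to $r$; one sets up the induction hypothesis so that the "preserved long coordinate" in dimension $d-1$ pulls back to coordinate $r$ in dimension $d$. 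Making this bookkeeping precise — and handling the degenerate cases where $L_i$ fails to strongly intersect or even collapses to a point — is where the real work lies; the range and inclusion bounds are then routine. An alternative, possibly simpler, approach avoiding the recursion: directly take $\slt'_n(L)$ to be the set of all $C\in\slt_n(L)$, and bound $|\{\slt_n(L):L\in\mathcal{L}\}|$ by observing $\slt_n(L)\subseteq \lt_n(L)$ and that $\slt_n(L)$ is determined by $\lt_n(L)$ together with a bounded amount of extra data (which endpoints are "strong"); if that extra data is genuinely bounded, \autoref{cor:number of line traces} immediately yields (iii) with room to spare, and (ii) follows from \autoref{strong intersection inheritance naive} directly — I would try this cleaner route first and fall back on the recursion only if the "extra data" turns out to be unbounded.
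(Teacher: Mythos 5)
There is a genuine gap: neither of your two routes closes the central difficulty, and the key idea of the paper's proof is absent from both. Your ``cleaner'' fallback --- taking $\slt'_n(L)=\slt_n(L)$ and arguing that $\slt_n(L)$ is determined by $\lt_n(L)$ plus a bounded amount of extra data such as ``which endpoints are strong'' --- does not work. It is not only the endpoints of the trace that can fail to be strongly intersected: already in the plane, a line of slope close to $1$ clips the corners of roughly half the squares in its trace, and which of these interior squares meet the strength threshold $\diaminfty(L\cap C)\ge\frac{1}{2d}\diaminfty(C)$ varies with the exact position of $L$; the ``extra data'' is an essentially arbitrary subset of the trace, so bounding the range of $\slt_n$ would require a separate (and nontrivial) argument. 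Moreover, even granting (iii) for this choice, property (ii) would not follow: you invoke \autoref{strong intersection inheritance naive}, but in the paper that claim is a \emph{consequence} of this lemma, so using it here is circular, and in any case it only bounds the cardinality $|\slt_n(L)|$, whereas (ii) demands many \emph{distinct projections onto a single coordinate} $r$ --- up to $2d$ strongly intersected cubes can share one such projection, so the cardinality bound loses a factor you cannot afford. Your recursive route has a further structural problem beyond the bookkeeping you acknowledge deferring: strong intersection is defined via the $\ell^\infty$-diameter, which is not preserved by coordinate projections (the diameter may be achieved precisely in the coordinate you project away, and the threshold $\frac{1}{2d}$ changes to $\frac{1}{2(d-1)}$), so ``cubes whose projections lie in the lower-dimensional strong trace'' need not strongly intersect $L$, and property (i) is also at risk.

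The idea you are missing is to decouple the definition of $\slt'_n(L)$ from the exact geometry of $L$ by routing it through an \emph{ordinary} line trace on a finer auxiliary grid. The paper fixes the reference coordinate $r=\refax(L)$ (the coordinate realizing $\diaminfty$ on segments of $L$) and declares $C\in\mathcal{C}^d_n$ to be in $\slt'_n(L)$ iff the fine trace $\lt_{4dn}(L)$ contains cubes inside $C$ with at least $4$ distinct projections onto coordinate $r$. Then (iii) is immediate because the construction factors through the pair $(\refax(L),\lt_{4dn}(L))$, giving at most $d\cdot\LT^{(d)}(4dn)\le d(8dn)^{\frac52 d!}$ values; (i) follows because $4$ distinct fine projections force $\length(\pr_r(L\cap C))\ge\frac{2}{4dn}=\frac{\diaminfty(C)}{2d}$; and (ii) follows by showing that over each of the $\ge\frac{n}{2d}-2$ full coarse intervals in $\pr_r(L\cap[0,1]^d)$, the corresponding open sub-segment of $L$ has at most $d-1$ ``partitioning points'' and hence contains a sub-segment of $\diaminfty$ at least $\frac{1}{dn}$ inside a single coarse cube, which must then be covered by at least $4$ fine cubes with distinct $r$-projections. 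You would need to supply this (or an equivalent) mechanism to make either of your routes rigorous.
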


\begin{proof}
For a segment $S$ in $\R^d$, let us define the associated \emph{reference coordinate} as
\[\refax(S)=\min \{i : 1\le i \le d, \diaminfty(S) = \length(\pr_i(S))\}.\]
It is clear that if $L$ is a line in $\R^d$ and $S_1, S_2\subseteq L$ are segments of positive length, then $\refax(S_1)=\refax(S_2)$. Using this, we can define the \emph{reference coordinate} of a line $L$ in $\R^d$ as
\[\refax(L)=\refax(S),\text{ where $S\subseteq L$ is an arbitrary segment of positive length}.\]

Fix an integer $n\ge 1$ and a line $L\in \mathcal{L}$ that strongly intersects $[0,1]^d$. We will define the set $\slt'_n(L)$ of grid cubes that will satisfy properties (i), (ii) and (iii).

In this proof we will work with two cube grids: our goal is to select grid cubes of the \emph{coarse} grid $\mathcal{C}^d_n$ that are strongly intersected by $L$, but we will also study the line trace of $L$ in the \emph{fine} grid $\mathcal{C}^d_{4dn}$. Each cube $C\in \mathcal{C}^d_n$ from the coarse grid can be written as the union of $(4d)^d$ cubes from the fine grid; we will denote the set of these cubes by
\[[C]=\{C' \in \mathcal{C}^d_{4dn} : C'\subseteq C\}.\]

Now we are ready to define $\slt'_n(L)$ as the set of cubes $C\in \mathcal{C}^d_n$ that satisfy
\begin{equation} \label{eq:sltprime def}
\{ \pr_{\refax (L)}(C^*): C^*\in [C]\cap \lt_{4dn}(L)\}|\ge 4.
\end{equation}

This formula only depends on $L$ through the index $1 \le \refax(L)\le d$ and the line trace $\lt_{4dn}(L)$, therefore the map $\slt'_n$ may take at most $d\LT^{(d)}(4dn)$ many different values. This implies that property (iii) holds, because according to \autoref{cor:number of line traces}, $\LT^{(d)}(4dn)\le (8dn)^{\frac52 d!}$.

After this, we prove that property (i) is satisfied, that is, \eqref{eq:sltprime def} implies that $L$ strongly intersects $C$.

For a fine grid cube $C^* = C^d_{4dn}(t_1, \ldots, t_d)\in \mathcal{C}^d_{4dn}$ it is clear from the definitions that $\pr_{\refax(L)}(C^*) = \left[\frac{t_{\refax(L)}}{4dn}, \frac{t_{\refax(L)}+1}{4dn}\right]$ is determined by the integer $t_{\refax(L)}$. Therefore, \eqref{eq:sltprime def} implies that there are (fine) grid cubes
\[C^d_{4d n}(s_1, \ldots, s_d), C^d_{4dn}(t_1, \ldots, t_d)\in [C]\cap \lt_{4dn}(L)\]
such that $s_{\refax(L)}-t_{\refax(L)}\ge 3$. Pick points
\[p\in C^d_{4dn}(s_1, \ldots, s_d)\cap L\qquad\text{and}\qquad q\in C^d_{4dn}(t_1, \ldots, t_d)\cap L.\]
Clearly $p, q\in L\cap C$ and we can estimate the relevant coordinates of $p$ and $q$ as
\[p_{\refax(L)}\ge \frac{s_{\refax(L)}}{4dn}\quad\text{and}\quad\frac{t_{\refax(L)}+1}{4d n} \ge q_{\refax(L)}.\]
These show that
\[\diaminfty(L\cap C)=\length(\pr_{\refax(L)}(L\cap C))\ge p_{\refax(L)}-q_{\refax(L)} \ge \frac{2}{4dn} =\frac{\diaminfty(C)}{2d}\]
and therefore $L$ strongly intersects $C$.

It remains to show that property (ii) is satisfied. As $L$ strongly intersects $[0,1]^d$, we know that
\[\length(\pr_{\refax(L)}(L\cap[0,1]^d)) = \diaminfty(L\cap[0,1]^d)\ge \frac{\diaminfty([0,1]^d)}{2d}=\frac{1}{2d}.\]
Using this, it is easy to verify that $|B|\ge \frac{n}{2d}-2$ for the set
\[B=\left\{b\in\N : 0\le b <n, \left[\frac{b}{n}, \frac{b+1}{n}\right]\subseteq \pr_{\refax(L)}(L\cap[0,1]^d)\right\}.\]

We will prove that for each $b\in B$ there exists at least one grid cube $C\in\slt'_n(L)$ such that $\pr_{\refax(L)}(C)=[\frac{b}{n}, \frac{b+1}{n}]$. As $|B|\ge \frac{n}{2d}-2$, this immediately implies property (ii).

Fix $b\in B$ and consider the open segment
\[S_b = \left\{x\in L : \frac{b}{n}< x_{\refax(L)} < \frac{b+1}{n}\right\}.\]
This segment is contained in the unit cube $[0,1]^d$, because we know that $\left[\frac{b}{n}, \frac{b+1}{n}\right]\subseteq \pr_{\refax(L)}(L\cap[0,1]^d)$ (and the restriction $\pr_{\refax(L)}\upharpoonright_L$ is a bijection).

For $1\le j\le d$ consider the set
\[P_{j}=\left\{x\in\R^d: x_j=\frac{a}{n}\text{ for some } a\in\mathbb{N}, 0\le a \le n\right\}\]
(which is the union of $n+1$ parallel `cutting' hyperplanes). These sets describe the coarse grid $\mathcal{C}^d_n$ in the sense that the bounded connected components of $\R^d\setminus \bigcup_{j=1}^d P_j$ are the interiors of cubes $C\in \mathcal{C}^d_n$.

We say that a point $p\in S_b$ is a \emph{partitioning point} if there is $1\le j\le d$ such that $p\in P_j$, but $L\nsubseteq P_j$. It is clear that for any $1\le j\le d$,
\[\length(\pr_j(S_b))\le \diaminfty(S_b)=\length(\pr_{\refax(L)}(S_b))= \frac1n\]
and therefore if $L \nsubseteq P_j$, then $|S_b \cap P_j|\le 1$ (also using the fact that $S_b$ is an open segment). Moreover, it is clear from the definition of $S_b$ that $S_b\cap P_{\refax(L)}=\emptyset$. These show that there are at most $d-1$ partitioning points in the open segment $S_b$. 

This implies that we can find an open segment $S\subseteq S_b$ which does not contain any partitioning points and satisfies that
\[\diaminfty(S)\ge \frac{\diaminfty(S_b)}{d}=\frac{1}{dn}.\]
It is straightforward to verify that this segment $S$ must be contained in the closure $\overline{U}$ of a (not necessarily uniquely determined) connected component $U$ of $\R^d\setminus \bigcup_{j=1}^d P_j$.

As $S\subseteq S_b\subseteq[0,1]^d$, we may assume that this component $U$ is a bounded one, which means that the cube $C= \overline{U} \in \mathcal{C}^d_n$ satisfies that $S \subseteq C$. The definition of $S_b$ clearly implies that $\pr_{\refax(L)}(C)=[\frac{b}{n}, \frac{b+1}{n}]$.

Notice that the segment $S\subseteq L \cap C$ is covered by the elements of $[C]\cap \lt_{4dn}(L)$ (which are cubes from the fine grid) and therefore
\[\pr_{\refax(L)}(S) \subseteq \bigcup_{C^*\in [C]\cap\lt_{4dn}(L)}\pr_{\refax(L)}(C^*).\]
As the cubes $C^*\in [C]\cap\lt_{4dn}(L)$ are from the fine grid, their projections under $\pr_{\refax(L)}$ are all intervals of length $\frac{1}{4dn}$. These projections cover the interval $\pr_{\refax(L)}(S)$, but the choice of $S$ guarantees that
\[\length(\pr_{\refax(L)}(S))= \diaminfty(S) \ge \frac{1}{dn},\]
therefore this can only happen if there are at least four distinct intervals of the form $\pr_{\refax(L)}(C^*)$ for $C^*\in [C]\cap\lt_{4dn}(L)$, i.e.
\[|\{ \pr_{\refax (L)}(C^*): C^*\in [C]\cap \lt_{4dn}(L)\}|\ge 4.\]
But this is the condition \eqref{eq:sltprime def}
for the cube $C$, so we proved that $C\in\slt'_n(L)$. As this grid cube $C$ corresponds to the arbitrary element $b\in B$, we have proved that property (ii) is satisfied.
\end{proof}

We will also need the following straightforward lemma:
\begin{lemma}\label{l:non-empty interior}
Consider a cube $C$ with axis-parallel edges and assume that $K\subseteq \R^d$ satisfies that if a line $L$ strongly intersects $C$, then $L$ intersects $K$. Then for each hyperplane $H$, the projection $\pr_H(K)$ has non-empty interior on $H$.
\end{lemma}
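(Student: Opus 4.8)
The statement to prove is \autoref{l:non-empty interior}: if $K \subseteq \R^d$ has the property that every line strongly intersecting a fixed axis-parallel cube $C$ meets $K$, then $\pr_H(K)$ has non-empty interior on every hyperplane $H$. (I read the ``$S$'' in the displayed conclusion as a typo for $K$.) The plan is to reduce to a statement about families of \emph{parallel} lines: fixing a direction $v$ orthogonal to $H$, the projection $\pr_H(K)$ contains the image under $\pr_H$ of all points of $K$ lying on lines parallel to $v$, so it suffices to exhibit a full-dimensional family of lines parallel to $v$, each strongly intersecting $C$, whose base points in $H$ fill an open set. Concretely, $\pr_H$ restricted to such a family of parallel lines is just the bijection sending each line to its intersection with $H$ (translated appropriately), so if the set of these lines, parametrized by their intersection points with a fixed affine hyperplane transverse to $v$, contains an open set, then so does $\pr_H(K)$.

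First I would fix an arbitrary hyperplane $H$ with unit normal $v$, and let $c$ be the center and $s = \diaminfty(C)$ the side length of $C$. The key geometric observation is that strong intersection is a robust, ``open'' condition: the line through $c$ in direction $v$ has $\diaminfty(L \cap C) \ge s$ (it is a full chord, or at worst passes through a pair of opposite faces giving length at least $s$ in the relevant coordinate), which comfortably exceeds the threshold $\frac{s}{2d}$. Then I would argue that every line parallel to $v$ passing within $\ell^\infty$-distance $\delta$ of $c$, for a suitably small $\delta = \delta(C,d) > 0$, still strongly intersects $C$: the chord length $\diaminfty(L \cap C)$ varies continuously (indeed is lower semicontinuous and bounded below) as one translates $L$ within a small neighborhood, so one can choose $\delta$ small enough that $\diaminfty(L\cap C) \ge \frac{s}{2d}$ persists. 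This uses \autoref{strong intersection invariance} only implicitly, if at all; the real content is an elementary compactness/continuity estimate for the length of the intersection of a moving parallel line with a fixed cube.

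Having fixed this $\delta$, let $U$ be the set of points in $H$ of the form $\pr_H(x)$ where $x$ ranges over the $\ell^\infty$-ball of radius $\delta$ around $c$; this $U$ is open in $H$ (it contains a genuine ball around $\pr_H(c)$). For each $y \in U$, pick $x$ in that ball with $\pr_H(x) = y$ and let $L_y$ be the line through $x$ in direction $v$; then $L_y$ is within $\ell^\infty$-distance $\delta$ of $c$, hence strongly intersects $C$, hence by hypothesis meets $K$ in some point $z_y$. Since $L_y \parallel v \perp H$, we have $\pr_H(z_y) = \pr_H(x) = y$. Thus $U \subseteq \pr_H(K)$, and $\pr_H(K)$ has non-empty interior on $H$.

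The only step with any subtlety is the continuity/robustness estimate in the second paragraph --- verifying quantitatively that small parallel translations of a strongly intersecting line keep it strongly intersecting. This is genuinely routine (the chord function is piecewise-linear in the translation parameters and bounded below by a positive constant near $c$), so the proof in the paper presumably just asserts it; I would do the same, perhaps noting that one may even take the line through the center, for which $\diaminfty(L\cap C) = s > \frac{s}{2d}$ with room to spare, and invoke an open-neighborhood argument.
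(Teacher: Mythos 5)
Your proof is correct and follows essentially the same route as the paper: both arguments show that every line orthogonal to $H$ passing through a suitable neighborhood of the center of $C$ strongly intersects $C$, hence meets $K$, so $\pr_H(K)$ contains the (open) projection of that neighborhood. The only difference is cosmetic: the paper exhibits the neighborhood explicitly as the concentric cube $C'$ obtained by a homothety of ratio $1-\frac{1}{d}$ (whose $\ell^\infty$-distance to $\partial C$ is exactly the strong-intersection threshold $\frac{1}{2d}\diam^\infty(C)$, uniformly in the direction of $L$), whereas you obtain an unspecified $\delta$-neighborhood by a soft continuity/concavity argument for the chord-length function, which is equally valid here since no quantitative bound is needed.
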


\begin{proof}
Let $o$ denote the center of $C$ and consider a homothety with fixed point $o$ and similarity ratio $\left(1-\frac{1}{d}\right)$. This transformation shrinks $C$ into the cube
\[C' = o + \left(1-\frac{1}{d}\right)\cdot(C-o).\]

Assume that a line $L$ intersects $C'$. As $C'$ is contained in $C$, it is clear that $L$ also intersects the boundary $\partial C$ of the cube $C$. This implies that
\[\diaminfty(L\cap C)\ge \frac{1}{2d} \diaminfty(C),\]
because it is easy to verify that the $\ell^\infty$-distance between $C'$ and $\partial C$ is $\frac{1}{2d}\diaminfty(C)$.

This shows that for an arbitrary line $L$, if $L$ intersects $C'$, then $L$ strongly intersects $C$, and therefore $L$ intersects $K$ according to our assumption. Applying this observation to the lines orthogonal to $H$ yields that $\pr_H(C')\subseteq \pr_H(K)$. As $C'$ is a cube, its projection has non-empty interior, concluding our proof.
\end{proof}

\section{Projections with non-empty interior}\label{sec:non-empty interior}

We will prove the following theorem:
\begin{theorem}
\label{t:strong}
For every $d \ge 2$ there exists a compact set $K \subseteq [0,1]^d$ such that the projection of $K$ is of non-empty interior on every hyperplane, but $K+K$ is nowhere dense.
\end{theorem}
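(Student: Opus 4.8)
The plan is to build $K$ as a decreasing intersection of finite unions of grid cubes, $K = \bigcap_{j} K_j$, where each $K_j$ is a union of cubes from some grid $\mathcal{C}^d_{n_j}$ with $n_j$ growing very fast. At each stage we want two competing things: on the one hand, every line that strongly intersects $[0,1]^d$ should still strongly intersect some cube of $K_j$ (so that in the limit $K$ meets every such line, and by \autoref{l:non-empty interior} has full-interior projections on every hyperplane); on the other hand, $K_j$ should be ``thin enough'' that in the limit $K+K$ is nowhere dense. The tension is that $K+K$ nowhere dense forces us to throw away most cubes at each step, while keeping all strong line traces alive forces us to keep enough. The resolution is the probabilistic method: at step $j$ we pass to the subgrid $\mathcal{C}^d_{n_{j+1}}$ refining $K_j$ and keep each small cube independently with a small probability $p_j$, then show that with positive probability the resulting $K_{j+1}$ still contains, for every $L \in \mathcal{L}$, at least one cube that $L$ strongly intersects.

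The key steps, in order. First, fix the abstract set-up: given $K_j$ a union of $N_j$ cubes of side $1/n_j$, subdivide each into $(m)^d$ cubes of side $1/(n_j m)$ for a large parameter $m$, and retain each independently with probability $p$. Second, the heart of the argument: union-bound over ``bad events''. A line $L \in \mathcal{L}$ is bad for $K_{j+1}$ if $L$ strongly intersects no retained cube. Using \autoref{strong intersection inheritance} applied within a cube of $K_j$ that $L$ strongly intersects (by scale-invariance, \autoref{strong intersection invariance}, strong intersection of that cube means $L$ restricted there behaves like a line strongly intersecting a unit cube), we get a set $\slt'_{m}(L)$ of at least $\frac{m}{2d} - 2$ cubes in the fine subdivision with \emph{distinct} $r$-th coordinates for some coordinate $r$; distinctness makes the retention events for these cubes independent, so the probability they are all discarded is at most $(1-p)^{m/2d - 2}$. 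The number of distinct values $\slt'_{m}$ can take inside one coarse cube is at most $d(8dm)^{\frac52 d!}$ by property (iii), so the number of relevant ``bad patterns'' over all of $K_j$ is at most $N_j \cdot d (8dm)^{\frac52 d!}$, which is polynomial in $m$. Hence the expected number of bad patterns is at most $N_j \cdot d (8dm)^{\frac52 d!} (1-p)^{m/2d-2}$, and for fixed $p > 0$ and $m$ chosen large enough this is $< 1$, so with positive probability $K_{j+1}$ kills no strong line trace coming from $K_j$. Third, run this inductively: since at each step strong intersections with $[0,1]^d$ are inherited (a line strongly intersecting $[0,1]^d$ strongly intersects some cube of $K_1$, which by induction survives down the tower), the compact set $K = \bigcap_j K_j$ meets every $L \in \mathcal{L}$; \autoref{l:non-empty interior} then gives non-empty interior projections on every hyperplane. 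Fourth, control $K + K$: here is where the probability $p_j$ must actually tend to $0$. The sumset $K_j + K_j$ is a union of at most $(2N_j)$-ish translated scaled cubes; we need $K+K$ nowhere dense, i.e. the cubes of $K_j + K_j$ to leave gaps at every scale. Keeping $p_j$ small (so $N_{j+1} \approx N_j \cdot p_j m^d$ grows slowly relative to the $m^d$ available positions) ensures that at each scale a definite fraction of the grid is empty in $K_j + K_j$; making $n_{j+1}$ (hence $m$) grow fast enough compared to $n_j$ guarantees these gaps appear at arbitrarily fine scales, forcing $K+K$ to be nowhere dense.

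Concretely I would interleave the choices: pick $p_j \to 0$ slowly (say $p_j = 1/j$), then pick $m = m_j$ large enough both to make the union bound above $< 1$ \emph{and} to create the required emptiness in $K_{j+1}+K_{j+1}$ at a new scale, then set $n_{j+1} = n_j m_j$. Record $N_{j+1} \le N_j p_j m_j^d$ — actually we only need an upper bound on $N_{j+1}$, and the favorable outcome of the random experiment can be taken to also satisfy $N_{j+1} \le 2 p_j N_j m_j^d$ by another application of Markov's inequality (the expected number of retained cubes is $p_j N_j m_j^d$). Then verify the nowhere-density: around any point and any scale $\varepsilon$, choose $j$ with $1/n_j < \varepsilon$; the sumset $K+K$ is contained in $K_j + K_j$, which is a union of $\le N_j^2$ cubes of side $2/n_j$ sitting in a grid of $\sim n_j^d$ cells — but with $p_j \to 0$ the ``density'' $N_{j+1} m_{j+1}^{-d}/N_j n_j^{-d}$-type ratio forces, at scale $1/n_{j+1}$, a grid cube disjoint from $K_{j+1}+K_{j+1}$ near our target point, and one checks this persists to $K+K$.

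The main obstacle will be step four, the nowhere-density bookkeeping: one must show that ``keeping few cubes'' (small $p_j$) really does force empty grid cells in the sumset \emph{near every point and at arbitrarily fine scales}, not just somewhere. The subtlety is that $K_j+K_j$ can cover far more of the grid than $K_j$ itself does (two sparse sets can have a dense sumset), so the argument cannot be a naive counting bound; it needs the hierarchical structure — within each surviving coarse cube of $K_j$ we subdivide and keep a $p_j$-fraction, and the sumset of two such refined cubes is again constrained to a thin pattern at the fine scale — together with $p_j \to 0$ and fast-growing $n_j$, to propagate emptiness down to every scale. Everything else (the union bound via \autoref{strong intersection inheritance}, the inheritance of strong intersections, the final appeal to \autoref{l:non-empty interior}) is either routine or already packaged in the lemmas above.
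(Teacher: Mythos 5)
The first half of your plan --- build $K=\bigcap_j K_j$ as unions of grid cubes, retain cubes at random, and use \autoref{strong intersection inheritance} (independence of the retention events for cubes with distinct $r$-th coordinates, union bound over the at most $d(8dm)^{\frac52 d!}$ possible values of $\slt'_m$, then \autoref{l:non-empty interior}) --- is exactly the paper's argument and is fine. The gap is in your step four, and it is genuine: your mechanism for making $K+K$ nowhere dense is ``retain with small probability $p_j\to 0$ and count,'' and this cannot work with independent retention. The two requirements on the parameters are incompatible: to beat the polynomially many trace patterns in the union bound you need $(1-p_j)^{m_j/(2d)}\cdot\mathrm{poly}(m_j)<1$, i.e.\ $p_j m_j\gtrsim \log m_j$; but for a given fine cell of $\frac{C+C'}{2}$ (with $C,C'$ coarse cubes of $K_j$) there are on the order of $m_j^d$ pairs of fine subcubes whose midpoint-sum hits that cell, each surviving with probability $p_j^2$, so the cell is covered with high probability unless $p_j^2 m_j^d\lesssim 1$, i.e.\ $p_j\lesssim m_j^{-d/2}$. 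For $d\ge 2$ these two constraints contradict each other, so a $p_j$ making the line-trace bound work will, with overwhelming probability, produce a sumset with \emph{no} empty cells at the new scale. You correctly flag that ``two sparse sets can have a dense sumset,'' but the appeal to ``hierarchical structure'' does not resolve this; the problem already occurs inside a single pair of coarse cubes.

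The paper's resolution is different in kind: it does not try to create gaps near every point at every scale, and it does not let the retention probability tend to $0$. It enumerates a countable basis $U_1,U_2,\dots$ and at stage $n+1$ only arranges $\frac{K_{n+1}+K_{n+1}}{2}\nsupseteq U_{n+1}$; since $K+K$ is compact, missing a point of every basic open set already gives empty interior. Concretely, one fixes a single point $x\in U_{n+1}$ with all coordinates irrational, colors each admissible cube black with probability $\tfrac12$, and then \emph{deterministically filters}: keep a black cube $C$ only if no black cube $C'$ satisfies $x\in\frac{C+C'}{2}$. This guarantees $x\notin\frac{K_{n+1}+K_{n+1}}{2}$ for every outcome of the coloring, while the irrationality of $x$ ensures that at most $2^d$ cubes $C'$ can conflict with a given $C$, so each cube is retained with probability at least $2^{-2^d-1}$ --- a constant depending only on $d$, which is all the union bound needs. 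If you replace your step four by this ``exclude one point per stage via a post-coloring filter'' device (and accordingly drop the requirement $p_j\to 0$), the rest of your argument goes through as written.
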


\begin{proof}
Our compact set $K$ is constructed as the intersection of sets $K_n$ such that each $K_n$ is a union of finitely many closed cubes from $\mathcal{C}^d_{a_n}$ for some $a_n$. 

Let $U_1, U_2, \dots$ be an enumeration of a countable basis of $[0, 1]^d$ which does not contain the empty set.

For each $m\ge1$, fix a map $\slt'_m$ with the properties guaranteed in \autoref{strong intersection inheritance}. (That is, the range of $\slt'_m$ has cardinality $\le d (8dm)^{\frac{5}{2}d!}$ and if $L$ strongly intersects $[0,1]^d$, then $\slt'_m(L)\subseteq\mathcal{C}^d_m$ contains cubes which are all strongly intersected by $L$ and there exists a coordinate $1\le r\le d$ such that $|\{\pr_r(C) : C\in \slt'_m(L)\}|\ge\frac{m}{2d}-2$.)

%In the trivial case of $m=1$ let us define $\slt'_1(L)=\{[0,1]^d\}$ for any line $L$ that strongly intersects $[0,1]^d$. (The conditions of \autoref{strong intersection inheritance} would also allow $\slt'_1(L)=\emptyset$ for some lines, but that would cause technical difficulties.)

The constructed sequence of sets $K_0, K_1, \dots$ and sequence of positive integers $a_0, a_1, \dots$ will have the following properties for each $n \ge 0$:

\begin{enumerate}[label=(\roman*)]
\item there is a collection $\mathcal{C}_n \subseteq \mathcal{C}^d_{a_n}$ such that $K_n = \bigcup \mathcal{C}_n$, 
\item if $n\neq 0$, then $K_{n} \subseteq K_{n-1}$ and $a_n$ is divisible by $a_{n-1}$,
\item if a line $L$ strongly intersects $[0,1]^d$, it strongly intersects a cube $C\in\mathcal{C}_n$,
\item if $n\neq 0$, then $\frac{K_n + K_n}{2} \nsupseteq U_n$.
\end{enumerate}

Before describing this construction, we show that these properties imply that $K = \bigcap_n K_n$ satisfies the conditions of the theorem. If a line $L$ strongly intersects $[0,1]^d$, then (iii) implies that $K_n \cap  L \neq \emptyset$ for each $n\in\mathbb{N}$, as this is a decreasing sequence of non-empty compact sets, this implies that $K\cap L\neq\emptyset$. Now we can apply \autoref{l:non-empty interior} with $C = [0,1]^d$ and $K$ to see that the projection of $K$ is of non-empty interior on every hyperplane. On the other hand, (iv) clearly implies that $K+K$ is nowhere dense.

To start the construction, let $a_0=1$ and $K_0 = [0, 1]^d$. It is easy to check that conditions (i)--(iv) are all satisfied for $n = 0$. 

Now suppose that positive integers $a_0 < a_1 < \dots < a_n$ and compact sets $K_0 \supseteq K_1 \supseteq \dots \supseteq K_n$ are defined and satisfy conditions (i)--(iv). Our task is to define $a_{n + 1}$ and $K_{n + 1}$. To construct $K_{n + 1}$, we use a probabilistic argument that depends on the choice of $a_{n + 1}$. We first describe the construction, then show that if $a_{n + 1}$ is large enough, then $a_{n + 1}$ and the resulting set $K_{n + 1}$ satisfy (i)--(iv) with positive probability.

So let $a_{n + 1}$ be some fixed positive integer which is an integer multiple of $a_n$. According to (i), we need to define $K_{n+1}$ by selecting some grid cubes from the `fine' grid $\mathcal{C}^d_{a_{n+1}}$ and taking their union. To satisfy (ii), we may only select grid cubes from the set
\[\mathcal{T}_{n+1}=\{C \in \mathcal{C}^d_{a_{n+1}} : C\subseteq K_n\}.\]
As $U_{n+1}$ is open and the points with irrational coordinates form a dense set, we can fix a point $x\in U_{n+1} \cap (\R\setminus\mathbb{Q})^d$. During the construction we will ensure that $x\notin \frac{K_n+K_n}{2}$ and this will clearly imply (iv).

Let us color each cube in $\mathcal{T}_{n+1}$ black with probability $\frac12$ independently of each other, and denote by $\mathcal{T}^B_{n+1}$ the set of black cubes.

We can define $\mathcal{C}_{n+1}$ as
\[\mathcal{C}_{n+1}=\left\{C\in \mathcal{T}^B_{n+1}: \text{every } C'\in \mathcal{T}^B_{n+1}\text{ satisfies that }x\notin \frac{C+C'}{2}\right\}.\]
It is clear that for any $a_{n+1}$, which is divisible by $a_n$, the number $a_{n+1}$ and the set $K_{n+1}=\bigcup \mathcal{C}_{n+1}$ satisfy properties (i), (ii) and (iv) for any possible coloring of the cubes. To conclude the proof, we show that if $a_{n+1}$ is large enough, then $K_{n+1}$ satisfies (iii) with positive probability.

For a cube $C\in \mathcal{T}_{n+1}$, the event $\{C\in \mathcal{C}_{n+1}\}$ is determined by the coloring of the cube $C$ and the cubes in 
\[\mathcal{E}_C = \left\{C'\in \mathcal{T}_{n+1}: x\in \frac{C+C'}{2}\right\},\]
therefore this event is independent of the coloring of cubes $\tilde{C}\notin \{C\}\cup \mathcal{E}_C$. Notice that $|\mathcal{E}_C|\le 2^d$, because the condition $x\in \frac{C + C'}{2}$ is equivalent to $C'\cap (2x-C) \neq \emptyset$ and we assumed that $x$ has no rational coordinates. This shows that
\[\mathbb{P}(C\in \mathcal{C}_{n+1})\ge \frac{1}{2^{2^d +1}}.\]

As property (iii) holds for $n$, we know that if a line $L$ strongly intersects $[0,1]^d$, then it strongly intersects a cube $C_L\in \mathcal{C}_n$. For each line $L$ that strongly intersects $[0,1]^d$, let us fix one such $C_L$. Let $\psi_L$ denote the homothety which maps $C_L$ onto $[0,1]^d$ and has a positive similarity ratio $a_n$.

As $a_n$ is a divisor of $a_{n+1}$, we may note that the homothety $\psi_L$ induces a bijection between the grids
\[\{C'\in \mathcal{C}^d_{a_{n+1}} : C'\subseteq C_L\}\text{ (subdividing $C_L$)}\quad\text{and}\quad \mathcal{C}^d_{\frac{a_{n+1}}{a_n}} \text{ (subdividing $[0,1]^d$)}.\]

According to \autoref{strong intersection invariance}, $\psi_L(L)$ strongly intersects $[0,1]^d$ and therefore we may consider the family
\[\mathcal{F}_L = \left\{C\in \mathcal{C}^d_{a_{n+1}} : C\subseteq C_L \text{ and } \psi_L(C)\in \slt'_{\frac{a_{n+1}}{a_n}}(\psi_L(L))\right\}\]
of grid cubes. Note that $C_L\in \mathcal{C}_n$ means that $C_L\subseteq K_n$, and this implies that $\mathcal{F}_L\subseteq \mathcal{T}_{n+1} = \{C \in \mathcal{C}^d_{a_{n+1}} : C\subseteq K_n\}$.

We know that the family $\mathcal{F}_L$ satisfies that
\[\exists r\in\{1,2, \ldots, d\}: \quad |\{\pr_r(C): C\in \mathcal{F}_L\}|\ge \frac{\frac{a_{n+1}}{a_n}}{2d} -2= \frac{a_{n+1}}{2 d a_n} -2\] 
because \autoref{strong intersection inheritance} guarantees that the maps $\slt'_m$ ($m = 1, 2, \ldots$) satisfy an analogous condition and $\psi_L$ preserves the relation that two cubes have the same projection onto a certain coordinate.

We want to define a subfamily $\mathcal{F}''_L\subseteq \mathcal{F}_L$ which is `large' but satisfies that the events $(\{C\in \mathcal{C}_{n+1}\})_{C \in \mathcal{F}''_L}$ are independent.

First, notice that the projections $\pr_r(C)$ (where $C\in \mathcal{F}_L$) are all closed intervals of the form $\left[\frac{i}{a_{n+1}}, \frac{i+1}{a_{n+1}}\right]$ for some integers $i\in\Z$. Therefore we may select a set $\mathcal{F}'_L$ that contains at least $\frac{1}{2}\left(\frac{a_{n+1}}{2 da_n} -2\right)$ many cubes and satisfies that $\pr_r(C_1)\cap \pr_r(C_2)=\emptyset$ for any two distinct cubes $C_1, C_2\in \mathcal{F}'_L$. It is straightforward to verify that this implies that $\mathcal{E}_{C_1}$ is disjoint from $\mathcal{E}_{C_2}$.

Among the selected cubes there may be at most one cube $C^* \in \mathcal{F}'_L$ which satisfies that $\pr_r(x)\in\pr_r(C^*)$. With the possible exception of this $C^*$, partition the selected cubes into the sets
\[\mathcal{F}^+_L = \{C\in \mathcal{F}'_L : \min (\pr_r(C))> x_r\}\]
and
\[\mathcal{F}^-_L = \{C\in \mathcal{F}'_L : \max (\pr_r(C))< x_r\}.\]
This partition guarantees that if $C_1, C_2\in \mathcal{F}^\varepsilon_L$ for  a sign $\varepsilon\in\{+,-\}$, then $x\notin\frac{C_1 + C_2}{2}$ and therefore $C_1\notin \mathcal{E}_{C_2}$ (and analogously $C_2\notin \mathcal{E}_{C_1}$).

If we pick the larger of these two sets as $\mathcal{F}''_L$, then we have
\[|\mathcal{F}''_L| \ge \frac{1}{2} \left(\frac{1}{2}\left(\frac{a_{n+1}}{2 d a_n} -2\right)-1\right)=\frac{a_{n+1}}{8d a_n} -1.\]
Moreover, the events $(\{C\in \mathcal{C}_{n+1}\})_{C \in \mathcal{F}''_L}$ are independent, because the system
\[\big\{\{C\}\cup\mathcal{E}_C : C\in \mathcal{F}''_L\big\}\]
consists of pairwise disjoint sets.

This independence implies that
\[\mathbb{P}(\mathcal{F}_L \cap \mathcal{C}_{n+1}= \emptyset) \le\mathbb{P}(\mathcal{F}''_L \cap \mathcal{C}_{n+1}= \emptyset) \le \left(1 -\frac{1}{2^{2^d +1}}\right)^{\frac{a_{n+1}}{8d a_n} -1}.\]

Although there are infinitely many lines that strongly intersect $[0,1]^d$, we may notice that there are at most $a_n^d$ choices for the cube $C_L\in \mathcal{C}_n\subseteq \mathcal{C}^d_{a_{n}}$. In each of these cubes, there are at most $d\!\left(8d \frac{a_{n+1}}{a_n}\right)^{\frac{5}{2}d!}$ possible choices for the set $\mathcal{S}_L$, because \autoref{strong intersection inheritance} guarantees that the maps $\slt'_m$ (for $m=1,2\ldots$) have small range.

This means that the probability of that (iii) is not satisfied (i.e.~there exists a line $L$ such that it strongly intersects $[0,1]^d$, but there is no $C\in \mathcal{C}_{n+1}$ which is strongly intersected by $L$) is at most
\[a_n^d d \left(8d \frac{a_{n+1}}{a_n}\right)^{\frac{5}{2}d!} \left(1 -\frac{1}{2^{2^d +1}}\right)^{\frac{a_{n+1}}{8da_n} -1}.\]
It is easy to see that if $a_{n+1}$ is large enough, then this probability is less than 1, which concludes our proof.
\end{proof}

\section{Constructions with full projections}\label{sec:full projections}

We now turn our attention to constructing compact sets $K \subseteq \R^d$ with $K+K$ nowhere dense such that $K$ has full $(d-k)$-dimensional projections, instead of only requiring that the projections have non-empty interior. However, as we will see in Section \ref{sec:negative}, one cannot construct such examples for each $k$ and $d$.

\bigskip

\begin{theorem} \label{t:K+K construction}
  If $k \ge \frac{d}{2}$, then there 
  exists a $(2, k, d)$-set, that is, a compact set $K \subseteq [0, 1]^d$ such that $K$ intersects each $k$-dimensional affine subspace that meets $[0,1]^d$, but $K + K$ is nowhere dense.
\end{theorem}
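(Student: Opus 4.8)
The plan is to split the proof according to whether the inequality $k\ge\frac{d}{2}$ is strict, after two reductions. First, if a compact set intersects every $k$-dimensional affine subspace meeting $[0,1]^d$, then it intersects every $k'$-dimensional one for $k'\ge k$ (such a subspace contains, through any of its points in the cube, a $k$-dimensional affine subspace that still meets the cube), so it suffices to treat $k=\lceil d/2\rceil$. Second, adjoining the finitely many vertices $\{0,1\}^d$ to $K$ only adds finitely many translates of $K$ and a finite set to $K+K$, so we may assume $\{0,1\}^d\subseteq K$; by \autoref{f:projections <-> intersections} the requirement on $K$ is exactly that it has full $(d-k)$-dimensional projections.

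I would first dispose of the case $2k>d$ (which, given $k\ge\frac{d}{2}$, is automatic when $d$ is odd and also covers $k\ge\frac{d}{2}+1$ when $d$ is even). Here I claim that the \emph{$(d-k)$-skeleton} of $[0,1]^d$, i.e.\ the union of its faces of dimension $\le d-k$, is already a $(2,k,d)$-set. For the intersection property: if $V$ is a $k$-dimensional affine subspace meeting $[0,1]^d$, then $Q=V\cap[0,1]^d$ is a non-empty compact polytope, hence has a vertex $v$; since $v$ is a vertex the tangent cone of $Q$ at $v$ is pointed, so the direction space of $V$ meets the direction space of the minimal face of $[0,1]^d$ through $v$ only in $0$, which forces $v$ to lie on at least $k$ facets of $[0,1]^d$, i.e.\ in a face of dimension $\le d-k$; thus $V$ meets the skeleton. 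For the sumset: the Minkowski sum of two $(d-k)$-dimensional faces is an axis-parallel box with at most $2(d-k)<d$ sides of positive length, and there are only finitely many faces, so the skeleton's sumset is a finite union of lower-dimensional boxes and is nowhere dense. (This argument breaks down precisely when $d$ is even and $k=\frac{d}{2}$, where two complementary $\frac{d}{2}$-faces sum to a full-dimensional box.)

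The remaining, critical case is $d$ even, $k=\frac{d}{2}$, and there I would run the probabilistic construction of \autoref{t:strong} with lines replaced by $\frac{d}{2}$-dimensional affine subspaces. One builds $K=\bigcap_nK_n$, each $K_n$ a union of grid cubes from some $\mathcal{C}^d_{a_n}$ with $K_{n+1}\subseteq K_n$ and $a_n\mid a_{n+1}$, ensuring that every $\frac{d}{2}$-plane which \emph{strongly intersects} $[0,1]^d$ strongly intersects a cube of $K_n$, while $\tfrac{K_n+K_n}{2}\not\supseteq U_n$ for an enumeration $(U_n)$ of a basis of $[0,1]^d$ (so $K+K$ is nowhere dense). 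The ingredients are: a scaling-invariant notion of strong intersection of a $\frac{d}{2}$-plane $V$ with a cube $C$ (say, $V\cap C$ contains a $\frac{d}{2}$-dimensional axis-parallel sub-box of side $\ge \frac{1}{c(d)}\diaminfty(C)$); the analogue of \autoref{strong intersection inheritance}, producing a reduced strong trace $\slt'_n(V)$ with small range whose cubes have $\ge \frac{n}{c(d)}-O(1)$ distinct projections onto some coordinate axis — which in turn rests on a polynomial-in-$n$ bound for the number of traces of $\frac{d}{2}$-planes in $\mathcal{C}^d_n$, to be proved following the pattern of \autoref{c:projections of traces} and \autoref{cor:number of line traces} (reducing via coordinate projections, or to the traces of $\frac{d}{2}$ spanning lines); and the colour-and-keep step exactly as in \autoref{t:strong} — colour candidate cubes black or white independently, keep a black cube $C$ unless some black $C'$ has $x\in\frac{C+C'}{2}$ for a fixed $x\in U_{n+1}$ with irrational coordinates, note $|\mathcal{E}_C|\le 2^d$ so that a cube survives with probability $\ge 2^{-(2^d+1)}$, extract from each strong trace a family of size $\Omega_d(a_{n+1}/a_n)$ of cubes with pairwise disjoint projections on one axis, all on one side of $x$, whose survival events are independent, and union-bound over the $\le a_n^d\cdot\mathrm{poly}_d(a_{n+1}/a_n)$ relevant traces, which beats $1$ for $a_{n+1}$ large.

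The main obstacle is the $\frac{d}{2}$-planes that meet $[0,1]^d$ without strongly intersecting it, and I would deal with these by induction on $d$ together with a self-similar ingredient. Such a $V$ either meets $[0,1]^d$ in dimension $\le\frac{d}{2}-1$ — and then the pointed-tangent-cone argument above shows that $V$ meets the $(\frac{d}{2}-1)$-skeleton, whose sumset is a finite union of boxes of dimension $\le d-2$ and is therefore nowhere dense, so this skeleton can simply be added to $K$ — or it meets the cube full-dimensionally but inside an arbitrarily small region abutting a $\frac{d}{2}$-face. The latter planes are caught by interleaving into the $K_n$'s rescaled translated copies of the whole construction, placed near the $\frac{d}{2}$-faces with pairwise disjoint nowhere-dense supports shrinking to the faces, so that $\bigcap_nK_n$ looks near each such region like a scaled $(2,\frac{d}{2},d)$-set while $K+K$ stays nowhere dense; the base case is $d=2$, i.e.\ \autoref{t:strong} for $d=2$ (or the self-similar construction of the later section). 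I expect the combinatorics of $\frac{d}{2}$-plane traces and this self-referential treatment of the boundary to be the two delicate points, the latter — seeding copies of $K$ densely toward the $\frac{d}{2}$-faces without destroying nowhere-density of $K+K$ — being the hardest.
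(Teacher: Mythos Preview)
Your reduction to $k=\lceil d/2\rceil$ and your treatment of the strict case $2k>d$ via the $(d-k)$-skeleton are correct and pleasant: the vertex/tangent-cone argument does show that every vertex of $V\cap[0,1]^d$ lies in a face of dimension $\le d-k$, and when $2(d-k)<d$ the sumset of the skeleton is a finite union of lower-dimensional boxes. This is a genuine simplification over the paper, which does not split off this case.

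The gap is in the critical case $2k=d$. You correctly isolate the difficulty as those $\tfrac{d}{2}$-planes that meet the cube but fail to ``strongly intersect'' it, and you propose to catch them by a self-referential seeding of scaled copies of $K$ near the $\tfrac{d}{2}$-faces, together with an induction on $d$. But this step is not carried out: you do not specify where the copies sit, why a non-strongly-intersecting plane must hit one of them, or why the sumset of the seeded configuration stays nowhere dense; you yourself flag this as ``the hardest'' part. As written, the critical case is a plan, not a proof.

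The paper avoids this obstacle entirely by a different idea. It never tracks $\tfrac{d}{2}$-plane traces. Instead it proves a geometric dichotomy (\autoref{l:subspace intersects skeleton or center}, valid precisely when $k\ge\tfrac{d}{2}$): every $k$-plane meeting $[0,1]^d$ either hits a fixed ``skeleton'' region $S$ (a thickening of the $(d-k-1)$-skeleton), or contains a point of a fixed ``center'' region $T$ well inside the cube. In the inductive step one always keeps the rescaled copies $\mathcal{S}_{n+1}$ of $S$ inside each surviving cube, and only randomizes over cubes in the rescaled $T$-region $\mathcal{T}_{n+1}$. The arithmetic of $S$, $T$ and the target cube $C_{n+1}$ is arranged so that $(\mathcal{S}_{n+1}+\mathcal{S}_{n+1})/2$ and $(\mathcal{S}_{n+1}+\mathcal{T}_{n+1})/2$ already miss $\operatorname{int}C_{n+1}$ (here the hypothesis $k\ge d/2$ is used again, for $S+S$). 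Any $k$-plane missing $\mathcal{S}_{n+1}$ must, by the lemma, contain a \emph{line segment} of length $\asymp 1/a_n$ inside $\bigcup\mathcal{T}_{n+1}$, and from that point on one runs exactly the line-trace probabilistic argument of \autoref{t:strong} and \autoref{cor:number of line traces}. So the combinatorics stays at the level of lines, and the boundary $k$-planes are absorbed deterministically by $\mathcal{S}_{n+1}$; no self-reference, no induction on $d$, and no $\tfrac{d}{2}$-plane trace count is needed.
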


\begin{proof}
  Analogously to the proof of \autoref{t:strong}, our compact set $K$ is constructed as the intersection of sets $K_n$ such that each $K_n$ is a union of finitely many closed cubes from $\mathcal{C}^d_{a_n}$ for some $a_n$. 
  
  Let $U_1, U_2, \dots$ be an enumeration of a countable basis of $[0, 1]^d$ containing non-empty sets.
  The constructed sequence of sets $K_0, K_1, \dots$ and sequence of positive integers $a_0, a_1, \dots$ will have the following properties for each $n \ge 0$:
  
  \begin{enumerate}[label=(\roman*)]
  \item $K_n = \bigcup \mathcal{C}_n$ for some $\mathcal{C}_n \subseteq \mathcal{C}^d_{a_n}$, 
  \item if $n\neq 0$, then $K_{n} \subseteq K_{n-1}$ and $a_n$ is divisible by $20a_{n-1}$,
  \item $K_n \cap  V \neq \emptyset$ for each $k$-dimensional affine subspace $V$ that meets $[0, 1]^d$,
  \item if $n\neq 0$, then $\frac{K_n + K_n}{2} \not \supseteq U_n$,
  \item $U_{n + 1}$ contains a cube from $\mathcal{C}^d_{a_n}$. 
  \end{enumerate}
  
  Clearly, if we can carry out the construction in a way that the resulting sequence of sets satisfies the above properties, then $K = \bigcap_n K_n$ satisfies the condition of the theorem: For an affine subspace $V$ with $V \cap [0, 1]^d \neq \emptyset$, the sequence $(K_n \cap V)_{n}$ is decreasing and consists of non-empty compact sets, and therefore $K \cap  V \neq \emptyset$, while (iv) implies that $K + K$ is nowhere dense.
  
  To start the construction, let $a_0$ be large enough so that $U_1$ contains a cube from $\mathcal{C}^d_{a_0}$, and let $K_0 = \bigcup \mathcal{C}^d_{a_0} = [0, 1]^d$. It is easy to check that conditions (i)--(v) are all satisfied for $n = 0$. 
  
  Now suppose that positive integers $a_0 < a_1 < \dots < a_n$ and compact sets $K_0 \supseteq K_1 \supseteq \dots \supseteq K_n$ are defined and satisfy conditions (i)--(v). Our task is to define $a_{n + 1}$ and $K_{n + 1}$. We use a probabilistic argument to construct $K_{n + 1}$ that depends on $a_{n + 1}$. We first describe the construction, then show that if $a_{n + 1}$ is large enough, then $a_{n + 1}$ and the resulting set $K_{n + 1}$ satisfy (i)--(v) with positive probability.
  
  So let $a_{n + 1}$ be some fixed positive integer, a multiple of $20a_n$. Let
  \begin{align*}
    \textstyle\mathcal{S}_{n+1} = \{ C^d_{a_{n+1}}(t_1, \dots, t_d) \subseteq K_n: \textstyle|\{i \le d : \{ \frac{t_ia_n}{a_{n+1}}\} \in [0, \frac{1}{20}) \cup [\frac{19}{20}, 1)\}| \ge k + 1 \},
  \end{align*}
  \begin{align*}
    \textstyle
    \mathcal{T}_{n+1} = \{ C^d_{a_{n+1}}(t_1, \dots, t_d)  \subseteq (K_n \setminus \bigcup \mathcal{S}_{n+1}) : |\{i \le d : \{\frac{t_ia_n}{a_{n+1}}\} \in [\frac{9}{20}, \frac{13}{20})\}| \le k \},
  \end{align*}
  where $\{x\}$ denotes the fractional part of the number $x$. We note that since $a_{n+1}$ is a multiple of $20a_n$, for each cube $C = C^d_{a_{n+1}}(t_1, \dots, t_d)$, index $i \le d$, and integers $a < b$, 
  \begin{align}
      \label{e:conversion of cube and point coordinates}
      \textstyle
      \left\{\frac{t_ia_n}{a_{n+1}}\right\} \in \left[\frac{a}{20}, \frac{b}{20}\right) \qquad\Longleftrightarrow \qquad\forall x \in C \colon \{x_ia_n\} \in \left[\frac{a}{20}, \frac{b}{20}\right].
  \end{align}

  We select a cube $C_{n+1} \in \mathcal{C}^d_{a_{n+1}}$ with $C_{n+1} \subseteq U_{n+1}$ by choosing first a cube $C'_{n+1} = C^d_{a_n}(u_1, \dots, u_d) \in \mathcal{C}^d_{a_n}$ with $C'_{n+1} \subseteq U_{n+1}$ using (v). Then let $$\textstyle C_{n+1} = C^d_{a_{n+1}}\Big(\big(u_1 + \frac{1}{4}\big)\frac{a_{n+1}}{a_n}, \dots, \big(u_d + \frac{1}{4}\big)\frac{a_{n+1}}{a_n}\Big) \in \mathcal{C}^d_{a_{n+1}}.$$
  Since $a_{n+1}$ is a multiple of $20a_n$, the ``coordinates'' of $C_{n+1}$ are integers. One can also easily check that $C_{n+1} \subseteq C'_{n+1} \subseteq U_{n+1}$.   
  
  We now pause a bit in the proof and say a few words about the role of $\mathcal{S}_{n+1}$, $\mathcal{T}_{n+1}$ and $C_{n+1}$. The cube $C_{n+1}$ makes sure of satisfying (iv): the set $K_{n+1}$ will satisfy $\frac{K_{n+1} + K_{n+1}}{2} \cap \interior C_{n+1} = \emptyset$. $\mathcal{S}_{n+1}$ is some kind of neighborhood of the \mbox{$(d-k-1)$-ske}\-le\-ton of the cubes in $\mathcal{C}_n \subseteq \mathcal{C}_{a_n}^d$. The family of cubes $\mathcal{T}_{n+1}$ is chosen so that a $k$-dimensional affine subspace that meets $[0, 1]^d$ intersects either a cube from $\mathcal{S}_{n+1}$ or many cubes from $\mathcal{T}_{n+1}$; we will derive this fact from \autoref{l:subspace intersects skeleton or center}. The sets $\mathcal{S}_{n+1}$ and $\mathcal{T}_{n+1}$ also have the property
  \begin{align}
    \label{e:(S cap T)/2 disjoint from C}
    \frac{C^S + C^T}{2} \cap \interior C_{n+1} = \emptyset \text{ for all $C^S \in \mathcal{S}_{n+1}$ and $C^T \in \mathcal{T}_{n+1}$}.
  \end{align}
  To see this, note that \eqref{e:conversion of cube and point coordinates} implies $|\{ i\le d : \{x_ia_n\} \in [0, \frac{1}{20}] \cup [\frac{19}{20}, 1]\}|\ge k + 1$ for each $x \in \bigcup \mathcal{S}_{n+1}$,  and also that $|\{ i\le d : \{y_ia_n\} \in [\frac{9}{20},\frac{13}{20}]\}|\le k$ for each $y \in \bigcup \mathcal{T}_{n+1}$. It follows that there is an $i \le d$ with $\{x_ia_n\} \in [0, \frac{1}{20}] \cup [\frac{19}{20}, 1]$ and $\{y_ia_n\} \not \in [\frac{9}{20},\frac{13}{20}]$, therefore $$\textstyle \{\frac{x_i+y_i}{2}a_n\} \in [0, \frac{10}{40}] \cup [\frac{12}{40}, \frac{30}{40}] \cup [\frac{32}{40}, 1].$$ 
  Since $\{c_ia_n\} \in \left(\frac14, \frac14 + \frac1{20}\right)=\left(\frac{10}{40}, \frac{12}{40}\right)$ for each point $c \in \interior C_{n+1}$, \eqref{e:(S cap T)/2 disjoint from C} easily follows. 
  
  We now describe our probabilistic argument to leave out some cubes from $\mathcal{T}_{n+1}$ in order to satisfy (iv). We color each cube in $\mathcal{T}_{n+1}$ black with probability $\frac{1}{2}$ independently of each other, and denote by $\mathcal{T}_{n+1}^B$ the set of black cubes. Finally, let
  
  \begin{align*}\textstyle
    \mathcal{C}_{n+1} = \mathcal{S}_{n+1} \cup \left\{C \in \mathcal{T}_{n+1}^B : \forall C' \in \mathcal{T}_{n+1}^B \left(\frac{C+C'}{2} \cap \interior C_{n+1} = \emptyset\right) \right\}.    
  \end{align*}

  Now we show that if $a_{n+1}$ is sufficiently large then $K_{n+1} = \bigcup \mathcal{C}_{n+1}$ and $a_{n+1}$ satisfy the conditions (i)--(v) above for $n+1$ with positive probability. Clearly (i) and (ii) are satisfied. If $a_{n+1}$ is chosen to be large enough then $U_{n+2}$ contains a cube from $\mathcal{C}^d_{a_{n+1}}$, taking care of (v). 
  
  To check (iv), we show that if $x, x' \in K_{n+1}$ then $\frac{x+x'}{2} \not\in \interior C_{n+1} \subseteq U_{n+1}$. Let $C$ and $C'$ be cubes with $x\in C \in \mathcal{C}_{n+1}$ and $x' \in C' \in \mathcal{C}_{n+1}$. If $C, C' \in \mathcal{S}_{n+1}$ then using that $k \ge \frac{d}{2}$, there is an $i \le d$ with $\{x_ia_n\}, \{x'_ia_n\} \in [0, \frac{1}{20}] \cup [\frac{19}{20}, 1]$. It is then straightforward to check that $\frac{x+x'}{2}$ cannot coincide with a point $c \in \interior C_{n+1}$, since $c_i \in [\frac{5}{20}, \frac{6}{20}]$ for each $i$. 
  If $C, C' \in \mathcal{T}_{n+1}$ then $\frac{x+x'}{2} \not\in \interior C_{n+1}$ follows from our definition of $\mathcal{C}_{n+1}$, and if $C \in \mathcal{S}_{n+1}$ and $C' \in \mathcal{T}_{n+1}$ then it follows from \eqref{e:(S cap T)/2 disjoint from C}. 
  
  Therefore it remains to check (iii). Let $V$ be a $k$-dimensional affine subspace in $\R^d$ that intersects $[0, 1]^d$. Using (iii) for $n$, $V$ intersects some cube $C_V \in \mathcal{C}_n\subseteq \mathcal{C}^d_{a_n}$. We now use the following lemma to show that $V$ intersects either a cube from $\mathcal{S}_{n+1}$ or many cubes from $\mathcal{T}_{n+1}$.
  
  \begin{lemma}
    \label{l:subspace intersects skeleton or center}
  Suppose that $V$ is a $k$-dimensional affine subspace of $\R^d$ for some $k \ge \frac{d}{2}$ that meets $[0, 1]^d$. Then $V$ intersects either the set 
  $$
  \textstyle S = \left\{x \in [0, 1]^d : \left|\left\{i \le d : x_i \in \left[0, \frac{1}{20}\right] \cup \left[\frac{19}{20}, 1\right]\right\}\right| \ge k + 1\right\}, 
  $$
  or the set 
  $$
    \textstyle T = \left\{x \in \left[\frac{1}{20}, \frac{19}{20}\right]^d : \left|\left\{i \le d : x_i \in \left[\frac{8}{20}, \frac{14}{20}\right]\right\}\right| \le k \right\}.
  $$
  \end{lemma}
\theoremstyle{plain}

  We now continue with the proof of the theorem, and prove \autoref{l:subspace intersects skeleton or center} later. We can enlarge $C_V$ and $V$ using a homothety $\psi \colon \R^d \to \R^d$ with a ratio $a_n$ so that $\psi(C_V) = [0, 1]^d$, and then apply \autoref{l:subspace intersects skeleton or center}. Notice that $\psi(V)$ intersects the set $S$ of the lemma if and only if $V$ intersects a cube from $\mathcal{S}_{n+1}$.
  
  If $V$ does not intersect $\bigcup \mathcal{S}_{n+1}$, then $\psi(V)$ intersects the set $T$ of the lemma, meaning that there is a point $x \in V$ such that $\{x_ia_n\} \in [\frac{1}{20}, \frac{19}{20}]$ for all $i \le d$, and 
  $$\textstyle \left| \left\{ i \le d : \{x_ia_n\} \in \left[\frac{8}{20}, \frac{14}{20}\right]\right\}\right| \le k.$$ Let $S_V \subseteq V$ be any open segment of length $\frac{2}{20a_n}$ with midpoint $x$. Clearly, $S_V \subseteq C_V \subseteq K_n$, and $|\{i \le d : \{y_ia_n\} \in [\frac{9}{20}, \frac{13}{20}]\}| \le k$ for each $y \in S_V$. Now we show that 
  \begin{equation}
    \label{e:S_V subset T}
    S_V \subseteq \bigcup \mathcal{T}_{n+1}.    
  \end{equation}
  Let $y \in S_V$ be any point. Using \eqref{e:conversion of cube and point coordinates}, the cube $C_{a_{n+1}}^d(t_1, \dots, t_d) \in \mathcal{C}_{a_{n+1}}$ containing $y$ satisfies $|\{i \le d : \{t_ia_n\} \in [\frac{9}{20}, \frac{13}{20})\}|\le k$. Using also that $S_V \subseteq K_n$ and that $V \cap \bigcup \mathcal{S}_{n+1} = \emptyset$, we obtained that $y \in K_n \setminus \bigcup \mathcal{S}_{n+1}$, hence it is in $\mathcal{T}_{n+1}$.
  
  For each cube $C \in \mathcal{T}_{n+1}$, the event of $C$ being chosen into $\mathcal{C}_{n+1}$ only depends on the events $\{C \in \mathcal{T}_{n+1}^B\}$ and $\{\{C' \in \mathcal{T}_{n+1}^B \} : C' \in \mathcal{E}_C\}$, where
  $$\mathcal{E}_C = \left\{C' \in \mathcal{T}_{n+1} :  \frac{C+C'}{2} \cap \interior C_{n+1} \neq \emptyset\right\}.$$
  Since these are independent events each with probability $\frac{1}{2}$ and $|\mathcal{E}_C| \le 3^d$ for each $C \in \mathcal{T}_{n+1}$, \begin{equation}\label{e:p def}
   \mathbb{P}(C \in \mathcal{C}_{n+1}) \ge p, \text{ where } p=\frac{1}{2^{3^d + 1}}.    
  \end{equation}
  Moreover, if for the cubes $C = C^d_{a_{n+1}}(t_1, \dots, t_d), C' = C^d_{a_{n+1}}(t_1', \dots, t_d') \in \mathcal{T}_{n+1}$ there is an $i$ such that 
  \begin{equation}\label{e:neighboring cubes distinct}
    \textstyle |t_i - t_i'| \ge 3,
  \end{equation}
  then $\mathcal{E}_C \cap \mathcal{E}_{C'} = \emptyset$.  
  
  Let $i \le d$ be a coordinate such that $\length(\pr_i(S_V)) \ge \frac{1}{10\sqrt{d}a_n}$, such a coordinate exists. Then, using \eqref{e:S_V subset T}, we can choose a family $\mathcal{F}_V \subseteq \mathcal{T}_{n+1}$ with $|\mathcal{F}_V| \ge \lfloor\frac{a_{n+1}}{10\sqrt{d}a_n}\rfloor$, where $\lfloor x \rfloor$ denotes the integer part of a non-negative real number $x$, such that $C, C' \in \mathcal{F}_V$, $C \neq C'$ imply $\pr_i(C) \neq \pr_i(C')$ and $C \cap S_V \neq \emptyset$. 
  Let
  \begin{align*}
      \mathcal{F}_V^- = \{C \in \mathcal{F}_V : \min(\pr_i(C)) < \min(\pr_i(C_{n+1}))\}, \\ 
      \mathcal{F}_V^+ = \{C \in \mathcal{F}_V : \min(\pr_i(C)) > \min(\pr_i(C_{n+1}))\}.
  \end{align*}
  Since at most one cube $C$ from $\mathcal{F}_V$ satisfies $\pr_i(C) = \pr_i(C_{n+1})$, we clearly have $|\mathcal{F}_V^-| + |\mathcal{F}_V^+| \ge |\mathcal{F}_V|-1$. Let us pick the larger of these two sets as $\mathcal{F}_V'$, then $|\mathcal{F}_V'| \ge \lfloor \frac{|\mathcal{F}_V|}{2}\rfloor$, therefore $|\mathcal{F}_V'| \ge \lfloor\frac{a_{n+1}}{20\sqrt{d}a_n}\rfloor$. 
  Let us choose every third cube from $\mathcal{F}_V'$ into $\mathcal{F}_V''$ so that $|\mathcal{F}_V''| \ge \lfloor\frac{a_{n+1}}{60\sqrt{d}a_n}\rfloor$, and the cubes in $\mathcal{F}_V''$ satisfy \eqref{e:neighboring cubes distinct}. 
  
  We claim that the events $$\{\{C \in \mathcal{C}_{n+1} \} : C \in \mathcal{F}_V''\}$$ are independent. In order to prove our claim, it is enough to show that the system $\{\{C\} \cup \mathcal{E}_C : C \in \mathcal{F}_V''\}$ consists of pairwise disjoint sets. Let $C, C' \in \mathcal{F}_V''$. \eqref{e:neighboring cubes distinct} makes sure that $\mathcal{E}_C \cap \mathcal{E}_{C'} = \emptyset$. To show that $C \not \in \mathcal{E}_{C'}$, (and similarly, $C' \not \in \mathcal{E}_{C}$), one needs to show that $\frac{C \cap C'}{2} \cap \interior C_{n+1} = \emptyset$, a fact that easily follows from $\mathcal{F}_V''$ being a subset of either $\mathcal{F}_V^-$ or $\mathcal{F}_V^+$. 
  
  Using this claim and \eqref{e:p def}, it follows that the probability that no cube from $\mathcal{F}_V''$ is selected is at most $(1-p)^{\lfloor \frac{a_{n+1}}{60\sqrt{d}a_n}\rfloor}$. 
  
 Consider an arbitrary $k$-dimensional subspace $V$ that meets $[0, 1]^d$. If $V \cap\bigcup  \mathcal{S}_{n+1} = \emptyset$, then let $ L_V$ denote the line containing the segment $S_V$; otherwise (i.e.~when $V \cap\bigcup \mathcal{S}_{n+1} \neq \emptyset$) choose an arbitrary line  $ L_V \subseteq V$ such that $ L_V \cap\bigcup \mathcal{S}_{n+1} \neq \emptyset$.  In either case, 
 $$\mathbb{P}( L_V \cap K_{n+1} = \emptyset) \le (1-p)^{\left\lfloor \frac{a_{n+1}}{60\sqrt{d}a_n}\right\rfloor},$$
  since this is obvious if $L_V \cap \bigcup \mathcal{S}_{n+1} \neq \emptyset$, and otherwise it is clear that $L_V \cap K_{n+1} = \emptyset$ implies that no cube from $\mathcal{C}$ is selected. 
  
  Using \autoref{cor:number of line traces}, the number of line traces in $\mathcal{K}^d_{a_{n+1}}$ associated to lines of the form $ L_V$ is at most $(2a_{n+1})^{\frac{5}{2}d!}$. It follows that 
  $$\mathbb{P}\big(\exists V (V \cap K_{n+1}) = \emptyset\big) \le \mathbb{P}\big(\exists V ( L_V \cap K_{n+1}) = \emptyset\big) \le (2a_{n+1})^{\frac{5}{2}d!}(1-p)^{\left\lfloor \frac{a_{n+1}}{60\sqrt{d}a_n}\right\rfloor}.$$
  Since $p$ only depends on $d$ by \eqref{e:p def}, it is easy to check that the above quantity tends to $0$ as $a_{n+1} \to \infty$. Hence, for large enough $a_{n+1}$ it is less than $1$, therefore with positive probability the random construction yields a compact set $K_{n+1}$ satisfying (iii). 
\end{proof}

We conclude this section by proving \autoref{l:subspace intersects skeleton or center}.

\begin{restate}{l:subspace intersects skeleton or center}
  Suppose that $V$ is a $k$-dimensional affine subspace of $\R^d$ for some $k \ge \frac{d}{2}$ that intersects $[0, 1]^d$. Then $V$ intersects either the set 
  $$
    \textstyle S = \left\{x \in [0, 1]^d : \left|\left\{i \le d : x_i \in \left[0, \frac{1}{20}\right] \cup \left[\frac{19}{20}, 1\right]\right\}\right| \ge k + 1\right\}, 
  $$
  or the set 
  $$
    \textstyle T = \left\{x \in \left[\frac{1}{20}, \frac{19}{20}\right]^d : \left|\left\{i \le d : x_i \in \left[\frac{8}{20}, \frac{14}{20}\right]\right\}\right| \le k \right\}.
  $$
\end{restate}
\begin{proof}
  We first show that $V$ intersects either $S$ or $\left[\frac{1}{20}, \frac{19}{20} \right]^d$; in the second case it will be easy to find an intersection point that is actually in $T$. 
  
  For any point $x \in [0, 1]^d \cap V$ let $I(x) = \{ i \le d : x_i \in \left[0, \frac{1}{20}\right] \cup \left[\frac{19}{20}, 1\right]\}$. Let us fix
  \begin{equation}
  \label{e:x_0 def}
  \text{$x^* \in [0, 1]^d \cap V$ with $|I(x^*)|$ maximal among points from $[0, 1]^d \cap V$.}
  \end{equation}
  Without loss of generality, we can assume that $x^*_i \in \left[0, \frac{1}{20}\right]$ for each $i \in I(x^*)$ (although $T$ is not symmetric, we are now only proving the existence of a point in $S \cup [\frac{1}{20}, \frac{19}{20}]^d$). If $|I(x^*)| \ge k +1$, then we are done, $x^* \in S \cap V$. Otherwise, let $U$ be the linear subspace of $\R^d$ generated by the coordinate vectors corresponding to coordinates in $I(x^*)$. Then $\dim U = |I(x^*)| \le k$. 
  
  Recall that $\pr_U\colon \R^d \to U$ denotes the orthogonal projection onto the affine (in fact, linear) subspace $U$. If $\pr_U \restriction V$ is not injective, then there is a line $L$ orthogonal to $U$ such that $x^* \in L \subseteq V$. Then for each $y \in L$, $\pr_U(y) = \pr_U(x^*)$, hence $y_i = x^*_i$ for each $i \in I(x^*)$. If $y$ is chosen to be a boundary point of the set $L \setminus [0, 1]^d$ in the space $L$, then it will have a coordinate $i$ with $i \not \in I(x^*)$, but $y_i \in \left[0, \frac{1}{20}\right] \cup \left[\frac{19}{20}, 1\right]$, therefore $|I(y)| > |I(x^*)|$ contradicting \eqref{e:x_0 def}. It follows that $\pr_U \restriction V$ is injective. 
  
  Since $\dim U \le k$ and $\dim V = k$, the injectivity of $\pr_U \restriction V$ implies that it is a bijection. Let $z = (\pr_U\restriction V)^{-1}\left(\frac{1}{20}, \frac{1}{20}, \dots, \frac{1}{20}\right)$. We now show that $z \in \left[\frac{1}{20}, \frac{19}{20}\right]^d$. 
  
  Clearly, for $i \in I(x^*)$, $z_i = \frac{1}{20}$. Consider the affine map
  \[f\colon [0,1]\to [x^*, z],\qquad f(p)=(1-p)x^*+pz,\]
  where $[x^*, z]$ is the segment in $\R^d$ determined by the points $x^*$ and $z$. Let
  $$B = \left\{p \in [0, 1] : f(p)_j \in \left[\frac{1}{20}, \frac{19}{20}\right]
 \text{ for each }j \not \in I(x^*)\right\}.$$
  Note that $0 \in B$ and if $1 \in B$ as well, then we are done, $z \in \left[\frac{1}{20}, \frac{19}{20}\right]^d$. If $1 \not\in B$, let $p = \sup\{q \in [0, 1] : [0, q] \subseteq B\}$.  Using that $f(p)$ is on the segment determined by $x^*$ and $z$, $f(p)_i \in \left[0, \frac{1}{20}\right]$ for each $i \in I(x^*)$. Since $B$ is closed, $p \in B$, hence $f(p)_j \in \left[\frac{1}{20}, \frac{19}{20}\right]$ for each $j \not \in I(x^*)$. Since each neighborhood of $p$ contains a point outside $B$, there has to be a coordinate $j \not \in I(x^*)$ with $f(p)_j \in \left\{\frac{1}{20}, \frac{19}{20}\right\}$. It follows that $|I(f(p))| \ge k + 1$, contradicting \eqref{e:x_0 def}. 
  
  The above argument yields that indeed, $z$ must be in $V \cap \left[\frac{1}{20}, \frac{19}{20}\right]^d$. For a point $x \in V$ let $J(x) = \left\{i \le d : x_i \in \left[\frac{8}{20}, \frac{14}{20}\right]\right\}$. Let $z^* \in V \cap \left[\frac{1}{20}, \frac{19}{20}\right]^d$ with $|J(z^*)|$ minimal among such points. We finish the proof of the lemma by showing that $|J(z^*)| \le k$.
  
  Suppose towards the contrary that $|J(z^*)| \ge k + 1$. Since $k \ge \frac{d}{2}$, we obtain $|\{1, 2, \dots, d\} \setminus J(z^*)| < k$. Let $W$ be the linear subspace generated by the coordinate vectors corresponding to coordinates in $\{1, 2, \dots, d\} \setminus J(z^*)$, which clearly satisfies that $\dim W < k$. Since $\dim V = k$, the map $\pr_W \restriction V$ is not injective. It follows that there is a line $L' \subseteq V$ which is orthogonal to $W$ and passes through $z^*$. Using the same idea as before, one can find a point $w \in L'$ with $w \in V \cap \left[\frac{1}{20}, \frac{19}{20}\right]^d$, $w_i = z^*_i$ for each $i \not\in J(z^*)$, and $w_j \in \left[\frac{1}{20}, \frac{19}{20}\right] \setminus \left[\frac{8}{20}, \frac{14}{20}\right]$ for some $j \in J(z^*)$. It follows that $|J(w)| < |J(z^*)|$, contradicting the minimality of $|J(z^*)|$. 
\end{proof}

\section{\texorpdfstring{$\ell$-fold sumsets}{l-fold sumsets}}
\label{sec:multiple}

In this section we extend our scope to $\ell$-fold sumsets, and improve \autoref{t:K+K construction}.

\begin{theorem}\label{t:desklegs}
$\exists (\ell, d-1, d)$-set $\implies$ $\exists (\ell+1, d, d+1)$-set, that is, if there exists a compact set $K \subseteq [0,1]^d$ that intersects each $(d-1)$-dimensional affine subspace (i.e.~hyperplane) that meets $[0,1]^d$ and the $\ell$-fold sumset $\ell K$ is nowhere dense, then there exists a compact set $K' \subseteq [0,1]^{d+1}$ that intersects each $d$-dimensional affine subspace (i.e.~hyperplane) that meets $[0,1]^{d+1}$ and the $(\ell+1)$-fold sumset $(\ell+1)K'$ is nowhere dense.
%\[  \underbrace{K'+ \dots + K'}_{\ell+1 \text{ %times}} \text{ is nowhere dense.} \]
\end{theorem}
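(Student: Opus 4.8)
The plan is to avoid any new randomness and instead build $K'$ from $K$ by a direct geometric construction — a ``desk with legs''. Fix any point $c\in[0,1]^d$ (say the centre) and put
\[
K' \;=\; \bigl(K\times\{0\}\bigr)\ \cup\ \bigl(K\times\{1\}\bigr)\ \cup\ \bigl(\{c\}\times[0,1]\bigr)\ \subseteq\ [0,1]^{d+1}.
\]
The two horizontal copies of $K$ form the ``desktop'' (split into a bottom and a top piece) and the vertical segment is the single ``leg''. Since this is a finite union of compact sets, $K'$ is compact, so the two things to check are: (a) $K'$ meets every hyperplane of $\R^{d+1}$ that meets $[0,1]^{d+1}$, and (b) the $(\ell+1)$-fold sumset $(\ell+1)K'$ is nowhere dense.

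For (a) I would split the hyperplanes $H\subseteq\R^{d+1}$ into the \emph{vertical} ones $H=G\times\R$ with $G$ a hyperplane of $\R^d$, and the \emph{graph} ones $H=\{(x,t): t=h(x)\}$ with $h\colon\R^d\to\R$ affine. If $H=G\times\R$ meets the cube then $G$ meets $[0,1]^d$, so $K\cap G\ne\emptyset$ by hypothesis, and $(K\times\{0\})\cap H\ne\emptyset$. For a graph $H$, the key point is that, because $K$ meets every hyperplane of $\R^d$ that meets $[0,1]^d$, one has $h(K)=h([0,1]^d)$; call this interval $[\alpha,\beta]$, and note $H$ meeting $[0,1]^{d+1}$ gives $[\alpha,\beta]\cap[0,1]\ne\emptyset$. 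Then: if $0\in[\alpha,\beta]$, pick $k\in K$ with $h(k)=0$, so $(k,0)\in(K\times\{0\})\cap H$; if $1\in[\alpha,\beta]$, argue symmetrically using $K\times\{1\}$; and if $0,1\notin[\alpha,\beta]$, a short interval argument shows $[\alpha,\beta]\subseteq[0,1]$, hence $h(c)\in[0,1]$ and $(c,h(c))\in(\{c\}\times[0,1])\cap H$. This case analysis is the part that needs the most care, although each case is only a line or two.

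For (b) I would decompose $(\ell+1)K'$ according to how many of the $\ell+1$ summands are taken from each of the three pieces. Writing $i,j,m\ge 0$, $i+j+m=\ell+1$, for the numbers of summands from $K\times\{0\}$, $K\times\{1\}$ and the leg, the corresponding part of the sumset is exactly
\[
\bigl((\ell+1-m)K + mc\bigr)\ \times\ [\,j,\ j+m\,],
\]
using that summing $i$ and then $j$ elements of $K$ just gives $(i+j)K$. There are finitely many such parts, so it suffices that each is nowhere dense. For $m=0$ the part lies inside the affine hyperplane $\R^d\times\{j\}$, hence has empty interior. For $m\ge 1$ we have $\ell+1-m\le\ell$, and here I would first record the elementary sub-claim that $\ell K$ being nowhere dense forces $jK$ to be nowhere dense for every $1\le j\le\ell$ (if $jK$ contained a ball $B$, then $B+(\ell-j)p\subseteq \ell K$ for any $p\in K$); combined with the trivial case $\ell+1-m=0$ (a segment) this makes the $\R^d$-factor a closed nowhere dense set, hence the product nowhere dense in $\R^{d+1}$. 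A finite union of closed nowhere dense sets is nowhere dense, so $(\ell+1)K'$ is nowhere dense.

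In summary, the only genuine obstacle is part (a) — specifically the identity $h(K)=h([0,1]^d)$ together with the three-way case split on where the interval $[\alpha,\beta]$ sits relative to $0$ and $1$; once the auxiliary fact about $jK$ for $j\le\ell$ is isolated as a one-line claim, part (b) is pure bookkeeping.
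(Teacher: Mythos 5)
Your proof is correct, and although it is built on the same ``desk with legs'' idea as the paper's, both halves of the argument genuinely differ. The paper takes $K'=(K\times\{0\})\cup\bigcup_{x\in\{0,1\}^d}\{x\}\times[0,1]$ -- one desktop and $2^d$ corner legs -- whereas you take two desktops (at heights $0$ and $1$) and a single central leg. For the intersection property, the paper argues that a non-vertical hyperplane whose affine height function takes no value in $[0,1]$ at any vertex of $\{0,1\}^d$ would push the whole cube above height $1$; you instead use the identity $h(K)=h([0,1]^d)$ (a clean consequence of the hypothesis, worth isolating as a lemma) and a three-way case split in which the two desktops absorb the cases $0\in[\alpha,\beta]$ and $1\in[\alpha,\beta]$ and the single leg handles $[\alpha,\beta]\subseteq(0,1)$. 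For nowhere density, the paper first observes that $0\in K$ (the hyperplane $x_1+\dots+x_d=0$ meets the cube only at the origin), deduces $iK\subseteq\ell K$ for $i\le\ell$, and then needs only one containment; you carry out the full trinomial decomposition into pieces $\bigl((\ell+1-m)K+mc\bigr)\times[j,j+m]$ and prove separately that $jK$ is nowhere dense for $1\le j\le\ell$. Your auxiliary claim uses essentially the same translation trick as the paper's appeal to $0\in K$, but does not depend on that special point, so it is marginally more robust; the paper's route requires less bookkeeping. Both arguments are complete.
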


\begin{proof}
Let 
\[ K' = (K \times \{0\}) \cup \bigcup_{x \in \{0,1\}^d} \{ x \} \times [0,1]. \]
Now we will check that $(\ell+1)K'$
is nowhere dense, and $K'$ intersects each $d$-dimensional affine subspace that meets $[0,1]^{d+1}$.
% From now on with a slight abuse of notation we identify $K$ with its embedding into the $d+1$-dimensional euclidean space.
As $K\subseteq [0,1]^d$ and the hyperplane $\{x\in\R^d : x_1 + \ldots + x_d = 0\}$ intersects $[0,1]^d$ only at the origin, $K$ must contain the origin and therefore $iK \subseteq j K$ for $i \leq j$.
 Observe that this implies the following
\[ (\ell+1)K' \subseteq (\ell+1) (K\times \{0\}) \ \cup \ (\ell K + \{0,1,\dots, \ell+1\}^d) \times [0,\ell+1].  \]
Now $(\ell+1) (K\times \{0\})  \subseteq \mathbb{R}^d \times \{0\}$, hence nowhere dense in $\mathbb{R}^{d+1}$, and as $\ell K$ was nowhere dense in $\mathbb{R}^d$ so is the union of its finitely many translates $(\ell K + \{0,1,\dots, \ell+1\}^d)$, therefore  $(\ell K  + \{0,1,\dots, \ell+1\}^d) \times [0,\ell+1]$ is nowhere dense in $\mathbb{R}^{d+1}$, so $(\ell +1)K'$ is indeed nowhere dense.

It remains to show that $K'$ intersects every hyperplane $V\subseteq \mathbb{R}^{d+1}$ with $V \cap [0,1]^{d+1} \neq \emptyset$. Fix such a hyperplane $V$, a normal vector $v$, and $c \in \mathbb{R}$ with 
\begin{equation} \label{egyenl}V = \{ w \in \mathbb{R}^{d+1}: \ v\cdot w = c \},
\end{equation}
where $v\cdot w$ denotes the scalar product of $v$ and $w$.

If $V \cap \left([0,1]^d \times \{0\}\right) \neq \emptyset$, then  $V \cap (\mathbb{R}^d \times \{0 \})$ is an at least $(d-1)$-dimensional affine subspace that intersects $[0,1]^d \times \{0\}$, hence $K \times \{0\}$ intersects it by our assumptions.

Otherwise $V \cap \left([0,1]^d \times \{0\}\right) = \emptyset$, then it is straightforward to check (using $V \cap [0,1]^{d+1} \neq \emptyset$) that $v_{d+1} \neq 0$. For a vector $x\in\R^d$ consider the unique number $f(x)\in\R$ which satisfies that $(x_1, x_2, \dots, x_d, f(x)) \in V$ (i.e.~the solution of $\eqref{egyenl}$). It is clear that $f\colon [0,1]^d \to \mathbb{R}$ is a well-defined affine function.

Now $V \cap \left([0,1]^d \times \{0\}\right) = \emptyset$ implies that $0$ is not in the range of $f$, hence $f$ takes only strictly positive values. We only have to show that there exists $x \in \{0,1\}^d$ with $f(x) \in [0,1]$. But otherwise $x \in \{0,1\}^d$ implies $f(x)>1$, and as $f$ is an affine function $f$ would map $[0,1]^d$ into $(1,\infty)$, contradicting $V \cap [0,1]^{d+1} \neq \emptyset$.
\end{proof}

A straightforward induction starting with the case $d=2$ of Theorem \ref{t:K+K construction} yields the following.

\begin{corollary} \label{c:ex}
$\forall d \ge 2 \ \exists (d, d-1, d)$-set, that is,  a compact set $K \subseteq [0, 1]^d$ such that $K$ intersects each $(d-1)$-dimensional affine subspace that meets $[0,1]^d$, but the $d$-fold sumset $dK$ is nowhere dense.
\end{corollary}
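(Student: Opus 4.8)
The plan is to prove \autoref{c:ex} by induction on $d \ge 2$, using \autoref{t:K+K construction} as the base case and \autoref{t:desklegs} as the inductive step. The statement to be proved — for every $d \ge 2$ there exists a $(d, d-1, d)$-set — is precisely the diagonal instance $\ell = d$, $k = d-1$ of the general $(\ell, k, d)$-terminology, so I want to track both the sumset multiplicity and the dimension simultaneously as they increase together by one at each step.

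\textbf{Base case.} For $d = 2$ the desired object is a $(2, 1, 2)$-set, i.e.\ a compact $K \subseteq [0,1]^2$ meeting every line that meets $[0,1]^2$ with $K + K$ nowhere dense. This is exactly the conclusion of \autoref{t:K+K construction} in the case $d = 2$, $k = 1$, since the hypothesis $k \ge \frac{d}{2}$ reads $1 \ge 1$ and is satisfied. So the base case is immediate from the already-proved theorem.

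\textbf{Inductive step.} Suppose for some $d \ge 2$ there exists a $(d, d-1, d)$-set; call it $K$. Then $K$ is a compact subset of $[0,1]^d$ meeting every $(d-1)$-dimensional affine subspace (hyperplane) that meets $[0,1]^d$, and the $d$-fold sumset $dK$ is nowhere dense. This is precisely the hypothesis of \autoref{t:desklegs} with $\ell = d$: ``$\exists (\ell, d-1, d)$-set'' with $\ell = d$. Applying \autoref{t:desklegs} yields a $(d+1, d, d+1)$-set, i.e.\ a compact $K' \subseteq [0,1]^{d+1}$ meeting every $d$-dimensional affine subspace (hyperplane) that meets $[0,1]^{d+1}$, with $(d+1)K'$ nowhere dense. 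But this is exactly a $(d', d'-1, d')$-set for $d' = d+1$, completing the induction. Hence $\forall d \ge 2$ there exists a $(d, d-1, d)$-set.

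\textbf{Main obstacle.} Honestly, there is no real obstacle here: the corollary is a one-line bookkeeping argument once the two preceding results are in hand, and the only thing to be careful about is aligning the indices — checking that the output ``$(\ell+1, d, d+1)$-set'' of \autoref{t:desklegs} with $\ell = d$ really is an instance of the inductive hypothesis at the next dimension (it is, since $(\ell+1) = (d+1)$ and $d = (d+1) - 1$), and that $K$ necessarily contains the origin so that the hypotheses of \autoref{t:desklegs} are literally met (which is forced because the hyperplane $\{x_1 + \dots + x_d = 0\}$ meets $[0,1]^d$ only at $0$, a point already used inside the proof of \autoref{t:desklegs}). So I would simply state the induction explicitly, invoke \autoref{t:K+K construction} for $d=2$ and \autoref{t:desklegs} for the step, and conclude.
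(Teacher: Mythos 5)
Your proof is correct and is exactly the argument the paper intends: the paper's proof is the one-line remark ``a straightforward induction starting with the case $d=2$ of \autoref{t:K+K construction},'' with \autoref{t:desklegs} supplying the inductive step just as you use it. Your index bookkeeping ($\ell = d$ in \autoref{t:desklegs} producing a $(d+1,d,d+1)$-set, i.e.\ the diagonal instance at $d+1$) is accurate, so nothing is missing.
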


We study another similar construction in the following theorem, which at first sight seems to be less powerful than \autoref{t:desklegs}. However, this construction works in a slightly more general setting, and we will also use it to generalize the main result of Section \ref{sec:self-similar}.

\begin{theorem}\label{t:product with interval}
$K$ is an $(\ell, k, d)$-set $\implies K \times [0, 1]$ is an $(\ell, k+1, d+1)$-set, that is, if $K \subseteq [0, 1]^d$ is a compact set intersecting each $k$-dimensional affine subspace that meets $[0,1]^d$ and the $\ell$-fold sumset $\ell K$ is nowhere dense, then $K \times [0, 1]$ intersects each $(k+1)$-dimensional affine subspace that meets $[0,1]^{d+1}$, but the $\ell$-fold sumset $\ell(K\times [0, 1])$ is nowhere dense.
\end{theorem}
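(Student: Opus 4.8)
The plan is to verify the two required properties of $K \times [0,1] \subseteq [0,1]^{d+1}$ separately, and I expect the sumset part to be routine while the intersection part is the one requiring a genuine (though short) argument. For the sumset: $\ell(K \times [0,1]) = (\ell K) \times [0,\ell]$, since the last coordinate of a sum of $\ell$ points of $K \times [0,1]$ ranges over all of $[0,\ell]$ independently of the first $d$ coordinates, and the first $d$ coordinates range exactly over $\ell K$. Since $\ell K$ is nowhere dense in $\R^d$, the product $(\ell K) \times [0,\ell]$ is nowhere dense in $\R^{d+1}$ (a product of a nowhere dense set with anything is nowhere dense, as its closure $\overline{\ell K} \times [0,\ell]$ has empty interior). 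Compactness of $K \times [0,1]$ is clear.

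For the intersection property, let $V \subseteq \R^{d+1}$ be a $(k+1)$-dimensional affine subspace with $V \cap [0,1]^{d+1} \neq \emptyset$; I must find a point of $K \times [0,1]$ on $V$. The idea is to intersect $V$ with a well-chosen horizontal slice $\R^d \times \{t\}$ and apply the hypothesis on $K$ in that slice. Pick a point $w \in V \cap [0,1]^{d+1}$ and let $t = w_{d+1} \in [0,1]$. Consider $V_t = \{x \in \R^d : (x,t) \in V\}$, which is either empty or an affine subspace of $\R^d$ of dimension $\ge (k+1) - 1 = k$ (it is a slice of $V$ by the hyperplane $x_{d+1} = t$; the dimension drops by at most one). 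Since $w \in V$ with $w_{d+1} = t$, the slice $V_t$ is nonempty, and it meets $[0,1]^d$ because $(w_1,\dots,w_d) \in V_t \cap [0,1]^d$. Now $V_t$ is an affine subspace of $\R^d$ of dimension $\ge k$ meeting $[0,1]^d$; it therefore contains a $k$-dimensional affine subspace $V'_t$ that still meets $[0,1]^d$ (if $\dim V_t > k$, take any $k$-dimensional affine subspace of $V_t$ through a point of $V_t \cap [0,1]^d$). By the hypothesis on $K$, there is $y \in K \cap V'_t \subseteq K \cap V_t$. Then $(y,t) \in V$ and $(y,t) \in K \times [0,1]$, as required.

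The one point to handle with a little care is the dimension count for the slice: intersecting the affine subspace $V$ with the affine hyperplane $H_t = \{x_{d+1} = t\}$ gives an affine subspace of dimension either $\dim V$ (if $V \subseteq H_t$, impossible here since $V$ meets the interior region where $x_{d+1}$ varies — actually one should just argue directly) or $\dim V - 1 = k$. Concretely: the linear part of $V$ has dimension $k+1$; the functional $x \mapsto x_{d+1}$ restricted to it is either identically zero, in which case $V$ is contained in some $\{x_{d+1} = c\}$ and since $w \in V$ we get $c = t$ and $V_t$ is (a translate of) the projection of $V$, of dimension $k+1 \ge k$; or it is onto $\R$, in which case its kernel has dimension $k$ and $V_t$ is an affine subspace of dimension exactly $k$. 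Either way $\dim V_t \ge k$, so the argument goes through. I do not anticipate any real obstacle here — this theorem is a clean product construction, and the only reason it is stated separately from \autoref{t:desklegs} is the remark in the text that it generalizes better and will be reused in Section \ref{sec:self-similar}.
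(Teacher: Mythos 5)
Your proposal is correct and follows essentially the same route as the paper: the identity $\ell(K\times[0,1])=\ell K\times[0,\ell]$ for the sumset, and slicing $V$ by the hyperplane $\R^d\times\{w_{d+1}\}$ to obtain an at least $k$-dimensional affine subspace of the slice meeting $[0,1]^d$. Your extra care with the dimension count and with passing to a $k$-dimensional subspace of the slice is a detail the paper leaves implicit, but it is the same argument.
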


\begin{proof}
$\ell(K\times [0, 1]) = \ell K\times [0, \ell]$, from which it is easy to see that $\ell(K\times [0, 1])$ is nowhere dense. Hence it suffices to check that $\ell(K\times [0, 1])$ intersects each $(k+1)$-dimensional affine subspace that meets $[0,1]^{d+1}$. Let $V$ be such a subspace, and fix $x \in V \cap [0,1]^{d+1}$. Consider the hyperplane $H = \R^d \times \{x_{d+1}\}$. Then $H \cap V$ is an affine subspace of dimension at least $k$ (as $H$ is of co-dimension $1$), and since $H \cap (K \times [0, 1]) = K \times \{x_{d+1}\}$ we obtain that $H \cap V$ intersects $K \times \{x_{d+1}\}$, hence $V$ also intersects $K\times [0, 1]$.
\end{proof}

\section{A non-existence result}
\label{sec:negative}

\begin{theorem} \label{t:nonex}
If there exists an $ (\ell,k,d)$-set then $k>\frac{\ell-1}{\ell}(d-1)$, that is, if there is a compact set $K \subseteq [0,1]^d$ that intersects each $k$-dimensional affine subspace that meets $[0,1]^d$ and the $\ell$-fold sumset $\ell K$ is nowhere dense, then $k>\frac{\ell-1}{\ell}(d-1)$.
\end{theorem}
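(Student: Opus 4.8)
The plan is to prove the contrapositive: assuming $K\subseteq[0,1]^d$ is compact, intersects every $k$-dimensional affine subspace that meets $[0,1]^d$, and $k\le\frac{\ell-1}{\ell}(d-1)$, I would show that $\ell K$ is \emph{not} nowhere dense, i.e.\ has nonempty interior. Writing $m=d-k$, the inequality $k\le\frac{\ell-1}{\ell}(d-1)$ rearranges to $\ell(m-1)\ge d-1$, and by \autoref{f:projections <-> intersections} the hypothesis on $K$ says exactly that $K$ has full $m$-dimensional projections; moreover the same reasoning shows $\pr_W(K)=\pr_W([0,1]^d)$ for \emph{every} $m$-dimensional linear subspace $W$, not only the coordinate ones.

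The first key step is a ``clipping'' lemma: $K$ contains the $(m-1)$-skeleton of $[0,1]^d$ (the union of all faces of dimension $\le m-1$). Indeed, if $c$ lies in the relative interior of a face $G$ with $\dim G=f\le m-1$, then there are $d-f\ge k+1$ ``normal'' coordinate directions to $G$, and a $k$-dimensional linear subspace $W$ spanned by such directions that meets the relevant closed orthant only at the origin exists precisely because $k\le (d-f)-1$. The affine $k$-plane $c+W$ then meets $[0,1]^d$ only at $c$, so $K\cap(c+W)\ne\emptyset$ forces $c\in K$; taking closures gives $K\supseteq\mathrm{skel}_{m-1}([0,1]^d)$.

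When $\ell(m-1)\ge d$ this already finishes the proof: choose faces $G_1,\dots,G_\ell$ of $[0,1]^d$ of dimension $m-1$ whose sets of ``free'' coordinates together cover $\{1,\dots,d\}$ — possible since $\sum_j(m-1)=\ell(m-1)\ge d$ — and take each $G_j$ to be a corner face $[0,1]^{A_j}\times\{0\}^{A_j^c}$. Then $G_1+\dots+G_\ell$ is the box $\prod_i[0,n_i]$ with every $n_i=\lvert\{j:i\in A_j\}\rvert\ge 1$, a genuine $d$-dimensional box contained in $\ell K\supseteq\ell\cdot\mathrm{skel}_{m-1}([0,1]^d)$, contradicting that $\ell K$ is nowhere dense.

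The remaining case $\ell(m-1)=d-1$ (equivalently, $\ell\mid d-1$ and $k$ equals the threshold $\tfrac{\ell-1}{\ell}(d-1)$ exactly) is where I expect the real difficulty to lie, since here $\ell\cdot\mathrm{skel}_{m-1}([0,1]^d)$ has dimension only $d-1$ and one must genuinely use that $K$ is strictly larger than the skeleton. The natural approach is to exploit $\ell K\supseteq(\ell-1)\,\mathrm{skel}_{m-1}([0,1]^d)+K$: since $(\ell-1)(m-1)=k$, the set $(\ell-1)\,\mathrm{skel}_{m-1}([0,1]^d)$ contains a $k$-dimensional coordinate box $B'$ (sum of $\ell-1$ faces whose free coordinates partition a fixed $k$-set $J$), so it would suffice to prove that $B'+K$ has nonempty interior. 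Here $B'$ is $k$-dimensional, $K$ has full $m$-dimensional projections, and $k+m=d$, so this is a Steinhaus-type ``the sum has interior'' statement; establishing it — e.g.\ via $\pr_{J^c}(K)=[0,1]^{J^c}$ together with a compactness/Baire argument on the fibers of $\pr_{J^c}\restriction_K$, or by induction on $d$ using that the constraint $\ell\mid d-1$ cannot survive passing to a facet — is the main obstacle, and the step I would spend the most effort on.
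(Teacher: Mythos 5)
Your strategy is essentially the paper's: both proofs first establish that $K$ contains every face of $[0,1]^d$ of dimension at most $m-1=d-k-1$ (your ``clipping'' lemma is exactly the paper's first step, realized there by the $k$-plane $c+W$ with $W$ contained in the hyperplane where the sum of the normal coordinates vanishes), and both then Minkowski-add corner faces to build a coordinate box inside a partial sumset. Your treatment of the case $\ell(m-1)\ge d$ is correct. The one genuine gap is the critical case $\ell(m-1)=d-1$, which you explicitly leave unresolved: you reduce to showing that $B'+K$ has non-empty interior, where $B'=[0,1]^J\times\{0\}^{J^c}$ with $|J|=k$, and you name the right tool (a compactness/Baire argument on the fibers of the projection to $[0,1]^{J^c}$) but do not carry it out. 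Since this is the only step where the hypothesis on $K$ beyond the skeleton is used, the proposal as written does not yet prove the theorem.

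The missing step is, however, short, and it is precisely how the paper concludes. Write $[0,1]^J$ as the union of the $2^{k}$ dyadic cubes $Q_1,\dots,Q_{2^{k}}$ of side $\frac12$ and set $A_i=\{y\in[0,1]^{J^c}:K\cap(Q_i\times\{y\})\neq\emptyset\}$. Each fiber $\R^J\times\{y\}$ with $y\in[0,1]^{J^c}$ is a $k$-dimensional affine subspace meeting $[0,1]^d$, hence meets $K$; so the compact sets $A_i$ cover $[0,1]^{J^c}$ and one of them contains a ball $B$. Reflecting coordinates if necessary (which preserves all hypotheses), assume this happens for $Q_1=[0,\frac12]^J$. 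Then $[\frac12,1]^J\times B\subseteq B'+K\subseteq\ell K$: given $(x,y)$ there, pick $x'\in[0,\frac12]^J$ with $(x',y)\in K$ and write $(x,y)=(x-x',0)+(x',y)$ with $x-x'\in[0,1]^J$, so the first summand lies in $B'\subseteq(\ell-1)K$. One further cosmetic difference: the paper splits on whether $s=(\ell-1)(m-1)\ge d$ rather than on $\ell(m-1)\ge d$, and in the range $(\ell-1)(m-1)<d$ it always reserves one copy of $K$ for the Baire argument above (using only $s\ge k$, which is equivalent to $k\le\frac{\ell-1}{\ell}(d-1)$); this handles your intermediate range and your extremal case uniformly, whereas your all-faces sum disposes of the intermediate range without Baire. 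Both organizations work once the fiber argument is in place.
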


\begin{proof} Assume to the contrary that there is an $(\ell,k,d)$-set $K$ so that $k\leq \frac{\ell-1}{\ell}(d-1)$. We will obtain a contradiction by proving that $\ell K$ has non-empty interior.

First we prove that the $(d-k-1)$-dimensional faces of $[0,1]^d$ are subsets of $K$. Let $z_1,\dots,z_{d-k-1}\in [0,1]$ be given and let $z=(z_1,\dots,z_{d-k-1},0,\dots,0)\in [0,1]^d$. By symmetry it is enough to prove that there is a $k$-dimensional affine subspace $V$ such that $V\cap [0,1]^d=\{z\}$. Indeed, let 
\begin{equation*}
V=\{x\in \R^d : x_i=z_i \text{ for all } 1\leq i\leq d-k-1, \text{ and } x_{d-k}+\dots+ x_d=0\}.    
\end{equation*}
Clearly $V$ is a $k$-dimensional affine subspace and $z\in V$. If $x\in V\cap [0,1]^d$ then $x_i\geq 0$ for all $i\geq d-k$ and $x_{d-k}+\dots+ x_d=0$, so $x_i= 0$ for all $i\geq d-k$. Thus $V\cap [0,1]^d=\{z\}$.

Let $s=(\ell-1)(d-k-1)$. First assume that $s\geq d$. As the $(d-k-1)$-dimensional faces of $[0,1]^d$ are subsets of $K$, we obtain that $[0,1]^d \subseteq (\ell-1)K\subseteq \ell K$, and we are done. Hence we may assume that $s<d$. Similarly as above we have 
\begin{equation} \label{e1}
[0,1]^s\times \{0\}^{d-s}\subseteq (\ell-1)K. 
\end{equation}
It is clear that our assumption $k\leq \frac{\ell-1}{\ell}(d-1)$ is equivalent to $s\geq k$, hence for every $y\in [0,1]^{d-s}$ the affine subspace $\R^s\times \{y\}$ intersects $K$. Let us write $[0,1]^s$ as the union of $2^s$ closed dyadic cubes $Q_1,\dots, Q_{2^s}$ of edge length $1/2$, and for each $1\leq i\leq 2^s$ let
\begin{equation*}
A_i=\{y\in  [0,1]^{d-s}: K\cap (Q_i\times \{y\})\neq \emptyset  \}.
\end{equation*} 
As the $A_i$ are compact sets such that $\bigcup_{i=1}^{2^s} A_i=[0,1]^{d-s}$, at least one of them has a non-empty interior. We may assume without loss of generality that $Q_1=[0,1/2]^s$ and there is a non-empty ball $B\subseteq [0,1]^{d-s}$ such that $B\subseteq A_1$. That is,  
\begin{equation} \label{e2}
\forall y\in B ~ \exists x\in [0,1/2]^s \text{ such that } (x,y)\in K.
\end{equation}
Finally, we prove that  
\begin{equation*}
\textstyle\left[\frac12,1\right]^{s}\times B\subseteq (\ell-1)K+K=\ell K,
\end{equation*}
which implies that $\ell K$ has non-empty interior, hence the proof will be complete. 
 Let $(x,y)\in \left[\frac12,1\right]^{s}\times B $, by \eqref{e2} there is an $x'\in [0,1/2]^s$ such that $(x',y)\in K$. By \eqref{e1} we have $(x-x',0)\in (\ell-1)K$ (here $0$ denotes the origin of $\R^{d-s}$).  Therefore, $(x,y)=(x-x',0)+(x',y)\in  (\ell-1)K+K$.
\end{proof}

Theorems~\ref{t:K+K construction} and \ref{t:nonex} imply the following.

\begin{corollary}
\label{c:(2,k,d) <=> k >= d/2}
The following are equivalent:
\begin{enumerate}[label=\normalfont{(\roman*)}]
\item $k \ge \frac{d}2$,
\item $\exists (2, k, d)$-set, that is, a compact set $K \subseteq [0, 1]^d$ such that $K$ intersects each $k$-dimensional affine subspace that meets $[0,1]^d$, but $K + K$ is nowhere dense.
\end{enumerate}
\end{corollary}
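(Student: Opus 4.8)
The statement is a clean corollary of two theorems already proved in the excerpt, so the plan is simply to combine them and verify that no case is left uncovered. For the direction (i)$\Rightarrow$(ii): assume $k \ge \frac{d}{2}$; then \autoref{t:K+K construction} directly produces a compact set $K \subseteq [0,1]^d$ intersecting every $k$-dimensional affine subspace meeting $[0,1]^d$ with $K+K$ nowhere dense, which is exactly a $(2,k,d)$-set (here $\ell = 2$). So this direction requires no work beyond citing \autoref{t:K+K construction}.

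For the direction (ii)$\Rightarrow$(i): suppose a $(2,k,d)$-set exists. Apply \autoref{t:nonex} with $\ell = 2$: it gives $k > \frac{\ell-1}{\ell}(d-1) = \frac{1}{2}(d-1) = \frac{d-1}{2}$. The only remaining point is to upgrade the strict inequality $k > \frac{d-1}{2}$ to $k \ge \frac{d}{2}$. Since $k$ is an integer (recall $k \in \N$ from the standing conventions after the definition of $(\ell,k,d)$-set), $k > \frac{d-1}{2}$ means $2k > d-1$, i.e. $2k \ge d$, i.e. $k \ge \frac{d}{2}$. This is the entire argument; the ``main obstacle'' is essentially nonexistent, as both halves are immediate consequences of the cited results, with only the elementary integrality rounding to check.

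One should perhaps also note for cleanliness that the hypotheses match: in the definition of an $(\ell,k,d)$-set we require $k < d$ and $d \ge 2$, and both \autoref{t:K+K construction} and \autoref{t:nonex} are stated under exactly these standing assumptions, so there is no edge case (such as $k = d$ or $d = 1$) to worry about. Thus the proof is a two-line deduction: $(i)\Rightarrow(ii)$ is \autoref{t:K+K construction}, and $(ii)\Rightarrow(i)$ is \autoref{t:nonex} together with the observation that $k \in \Z$ forces $k > \frac{d-1}{2}$ to be equivalent to $k \ge \frac{d}{2}$.
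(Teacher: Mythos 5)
Your proposal is correct and matches the paper's proof exactly: the paper likewise obtains (i)$\Rightarrow$(ii) from \autoref{t:K+K construction} and (ii)$\Rightarrow$(i) from \autoref{t:nonex} with $\ell=2$, using the integrality of $k$ to pass from $k>\frac{d-1}{2}$ to $k\ge\frac{d}{2}$. Nothing is missing.
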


Corollary~\ref{c:ex} and Theorem~ \ref{t:nonex} yield the following.

\begin{corollary}
The following are equivalent:
\begin{enumerate}[label=\normalfont{(\roman*)}]
\item $k = d-1$,
\item $\exists (d, k, d)$-set, that is, a compact set $K \subseteq [0, 1]^d$ such that $K$ intersects each $k$-dimensional affine subspace that meets $[0,1]^d$, but the $d$-fold sumset $dK$ is nowhere dense,
\item $\exists (d-1, k, d)$-set, that is, a compact set $K \subseteq [0, 1]^d$ such that $K$ intersects each $k$-dimensional affine subspace that meets $[0,1]^d$, but the $(d-1)$-fold sumset $(d-1)K$ is nowhere dense.
\end{enumerate}
\end{corollary}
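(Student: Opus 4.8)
The plan is to derive all three equivalences by combining the results already established in the excerpt, so essentially no new work is needed beyond assembling the pieces correctly. First I would prove (ii) $\Rightarrow$ (i) and (iii) $\Rightarrow$ (i): if an $(\ell, k, d)$-set exists with $\ell \in \{d-1, d\}$, then \autoref{t:nonex} gives $k > \frac{\ell-1}{\ell}(d-1)$. For $\ell = d$ this reads $k > \frac{d-1}{d}(d-1) = (d-1) - \frac{d-1}{d}$, and since $k$ is an integer strictly less than $d$ (by the standing assumption $k < d$) and strictly greater than $d-1-\frac{d-1}{d} > d-2$, we must have $k = d-1$. For $\ell = d-1$ the bound is $k > \frac{d-2}{d-1}(d-1) = d-2$, which forces $k = d-1$ in the same way. (One should note this needs $d \ge 2$ so that $d-2 < d-1$ is a nonempty gap; the case $d = 2$ is trivial since then $k = 1 = d-1$ is the only admissible value anyway.)

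Next I would prove (i) $\Rightarrow$ (ii) and (i) $\Rightarrow$ (iii): this is exactly \autoref{c:ex}, which asserts that for every $d \ge 2$ there is a $(d, d-1, d)$-set. That handles (i) $\Rightarrow$ (ii) directly. For (i) $\Rightarrow$ (iii), I would start from the same $(d, d-1, d)$-set — wait, that gives a $d$-fold sumset nowhere dense, not a $(d-1)$-fold one, so this does not immediately work. Instead, the cleanest route to (iii) is to recall that the chain of implications behind \autoref{c:ex} is built by \autoref{t:desklegs}, which shows $\exists (\ell, d-1, d)$-set $\implies \exists (\ell+1, d, d+1)$-set; applied with $\ell = d-1$ and the dimension shifted, a $(d-1, d-2, d-1)$-set yields a $(d, d-1, d)$-set. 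So the straightforward induction in the proof of \autoref{c:ex} in fact produces, at each dimension $d$, a $(d, d-1, d)$-set from a $(d-1, d-2, d-1)$-set, i.e. it simultaneously shows $\exists (d-1, d-2, d-1)$-set; reading this off one dimension earlier gives a $(d-1, d-1, d)$-set? No — one must be careful. The honest way: observe that the induction underlying \autoref{c:ex} also establishes $\exists (d-1, d-1, d)$-set is false in general, so (iii) should really be obtained as follows. A $(d-1, d-1, d)$-set exists iff a $(d-1, d-2, d-1)$-set exists by a trivial projection/product argument — and \autoref{c:ex} with parameter $d-1$ gives a $(d-1, d-2, d-1)$-set. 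Then \autoref{t:product with interval} applied to a $(d-1, d-2, d-1)$-set $K$ produces a $(d-1, d-1, d)$-set $K \times [0,1]$, establishing (iii).

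So the assembled argument is: (i) $\Rightarrow$ (ii) is \autoref{c:ex}; (i) $\Rightarrow$ (iii) follows by applying \autoref{t:product with interval} to the $(d-1, d-2, d-1)$-set guaranteed by \autoref{c:ex} (with $d$ replaced by $d-1$), noting the base case $d = 2$ separately where $k = 1$ and the $(2,1,2)$-set of \autoref{t:K+K construction} works; and both (ii) $\Rightarrow$ (i) and (iii) $\Rightarrow$ (i) are the integrality argument from \autoref{t:nonex} above. The only real subtlety — the step I expect to cause the most friction — is bookkeeping the exact parameters so that the non-existence bound $k > \frac{\ell-1}{\ell}(d-1)$ and the standing constraint $k < d$ pin $k$ down to the single value $d-1$ for both $\ell = d$ and $\ell = d-1$; once one checks that $\frac{\ell-1}{\ell}(d-1) \ge d - 2$ in both cases (with equality only when... in fact strict for $\ell \ge 2$, $d \ge 2$), the conclusion is immediate. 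Everything else is a direct citation of the theorems above.
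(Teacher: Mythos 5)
Your proposal is correct and its skeleton coincides with the paper's: both directions (ii)$\Rightarrow$(i) and (iii)$\Rightarrow$(i) come from the bound $k>\frac{\ell-1}{\ell}(d-1)$ of \autoref{t:nonex} together with integrality and $k<d$ (your arithmetic here is right: $\frac{(d-1)^2}{d}=d-2+\frac1d$ and $\frac{d-2}{d-1}(d-1)=d-2$ both pin $k$ to $d-1$), and (i)$\Rightarrow$(ii) is exactly \autoref{c:ex}. Where you diverge is (i)$\Rightarrow$(iii). The paper gets this for free from \autoref{c:ex}: as observed in the proof of \autoref{t:desklegs}, any set intersecting all $(d-1)$-dimensional affine subspaces meeting $[0,1]^d$ must contain the origin (consider the hyperplane $x_1+\dots+x_d=0$), hence $(d-1)K\subseteq dK$, so the $(d,d-1,d)$-set of \autoref{c:ex} is already a $(d-1,d-1,d)$-set. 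You instead rebuild a witness from scratch: take the $(d-1,d-2,d-1)$-set given by \autoref{c:ex} one dimension down and apply \autoref{t:product with interval} to get a $(d-1,d-1,d)$-set. That route is valid for $d\ge 3$ (and $d=2$ is degenerate for item (iii) anyway, since $\ell=d-1=1$ falls outside the paper's definition of an $(\ell,k,d)$-set; your aside about the $(2,1,2)$-set is the right spirit but note it addresses (ii), not (iii), at $d=2$). The monotonicity observation $iK\subseteq jK$ is the cheaper argument and is worth knowing, since it shows quite generally that an $(\ell,k,d)$-set is automatically an $(\ell',k,d)$-set for all $2\le\ell'\le\ell$; your product construction, on the other hand, has the mild virtue of producing an explicit witness without invoking that the set contains the origin. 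One stylistic caveat: the exploratory false starts in the middle of your write-up (in particular the sentence suggesting that the existence of a $(d-1,d-1,d)$-set ``is false in general'') contradict your own conclusion and should be excised; only the final assembled argument is sound.
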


\section{Generic compact sets}
\label{sec:generic}

In this section we prove that if there exists an $(\ell, k, d)$-set, then in a reasonable complete metric space of compact sets the generic element is also an $(\ell, k, d)$-set. 

\begin{definition}
Let $(\mathcal{K}([0,1]^d),d_{H})$ be the set of non-empty compact subsets of
$[0,1]^d$ endowed with the \emph{Hausdorff metric};
that is, for each $K_1,K_2\in \mathcal{K}([0,1]^d)$,
%%%%%%%%%%%%%%%%
\[d_\mathrm{H}(K_1,K_2)=\max\left\{\max_{x_1\in K_1}\min_{x_2\in K_2}d(x_1,x_2),\,\max_{x_2\in K_2}\min_{x_1\in K_1}d(x_1,x_2)\right\}.\]
\end{definition} 

\begin{definition} For $0\leq k<d$ let 
\begin{align*}
\mathcal{K}_k([0,1]^d) = 
\{ & K \in \mathcal{K}([0,1]^d): K \text{ intersects each $k$-dimensional affine subspace} \\ & \text{that meets } [0,1]^d \}.
%\text{every $k$-dimensional affine subspace $V \subseteq \mathbb{R}^d$}\\&\qquad\text{satisfies that if }[0,1]^d \cap V\neq \emptyset\text{, then } K \cap V \neq \emptyset\}.
\end{align*}
\end{definition}
Now it is straightforward to verify that $\mathcal{K}_k([0,1]^d)$ is a closed subset in $\mathcal{K}([0,1]^d)$, therefore it is a complete metric space.

\begin{theorem} \label{thgeneric}
$\exists (\ell, k, d)$-set $\implies$ the generic element of $\mathcal{K}_k([0,1]^d)$ is an $(\ell, k, d)$-set, that is, if there exists
$K_0 \in \mathcal{K}_k([0,1]^d)$ such that the $\ell$-fold sumset $\ell K_0$ is nowhere dense, then $\ell K$ is nowhere dense
for comeager many $K \in \mathcal{K}_k([0,1]^d)$.
\end{theorem}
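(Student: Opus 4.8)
The plan is a routine Baire category argument, with essentially all the work concentrated in a density lemma. Since $\mathcal{K}_k([0,1]^d)$ is a closed subspace of the complete metric space $(\mathcal{K}([0,1]^d), d_H)$ and is nonempty (it contains $[0,1]^d$), it is itself a nonempty complete metric space, hence a Baire space. It therefore suffices to show that
\[\mathcal{G} = \left\{K \in \mathcal{K}_k([0,1]^d) : \ell K \text{ is nowhere dense}\right\}\]
is a dense $G_\delta$ subset: then $\mathcal{G}$ is comeager, and every $K \in \mathcal{G}$ is an $(\ell,k,d)$-set, because membership in $\mathcal{K}_k([0,1]^d)$ already guarantees that $K$ is compact and meets every $k$-dimensional affine subspace that meets $[0,1]^d$. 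Fix an enumeration $B_1, B_2, \dots$ of a countable basis of nonempty open balls of $\R^d$ and set $\mathcal{G}_m = \{K \in \mathcal{K}_k([0,1]^d) : B_m \not\subseteq \ell K\}$. Since $K$ is compact, $\ell K$ is compact, hence closed, so $\ell K$ is nowhere dense iff it has empty interior iff it contains no $B_m$; thus $\mathcal{G} = \bigcap_{m} \mathcal{G}_m$.

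The first routine step is to check that each $\mathcal{G}_m$ is open in $\mathcal{K}_k([0,1]^d)$. Here I would use that the sumset operation is $1$-Lipschitz in each variable, $d_H(A_1 + A_2, A_1' + A_2') \le d_H(A_1, A_1') + d_H(A_2, A_2')$, which by induction makes $K \mapsto \ell K$ an $\ell$-Lipschitz map on $\mathcal{K}([0,1]^d)$. Then, given $K \in \mathcal{G}_m$, pick $p \in B_m \setminus \ell K$ and let $\rho = \operatorname{dist}(p, \ell K) > 0$; for any $K'$ with $d_H(K, K') < \rho/(2\ell)$ we get $\operatorname{dist}(p, \ell K') \ge \rho - \ell\, d_H(K, K') > \rho/2 > 0$, so $p \in B_m \setminus \ell K'$ and $K' \in \mathcal{G}_m$.

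The main step is density, and in fact I would prove the stronger statement that $\mathcal{G}$ itself is dense, i.e. that every $K \in \mathcal{K}_k([0,1]^d)$ is approximated arbitrarily well by an $(\ell,k,d)$-set. Fix an $(\ell,k,d)$-set $K_0$ (which exists by hypothesis), a set $K \in \mathcal{K}_k([0,1]^d)$, and $\varepsilon > 0$; choose $N$ with $\diam([0,1]^d)/N = \sqrt d / N < \varepsilon$. For $C \in \mathcal{C}^d_N$ let $\phi_C(x) = \tfrac1N x + t_C$ be the homothety with $\phi_C([0,1]^d) = C$, and set
\[K' = \bigcup\left\{\phi_C(K_0) : C \in \mathcal{C}^d_N,\ C \cap K \neq \emptyset\right\},\]
a finite union of compact sets, so $K' \subseteq [0,1]^d$ is compact. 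I would then verify three things. (a) $d_H(K, K') < \varepsilon$: $K'$ lies in the union of the cubes meeting $K$, so every point of $K'$ is within $\diam(C) < \varepsilon$ of $K$; and every $x \in K$ lies in some $C$ with $C \cap K \neq \emptyset$, and $\emptyset \neq \phi_C(K_0) \subseteq C \cap K'$, so $x$ is within $\diam(C) < \varepsilon$ of $K'$. (b) $K' \in \mathcal{K}_k([0,1]^d)$: if $V$ is a $k$-dimensional affine subspace meeting $[0,1]^d$, then it meets $K$ (as $K \in \mathcal{K}_k$), hence meets some cube $C$ with $C \cap K \neq \emptyset$; applying the similarity $\phi_C^{-1}$, the $k$-dimensional affine subspace $\phi_C^{-1}(V)$ meets $[0,1]^d$, hence meets $K_0$, so $V$ meets $\phi_C(K_0) \subseteq K'$. (c) $\ell K'$ is nowhere dense: $\ell K'$ is the union, over the finitely many $\ell$-tuples $(C_1, \dots, C_\ell)$ of chosen cubes, of the sets $\phi_{C_1}(K_0) + \dots + \phi_{C_\ell}(K_0) = \tfrac1N\, \ell K_0 + (t_{C_1} + \dots + t_{C_\ell})$, each a translate of the compact nowhere dense set $\tfrac1N\, \ell K_0$, and a finite union of closed nowhere dense sets is nowhere dense. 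Hence $K' \in \mathcal{G}$, with $d_H(K,K') < \varepsilon$.

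Finally, having the $\mathcal{G}_m$ open and $\mathcal{G} = \bigcap_m \mathcal{G}_m$ dense, the Baire category theorem yields that $\mathcal{G}$ is a dense $G_\delta$, hence comeager, which is the assertion of the theorem. I expect the main obstacle to be the density step, and specifically the point of realizing that one should discard $K$ altogether and rebuild the approximant as a union of rescaled copies of $K_0$ over a fine grid: this is exactly what makes $\ell K'$ a finite union of rescaled copies of the nowhere dense set $\ell K_0$, while the elementary observation that every $k$-dimensional affine subspace meeting $[0,1]^d$ must already meet $K$ — and therefore one of the retained small cubes — is what keeps $K'$ inside $\mathcal{K}_k([0,1]^d)$.
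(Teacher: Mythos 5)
Your proposal is correct and follows essentially the same route as the paper: the approximant you build, $K'=\bigcup\{\phi_C(K_0): C\in\mathcal{C}^d_N,\ C\cap K\neq\emptyset\}$, is exactly the paper's set $H+\frac1n\cdot K_0$ (with $H$ the set of corners of the retained grid cubes), and the three verifications (Hausdorff distance, membership in $\mathcal{K}_k([0,1]^d)$ via pulling the subspace back by the similarity, and nowhere density of $\ell K'$ as a finite union of translates of $\frac1N\,\ell K_0$) match the paper's. The only cosmetic difference is that you justify openness of the sets $\mathcal{G}_m$ via a Lipschitz estimate for the sumset map, where the paper simply observes that the condition $x\notin\ell K$ is stable under small perturbations of $K$.
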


\begin{proof}
 Fixing a countable base $(B_i)_{i \in \mathbb{N}}$ of $[0,1]^d$ it suffices to prove that for each $i$ the set
 \[ U_i = \{K \in \mathcal{K}_k([0,1]^d): \ \ell K \nsupseteq B_i \} \]
 is dense open.
 
 If $K \in \mathcal{K}_k([0,1]^d)$, $x \in B_i$ are such that 
 \begin{equation} \label{xnem} x \notin \ell K, \end{equation}
 then there is a neighbourhood of $K$ in $\mathcal{K}([0,1]^d)$ satisfying $\eqref{xnem}$ (hence also in $\mathcal{K}_k([0,1]^d)$).
 
 For the density of $U_i$, fix $K \in \mathcal{K}_k([0,1]^d)$, and $\varepsilon >0$. We will construct $K' \in \mathcal{K}_k([0,1]^d)$ with $\ell K'$ nowhere dense and $d_\mathrm{H}(K,K') < \varepsilon$. Fix $n>0$ such that $\frac{2\sqrt{d}}{n} < \varepsilon$. Define the finite set 
 $$\textstyle H = \left\{ x \in \left\{\frac{m}{n} : \ m \in \mathbb{Z}, \ 0 \leq m < n\right\}^d: \ \left( x + \left[0,\frac{1}{n}\right]^d\right) \cap K  \neq \emptyset \right\}. $$
 Now obviously  $$\textstyle H + \left[0,\frac{1}{n}\right]^d \supseteq K,$$ thus $H + \left[0,\frac{1}{n}\right]^d \in \mathcal{K}_k([0,1]^d)$, and also $d_\mathrm{H}\big(K,H + \left[0,\frac{1}{n}\right]^d\big) \leq \frac{\sqrt{d}}{n}$.
 
We know that $H + \left[0,\frac{1}{n}\right]^d \in \mathcal{K}_k([0,1]^d)$, that is, this set intersects each $k$-dimensional affine subspace intersecting $[0,1]^d$. But we also know that $K_0\in \mathcal{K}_k([0,1]^d)$, which implies that for any translation vector $h\in \R^d$, the set $h+\frac{1}{n} {\cdot} K_0$ intersects the same $k$-dimensional affine subspaces as the set $h+\left[0,\frac1n\right]^d$. These imply that $K' = H + \frac{1}{n} {\cdot} K_0 \in \mathcal{K}_k([0,1]^d)$.
 
 It can be easily seen that 
 $d_{\mathrm{H}}(K',H + \left[0,\frac{1}{n}\right]^d) \leq \frac{\sqrt{d}}{n}$, thus recalling $d_{\mathrm{H}}(K,H + \left[0,\frac{1}{n}\right]^d) \leq \frac{\sqrt{d}}{n}$  we obtain 
 \[ d_\mathrm{H}\left(K' ,K \right) \leq \frac{2\sqrt{d}}{n}< \varepsilon. \]
 Finally, $\ell K' = \ell H + \ell\left(\frac{1}{n} {\cdot} K_0\right)$ is nowhere dense because $H$ is finite and the sum $\ell K_0$ is nowhere dense.
\end{proof}

\section{A self-similar construction}
\label{sec:self-similar}

In this section we prove the existence of a $(2,1,2)$-set in a constructive fashion, without using the probabilistic method described in \autoref{sec:full projections}. Additionally, the $(2,1,2)$-set constructed in this section will be self-similar. However, this proof cannot be easily generalized for $d>2$, as it relies on some special properties of a certain planar pattern, which we found with the combination of computer-aided search and manual improvements.

\begin{theorem}\label{self-similar}
There exists a self-similar $(2,1,2)$-set, that is, a self-similar, compact set $K\subseteq [0,1]^2$ such that $K$ intersects each line that meets $[0, 1]^2$, but $K+K$ is nowhere dense.
\end{theorem}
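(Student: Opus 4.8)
The plan is to exhibit an explicit integer $N\ge 2$ and an explicit set $\mathcal{P}\subseteq\{0,\dots,N-1\}^2$ of grid cells — found with the help of a computer search and depicted in a figure — and to take $K$ to be the attractor of the iterated function system $\{f_p : p\in\mathcal{P}\}$, where $f_p(x)=\frac{x+p}{N}$. By construction $K$ is a self-similar compact subset of $[0,1]^2$ with $K=\bigcup_{p\in\mathcal{P}}f_p(K)$, and writing $K_0=[0,1]^2$, $K_{n+1}=\bigcup_p f_p(K_n)$, one has $K=\bigcap_n K_n$, where $K_n$ is the union of those cubes of $\mathcal{C}^2_{N^n}$ all of whose base-$N$ digit vectors lie in $\mathcal{P}$. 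It then remains to choose $\mathcal{P}$ so that (a) $K$ meets every line that meets $[0,1]^2$, and (b) $K+K$ is nowhere dense. Both of these will be reduced to finite, computer-verifiable conditions on the single pattern $\mathcal{P}$; producing a pattern that satisfies both at once is the heart of the matter.

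For (a) I would first reduce to a purely combinatorial condition. Since $K=\bigcap_n K_n$ is a decreasing intersection of compacta, $K$ meets a line $L$ iff $L$ meets $K_n$ for every $n$, and by the self-similar structure this follows by induction on $n$ from the single-scale statement: every line $L$ meeting $[0,1]^2$ meets some cube $C^2_N(p)$ with $p\in\mathcal{P}$ — that is, in the language of \autoref{sec:line traces}, $\mathcal{P}$ is a transversal for the family of all line traces in $\mathcal{C}^2_N$, equivalently $\mathcal{C}^2_N\setminus\mathcal{P}$ contains no line trace. The one delicate point in the inductive step is a degenerate intersection, when the line rescaled into a chosen cube only touches a corner of the unit square; this is handled by also insisting that $\mathcal{P}$ contains the four corner cells $(0,0),(N-1,0),(0,N-1),(N-1,N-1)$, so that the corresponding corner point lies in $K_{n+1}$ and, iterating, in $K$. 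As $\mathcal{C}^2_N$ has only finitely many line traces (at most $(2N)^5$ by \autoref{l:number of line traces 2d}, although only finiteness is needed here), transversality is a finite check; this is the self-contained replacement for the combinatorial lemmas of \autoref{sec:line traces}.

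For (b), observe that $K+K$ is itself self-similar: $K+K=\bigcup_{p,q\in\mathcal{P}}\big(f_p(K)+f_q(K)\big)=\bigcup_{r\in\mathcal{P}+\mathcal{P}}g_r(K+K)$ with $g_r(x)=\frac{x+r}{N}$, equivalently $K+K=\bigcap_n(K_n+K_n)$, where $K_n+K_n$ is a union of cubes of side $2/N^n$ located at positions governed by the $n$-fold sums of digits from $\mathcal{P}$. I would pick $\mathcal{P}$ so that this set has empty interior, which reduces to a finite property of the sumset $\mathcal{P}+\mathcal{P}\subseteq\{0,\dots,2N-2\}^2$: that $\mathcal{P}+\mathcal{P}$ omits a large enough block, so that already $K_m+K_m$ fails to cover $[0,2]^2$ on a scale that is stable under the self-similar recursion. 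Concretely one verifies, for the explicit pattern, that there is a cube $C^*$ with $\frac{K_m+K_m}{2}\cap\interior C^*=\emptyset$, together with enough margin around $C^*$ that the images of this gap under compositions of the maps $g_r$ remain gaps of $K+K$; these images are dense in $[0,2]^2$, whence $K+K$ is nowhere dense. (If the search happened to produce a pattern so small relative to $N$ that $\#(\mathcal{P}+\mathcal{P})<N^2$, there is a cheaper route: then $\dim_H(K+K)\le\log\#(\mathcal{P}+\mathcal{P})/\log N<2$, so $K+K$ has Lebesgue measure zero and is nowhere dense — but whether this is available depends on the pattern.)

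The main obstacle is the tension between (a) and (b): requirement (a) forces $\mathcal{P}$ to be large and spread out, since it must meet each of the many line traces, while requirement (b) forces $\mathcal{P}$ to be additively structured, so that $\mathcal{P}+\mathcal{P}$ has large gaps. Reconciling these — and doing so with an explicit pattern one can actually certify — is exactly what the computer-aided search accomplishes. A secondary subtlety is that the self-similar copies $g_r(K+K)$ in the set equation for $K+K$ overlap (they have side $2/N$ but centers spaced $1/N$ apart), so a gap of some $K_m+K_m$ does not propagate automatically; one must build in enough slack that the gap survives passage through all the overlapping copies, which is again a property to be checked for the chosen $\mathcal{P}$.
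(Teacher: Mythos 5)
Your framework coincides with the paper's: an explicit pattern $\mathcal{P}$ of cells in an $N\times N$ grid (the paper takes $N=10$ and a $27$-cell pattern $T$), the attractor $K$ of the induced IFS, a finite single-scale transversality statement plus induction for the line-intersection property, and a finite condition on the pattern, propagated through all scales, for nowhere-density of $K+K$. Part (a) of your plan is sound and is exactly the paper's \autoref{klarge}; the paper certifies the single-scale statement by a connectivity argument on the picture rather than by enumerating line traces, and your corner stipulation is harmless but already subsumed by transversality, since a line touching $[0,1]^2$ only at a corner has the corner cube as its entire trace.

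The genuine gap is in part (b), precisely at the point you flag but do not resolve. ``A cube $C^*$ missed by $\frac{K_m+K_m}{2}$, with enough margin that images of the gap under compositions of the $g_r$ remain gaps'' is not a certifiable finite condition: because the copies $g_r(K+K)$ overlap, the image of the gap under one branch is in general covered by other branches, and you never specify what inequality on $\mathcal{P}$ excludes this. The paper's resolution is to periodize. Since $T\subseteq\Z^2$ and the contraction ratio is the reciprocal of an integer, one has
\[
K_i \;=\; 10^{-i+1}\cdot K_1+\sum_{j=1}^{i-1}10^{-j}\cdot T \;\subseteq\; 10^{-i+1}\cdot\bigl(K_1+\Z^2\bigr)
\quad\text{for every } i\ge 1,
\]
hence $K+K\subseteq 10^{-i+1}\cdot(K_1+K_1+\Z^2)$ simultaneously for all $i$. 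The finite check is then that some vector $v$ has $v-K_1$ disjoint from $K_1+\Z^2$ (\autoref{k1small}, with $v=(1.45,1.55)$); this forces $v+\Z^2$ to be disjoint from $K_1+K_1+\Z^2$, so $K+K$ misses the dense set $\bigcup_{i\ge 1}10^{-i+1}\cdot(v+\Z^2)$. That single lattice-disjointness condition is the missing idea that makes one first-level gap survive all overlapping branches at every scale at once. Beyond this, the actual content of the theorem is the existence of a pattern satisfying both finite conditions simultaneously; you rightly call this the heart of the matter, but your proposal defers it entirely to an unspecified search, so no proof is obtained. (Your fallback bound $\dim_H(K+K)\le\log\#(\mathcal{P}+\mathcal{P})/\log N$ is valid in principle but unavailable here: the paper's $T+T$ has far more than $N^2=100$ elements.)
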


\begin{proof}
The fractal $K$ will be the intersection of the sets $K_0\supseteq K_1\supseteq K_2\supseteq\ldots $, which are defined by a recursion of the form
\[K_0=[0,1]^2\quad\text{and}\quad K_{i+1}=\bigcup_{t\in T} \varphi_t(K_i)\quad\text{for }i\in\mathbb{N}.\]
\autoref{fig:pict} illustrates the first two steps of the construction.
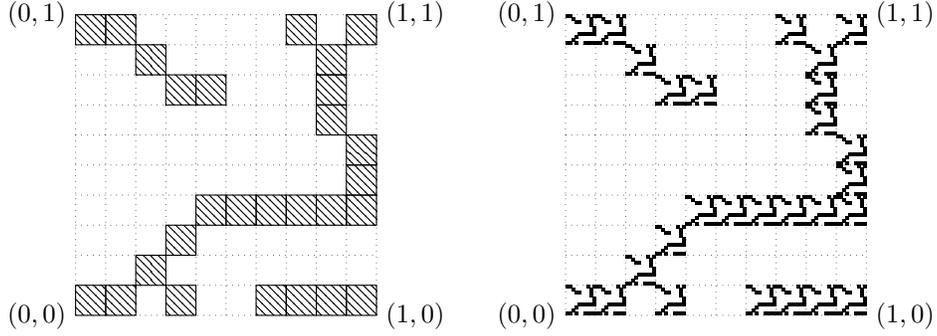
\begin{figure}[htb]
\begin{tikzpicture}[x=0.4cm, y=0.4cm]
\squaregrid
\foreach \pos in \plaincoords {
  \draw[pattern=north west lines] \pos rectangle +(1,1);
}
\end{tikzpicture}
\quad
\begin{tikzpicture}[x=0.4cm, y=0.4cm]
\squaregrid
\foreach \pos in \plaincoords {
  %\draw \pos rectangle +(1,1);
  \foreach \subpos in \plaincoords {
    \edef\mytx{($\pos + 1/10*\subpos$) rectangle +(0.1,0.1);}
    \expandafter\fill\mytx 
}
}
\end{tikzpicture}
\caption{\label{fig:pict}The sets $K_1$ (union of the \arabic{coordcount} hatched squares from the grid $\mathcal{C}^2_{10}$) and $K_2$ (union of the $\arabic{coordcount}^2$ filled squares from $\mathcal{C}^2_{100}$)}
\end{figure}

For a formal definition of this fractal, we use an iterated function system $\{\varphi_t : t\in T\}$ that consists of \arabic{coordcount} contractions, each of similarity ratio $\frac1{10}$. For a translation vector $t\in\R^2$ we define the contraction $\varphi_t$ as
\[\varphi_t: \R^2\to\R^2,\qquad \varphi_t(x)=\frac1{10}(x+t);\]
and we construct the fractal using the translation vectors in
\begin{align*}
T=\{&\coords{\\&}\}\subseteq \R^2.
\end{align*}
It is easy to see that this definition indeed implies that $K_0\supseteq K_1\supseteq K_2\supseteq\ldots $ and the intersection $K = \bigcap_{i\in\mathbb{N}}K_i$ will be a compact subset of $[0,1]^2$. 

%\begin{remark}
%This construction is very simple compared to the random patterns produced in \autoref{t:K+K construction}: there each step reduces the size of the grid cubes by a factor of at least $10^6$ (even in the simple case of $d=2$).
%\end{remark}

%The rest of the proof consists of \autoref{klarge} (which shows that $K$ intersects every line intersecting the unit square) and \autoref{ksmall} (which shows that $K+K$ is nowhere dense). 

\begin{fact}\label{k1large}
If a line $L$ intersects $[0,1]^2$, then it also intersects the set $K_1$.
\end{fact}

\begin{proof}
It is easy to verify this fact on \autoref{fig:kone}, which shows the set $K_1$ in the unit square.

\begin{figure}[htb]
\begin{tikzpicture}[x=0.5cm, y=0.5cm]
\squaregrid
\foreach \x in \plaincoords
  \draw[pattern=north west lines] \x rectangle +(1,1);
\draw (2,-1) -- (12,4);
\fill (4,0) circle[radius=0.3ex] node[anchor=north west] {$a$} (6,1) circle[radius=0.3ex] node[anchor=south east] {$b$} (10,3) circle[radius=0.3ex] node[anchor=north west] {$c$};
\draw (0,-0.5) -- (7.5,10.75);
\fill (1,1) circle[radius=0.3ex] node[anchor=south east] {$a'$} (5,7) circle[radius=0.3ex] node[anchor=north west] {$b'$} (7,10) circle[radius=0.3ex] node[anchor=south east] {$c'$};
\end{tikzpicture}
\caption{\label{fig:kone}The set $K_1$ (union of the \arabic{coordcount} hatched squares) in $[0,1]^2$}
\end{figure}
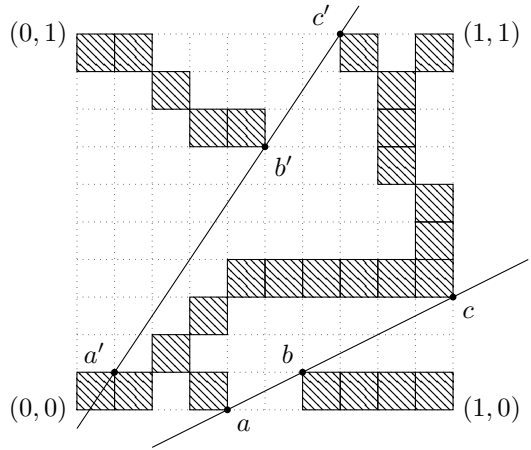

For a more formal proof, observe that if the line $L$ intersects the diagonal between $(0,0)$ and $(1,1)$, then it intersects $K_1$ because these corners are connected by a continuous path in $K_1$. Otherwise there are two possible cases:

\textbf{Case 1:} $L$ intersects the bottom and right sides of the square $[0,1]^2$.

Notice that the set $K_1\cup [a, b]$ (where $[a, b]$ denotes the closed segment between the points $a$ and $b$ marked on \autoref{fig:kone}) contains a continuous path between $(0,0)$ and $(1,0)$, while $K_1\cup [b,c]$ contains a continuous path between $(1,0)$ and $(1,1)$. These mean that $L$ must intersect both of these sets, but then either $L$ intersects $K_1$ or $L$ intersects both of the collinear segments $[a,b]$ and $[b,c]$. The latter means that $L$ passes through the common endpoint $b$, but then $b\in K_1$ allows us to conclude this case.

\textbf{Case 2:} $L$ intersects the top and left sides of the square $[0,1]^2$.

This case can be handled analogously to Case 1, using the collinear points $a'$, $b'$ and $c'$ instead of $a$, $b$ and $c$.
\end{proof}

\begin{claim}\label{klarge}
If a line $L$ intersects $[0,1]^2$, then it also intersects $K$.
\end{claim}

\begin{proof}
For a line $L$, the set $L\cap K$ can be written as the intersection of the compact sets \[L\cap K_0 \supseteq L\cap K_1 \supseteq L\cap K_2 \supseteq\ldots\]
and this means that it is enough to prove that these sets are all non-empty.

We use induction to show that each index $i\in\mathbb{N}$ satisfies that if $L\cap [0,1]^2\neq \emptyset$, then $L\cap K_i\neq \emptyset$. The case of $i=0$ is trivial. For the induction step, assume that the induction hypothesis holds for some $i\in\mathbb{N}$.

According to \autoref{k1large}, our line $L$ intersects $K_1$ and this implies that $L$ intersects $\varphi_{t_0}(K_0)$ for a (not necessarily unique) $t_0\in T$. Then the line $\varphi_{t_0}^{-1}(L)$ intersects $K_0$ and so the induction hypothesis implies that the intersection \(\varphi_{t_0}^{-1}(L)\cap K_i\) is non-empty.
But then
\[\emptyset \neq\varphi_{t_0}(\varphi_{t_0}^{-1}(L)\cap K_i) = L\cap\varphi_{t_0}(K_i)\subseteq L\cap K_{i+1},\]
and this shows the induction hypothesis also holds for $i+1$.
\end{proof}

\begin{fact}\label{k1small}
There is a vector $v\in\R^2$ such that $v-K_1$ is disjoint from $K_1 + \Z^2$.
\end{fact}

%Note that $K_1 + \Z^2$ is the periodic set where the pattern $K_1$ is repeated in every cell of the Cartesian grid; and $v-K_1$ is the reflection of $K_1$ across the point $\frac{v}{2}$.

\begin{proof}
We will choose $v=(1.45, 1.55)$. If $(a, b)\in\Z^2$ such that either $a\notin\{0,1\}$ or $b\notin\{0,1\}$, then $K_1\subseteq [0,1]^2$ immediately implies that $v-K_1$ is disjoint from $K_1+(a,b)$. Therefore it is sufficient to verify that
\[v-K_1\quad\text{is disjoint from}\quad K_1+\{(0,0), (1,0), (0,1), (1,1)\}.\]

Note that $-K_1$ is the reflection of $K_1$ with respect to the origin and $v-K_1$ is the translation of $-K_1$ by the vector $v$; while the set $K_1+\{(0,0), (1,0), (0,1), (1,1)\}$ is the union of four translated copies of $K_1$. We illustrate these planar shapes on \autoref{fig:dblsize}, and here it is clearly visible that $v-K_1$ (the filled area) is disjoint from the four translated copies of $K_1$.

\begin{figure}[htb]
\begin{tikzpicture}[x=0.3cm, y=0.3cm]
\squaregrid[2]
\foreach \quad in {(0, 0)} {
  \foreach \x in \plaincoords
    \draw[pattern=crosshatch] \quad ++\x rectangle +(1,1);
}
\foreach \quad in {(10, 0), (0, 10), (10,10)} {
  \foreach \x in \plaincoords
    \draw[pattern=north west lines] \quad ++\x rectangle +(1,1);
}
\coordinate (v) at (14.5,15.5);
\foreach \x in \plaincoords {
  \edef\mycmd{ (v) ++($-1*\x$) rectangle +(-1,-1);}
  \expandafter\filldraw\mycmd
}
\fill (v) circle[radius=0.3ex] node[anchor=west] {$v$};
\end{tikzpicture}
\caption{\label{fig:dblsize}The sets $v-K_1$ (filled area), $K_1$ (crosshatched area) and $K_1+\{(1,0), (0,1), (1,1)\}$ (hatched areas)}
\end{figure}
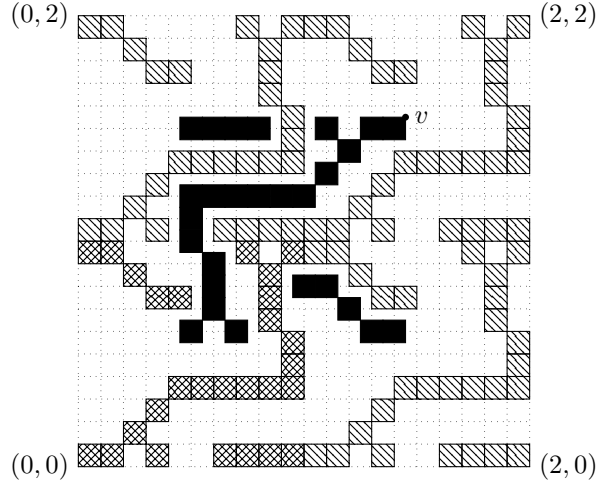

The pedantic verification of this fact is also straightforward, as 
\[K_1=\bigcup_{t\in T}\varphi_{t}([0,1]^2)=\bigcup_{(t_1, t_2)\in T} C^2_{10}(t_1, t_2)\]
is the union of \arabic{coordcount} grid squares. For example, we may note that the set
\[T^+ = T \cup (T+(10,0))\cup (T+(0,10)) \cup (T+(10,10))\subseteq \Z^2\]
satisfies that
\[K_1+\{(0,0), (1,0), (0,1), (1,1)\} = \bigcup_{(t_1, t_2)\in T^+} C^2_{10}(t_1,t_2).\]
On the other hand, it is easy to verify that if $s_1, s_2, t_1, t_2\in \Z$, then the grid square $C^2_{10}(s_1,s_2)$ intersects the square $v-C^2_{10}(t_1,t_2)$ if and only if $s_1\in \{13-t_1, 14-t_1\}$ and $s_2\in \{14-t_2,15-t_2\}$. This implies that $v-K_1$ is intersected by the grid square $C^2_{10}(s_1,s_2)$ if and only if $(s_1, s_2)\in T^-$ for the set
\[T^- = ((13,14)-T) \cup ((14,14)-T)\cup ((13,15)-T) \cup ((14,15)-T)\subseteq \Z^2.\]
Here we may mechanically check that $T^+\cap T^-=\emptyset$, which concludes the proof.
\end{proof}

\begin{claim}\label{ksmall}
The sum $K+K$ is nowhere dense in the plane.
\end{claim}

\begin{proof}
As $K+K$ is closed, it is enough to prove that $\interior(K+K)=\emptyset$, or equivalently, $\R^2\setminus (K+K)$ is dense in the plane.

First note that for $i\ge 2$ we can directly expand the recursive definition of $K_i$ to get
\[K_i = 10^{-i+1}\cdot K_1 + \sum_{j=1}^{i-1} 10^{-j}\cdot T.\]
As $T\subseteq \Z^2$ (and $10\in\Z$), this implies that
\[K\subseteq K_i \subseteq 10^{-i+1}\cdot (K_1 + \Z^2),\]
which is also trivially true for $i=1$. As $\Z^2$ is closed under addition,
\[K+K\subseteq 10^{-i+1}\cdot (K_1 + K_1 +\Z^2)\quad\text{for every positive integer }i.\]

However, \autoref{k1small} states that there exists a vector $v\in \R^d$ such that the sets $K_1+\Z^2$ and $v-K_1$ are disjoint. This means that there are no $k, k'\in K_1$ and $z\in \Z^2$ such that $k +z = v - k'$. Rearranging this (and using that the integer lattice is closed under addition) yields that $v + \Z^2$ is disjoint from $K_1 + K_1+\Z^2$, and then for every positive integer $i$
\[10^{-i+1}\cdot (K_1 + K_1 +\Z^2)\quad\text{is disjoint from}\quad 10^{-i+1}\cdot (v + \Z^2).\]

The combination of our observations proves that $K+K$ is disjoint from
\[\bigcup_{i=1}^\infty 10^{-i+1}\cdot (v + \Z^2)\]
and this is a dense set, which concludes our proof.
\end{proof}
The combination of \autoref{klarge} and \autoref{ksmall} completes the proof of \autoref{self-similar}.
\end{proof}

Although this theorem was based on the properties of the planar pattern $K_1$, it also implies the following simple result for higher dimensions:

\begin{corollary}\label{self-similar higher dimension}
For each integer $d\ge 2$, there exists a self-similar $(2,d-1,d)$-set, that is, a self-similar, compact set $K\subseteq [0,1]^d$ such that $K$ intersects each $(d-1)$-dimensional affine subspace that meets $[0, 1]^d$, but $K+K$ is nowhere dense.
\end{corollary}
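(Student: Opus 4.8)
The plan is to combine the self-similar $(2,1,2)$-set furnished by \autoref{self-similar} with the product construction of \autoref{t:product with interval}, taking care that the product stays self-similar by arranging all contraction ratios to agree. Concretely, for $d\ge 2$ I would take the planar fractal $K$ of \autoref{self-similar} and consider $K' = K\times[0,1]^{d-2}\subseteq[0,1]^d$ (so $K'=K$ when $d=2$), then check separately that $K'$ is self-similar and that it is a $(2,d-1,d)$-set.

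For self-similarity, first observe that $[0,1]$ is itself self-similar: it is the attractor of the iterated function system $\{\psi_0,\dots,\psi_9\}$ with $\psi_j\colon\R\to\R$, $\psi_j(y)=\tfrac1{10}(y+j)$, each a similarity of ratio $\tfrac1{10}$ — the same ratio as the contractions $\varphi_t$ ($t\in T$) used to build $K$ in \autoref{self-similar}. Hence $K'=K\times[0,1]^{d-2}$ is the attractor of the iterated function system consisting of the maps
\[(x, y_1, \dots, y_{d-2}) \longmapsto \tfrac1{10}\bigl(x+t,\ y_1+j_1,\ \dots,\ y_{d-2}+j_{d-2}\bigr), \qquad t\in T,\ j_1,\dots,j_{d-2}\in\{0,\dots,9\},\]
each of which is a similarity of ratio $\tfrac1{10}$. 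That the attractor of this system is indeed $K'$ follows from uniqueness of the attractor: applying these maps to $K\times[0,1]^{d-2}$ and taking the union returns $\bigl(\bigcup_{t\in T}\varphi_t(K)\bigr)\times\prod_{i=1}^{d-2}\bigl(\bigcup_{j}\psi_j([0,1])\bigr)=K\times[0,1]^{d-2}$. So $K'$ is self-similar.

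It remains to verify the $(2,d-1,d)$-set properties of $K'$. For $d=2$ this is precisely \autoref{self-similar}. For $d>2$ I would iterate \autoref{t:product with interval} a total of $d-2$ times: starting from the $(2,1,2)$-set $K$, the theorem gives that $K\times[0,1]$ is a $(2,2,3)$-set, then $K\times[0,1]^2$ is a $(2,3,4)$-set, and inductively $K\times[0,1]^{d-2}$ is a $(2,d-1,d)$-set; in particular $K'$ is compact, meets every $(d-1)$-dimensional affine subspace that meets $[0,1]^d$, and $K'+K'$ is nowhere dense. Together with the previous paragraph this proves the corollary. There is essentially no obstacle beyond bookkeeping; the one genuine point is that a product of self-similar sets need not be self-similar unless the contraction ratios are compatible, which is why it matters that the fractal of \autoref{self-similar} has ratio $\tfrac1{10}$ and that $[0,1]$ decomposes into ten copies of ratio $\tfrac1{10}$ — everything else is a direct iteration of \autoref{t:product with interval}.
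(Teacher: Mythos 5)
Your proposal is correct and follows essentially the same route as the paper: form $K\times[0,1]^{d-2}$, iterate \autoref{t:product with interval} to get the $(2,d-1,d)$-set properties, and exhibit a product iterated function system for self-similarity. The only (immaterial) difference is that you exploit the fact that all contraction ratios equal $\tfrac1{10}$ to tile $[0,1]$ exactly by ten intervals, whereas the paper's argument is written to work for arbitrary, possibly unequal ratios $\lambda_t$ by covering $[0,1]$ with possibly overlapping translates of $[0,\lambda_t]$.
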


\begin{proof}
Let $K_0$ denote the self-similar $(2,1,2)$-set constructed in \autoref{self-similar}. For a dimension $d\ge 2$, let us define
\[K = K_0\times [0,1]^{d-2}\subseteq[0,1]^d.\]
The iterated application of \autoref{t:product with interval} shows that $K$ is a $(2, d-1, d)$-set.

As $K_0$ is self-similar, it satisfies that \(K_0=\bigcup_{t\in T}\varphi_t(K_0)\) for some family $(\varphi_t)_{t\in T}$ of similarity transformations on the plane. For $t\in T$, let $\lambda_t$ denote the similarity ratio of $\varphi_t$ (which is clearly less than one). For each $t\in T$, consider the finite set
\[F_t = ((\lambda_t\cdot\mathbb{N})\cap[0, 1-\lambda_t]) \cup \{1-\lambda_t\},\]
which clearly satisfies that $F_t+[0,\lambda_t]=[0,1]$.

For a $t\in T$ and a vector $f=(f_3, f_4, \ldots, f_{d})$ where $f_i\in F_t$ for $2< i\le d$ let us define $\psi_{t, f}\colon \R^d\to \R^d$ as
\[\psi_{t,f}(x)=\left((\varphi_t((x_1,x_2)))_1, (\varphi_t((x_1,x_2)))_2, \lambda_t x_3 + f_3, \lambda_t x_4 + f_4,\ldots, \lambda_t x_d + f_d\right).\]
It is clear from this definition that $\psi_{t,f}$ is a similarity with ratio $\lambda_t$. The choice of $F_t$ implies that
\[\bigcup_{t \in T}\bigcup_{f\in F_t^{d-2}} \psi_{t,f}(K) = K,\]
which shows that $K$ is indeed self-similar.
\end{proof}

\begin{remark}
The so-called open set condition (see e.g.~\cite{Ma}) is satisfied by the self-similar $(2,1,2)$-set constructed in \autoref{self-similar} and the self-similar $(2,d-1,d)$-sets constructed in \autoref{self-similar higher dimension}.
\end{remark}

\begin{proof}
Recall that in the proof of \autoref{self-similar} we defined the self-similar $(2,1,2)$-set with the help of a family $(\varphi_t)_{t\in T}$ of similarity transformations. It is easy to verify that for the open unit square $(0,1)^2$ the images $\varphi_t((0,1)^2)$ ($t\in T$) are pairwise disjoint, which shows that the open set condition holds.

To prove the second claim we have to use the additional fact that the similarity ratio of $\varphi_t$ is $\lambda_t = \frac1{10}$ for each $t\in T$ (according to the definition in the proof of \autoref{self-similar}). As this common value happens to be the reciprocal of an integer, we can verify that the similarities $\psi_{t,f}$ (defined in the proof of \autoref{self-similar higher dimension}) will map the open cube $(0,1)^d$ onto disjoint subsets of itself.
\end{proof}

\section{Further remarks and open problems}
\label{sec:open}

First, we mention some facts about the possible Hausdorff dimension of the compact sets we constructed. 

\begin{remark}
  As we saw in \autoref{c:(2,k,d) <=> k >= d/2}, there exists a $(2, k, d)$-set if and only if $k \ge \frac{d}{2}$. In this case there exists even a $(2, k, d)$-set with Hausdorff dimension $d-k$ and another one with Hausdorff dimension $d$. (It is easy to see that these are the smallest and the largest possible values, since there is no $(2, k, d)$-set of Hausdorff dimension less than $d-k$, as \autoref{f:projections <-> intersections} with $m = d-k$ implies that such sets have full $(d-k)$-dimensional projections.) 
  
  To get a $(2, k, d)$-set with Hausdorff dimension $d-k$ note that a standard argument shows that the generic element of $\mathcal{K}_k([0, 1]^d)$ has Hausdorff dimension $d-k$, while \autoref{thgeneric} implies that the generic element is also a $(2, k, d)$-set.
  
  To get a $(2, k, d)$-set with Hausdorff dimension $d$, one needs go through the proof of \autoref{t:K+K construction} and notice the following. If the sequence $a_n$ tends to infinity fast enough then at each subdivision of a cube $C \in \mathcal{C}_n$ a fixed ratio of the smaller cubes is chosen with high probability. A standard argument then shows that the resulting set indeed has Hausdorff dimension $d$. 
\end{remark}

\begin{remark} 
It would also be interesting to examine what one can say about the box, packing, and topological dimensions of $(\ell, k, d)$-sets.
\end{remark} 

We now move on to open problems. So far, very little seems to be known about the case $\ell > d$. The first instance of an open question is the following.

\begin{question}
Does there exist a $(3,1,2)$-set, that is, does there exist a compact set $K\subseteq [0,1]^2$ such that $K$ intersects each line that meets $[0, 1]^2$, but $K+K+K$ is nowhere dense?
\end{question}

We cannot even rule out the following.

\begin{question}
Does there exist a set that is an $(\ell,1,2)$-set for every $\ell$, that is, does there exist a compact set $K\subseteq [0,1]^2$ such that $K$ intersects each line that meets $[0, 1]^2$, but the $\ell$-fold sumset $\ell K$ is nowhere dense for each $\ell$?
\end{question}

\begin{remark} We remark that the additive semigroup generated by a suitable translate of such a set would yield an interesting example of a meager $F_\sigma$ subsemigroup intersecting every line in the plane.
\end{remark} 
%TODO: EZ IGAZ? A CSOPORT IS JÓ? VAGY ILYENEK TRIVIN VANNAK?

\begin{remark} Also observe that if there exists a $(\ell,1,2)$-set $K$ with some $\ell \ge 4$ then $K+K$ is totally disconnected, hence has topological dimension $0$. Indeed, assume to the contrary that $C$ is a non-trivial connected component of $K+K$. First, if $C$ is not contained in a line, then \cite[Theorem~3]{Banakh} implies that $C+C$ has non-empty interior, contradicting that $K+K+K+K$ is nowhere dense. Secondly, if $C$ is a line segment, then using that $K$ intersects all lines in this particular direction that meets $[0,1]^2$, an easy category argument implies that $K+C$ has non-empty interior, contradicting even that $K+K+K$ is nowhere dense.
\end{remark}

This motivates the next question. Recall that a totally disconnected compact set with no isolated point is called a \emph{Cantor set}.   

\begin{question}
Does there exist a compact set $K\subseteq [0,1]^2$ such that $K$ intersects each line that meets $[0, 1]^2$, but $K+K$ is a Cantor set?
\end{question}

\bigskip
In the easier-looking case $\ell \le d$ the following instance is the first one left open by our results. 

\begin{question}
Does there exist a $(3, 3, 5)$-set, that is, does there exist a compact set $K\subseteq [0,1]^5$ such that $K$ intersects each $3$-dimensional affine subspace that meets $[0,1]^5$, but $K+K+K$ is nowhere dense?
\end{question}

Our last question asks if \autoref{self-similar higher dimension} can be generalized.

\begin{question}
$\exists (\ell, k, d)$-set $\implies \exists$ a self-similar $(\ell, k, d)$-set? 
That is, assume that there exists a compact set $K\subseteq [0,1]^d$ such that $K$ intersects each $k$-dimensional affine subspace that meets $[0,1]^d$, but the $\ell$-fold sumset $\ell K$ is nowhere dense, does this imply that there exists such a set $K$ that is also self-similar?
\end{question}

\subsection*{Acknowledgments}
We are indebted to Imre~Z.~Ruzsa for an illuminating conversation.

\end{document}